\newcommand{\me}{\mbox{\rm\tiny 1/2}}
\newcommand{\ha}{\mbox{$\cal H$}}
\newcommand{\D}{\protect\displaystyle}
\newcommand{\T}{\protect\textstyle}
\newcommand{\ipl}{\langle}
\newcommand{\ipr}{\rangle}
\newcommand{\nuA} {{\nu_A}}
\newcommand{\eps}{\varepsilon}
\newcommand{\vphi}{\varphi}
\newcommand{\lbd}{\lambda}
\newtheorem{theorem}{Theorem}
\newtheorem{lemma}[theorem]{Lemma}
\newtheorem{corollary}[theorem]{Corollary}
\newtheorem{defin}[theorem]{Definition}
\newtheorem{remark}[theorem]{Remark}
\newtheorem{applemma}{Lemma}[section]
\def\N{{\mathord{\rm I\mkern-3.6mu N}}}
\def\R{{\mathord{\rm I\mkern-3.6mu R}}}
\def\Z{{\mathord{\rm Z\mkern-5.6mu Z}}}
\begin{document}
\setcounter{footnote}{1}
\title{A MANN ITERATIVE REGULARIZATION METHOD FOR ELLIPTIC CAUCHY PROBLEMS}

\author{H.W.\,Engl\, and\, A.\,Leit\~ao%
\footnote{On leave from Department of Mathematics, Federal University of 
Santa Catarina, P.O. Box 476, 88010-970 Florian\'opolis, Brazil}
\footnote{Supported by CNPq under grant 200049-94.1 and FWF project 
F--1308 within Spezialforschungsbereich 13} \\
Institut f\"ur Industriemathematik, \\ Johannes Kepler Universit\"at,
A--4040 Linz, Austria}

\date{}
\maketitle

\begin{abstract}
We investigate the Cauchy problem for linear elliptic operators with 
$C^\infty$--coefficients at a regular set $\Omega \subset \R^2$, which is a 
classical example of an ill-posed problem. The Cauchy data are given at the 
manifold $\Gamma \subset \partial\Omega$ and our goal is to reconstruct the 
trace of the $H^1(\Omega)$ solution of an elliptic equation at $\partial 
\Omega / \Gamma$.
The method proposed here composes the segmenting Mann iteration with a fixed 
point equation associated with the elliptic Cauchy problem. Our algorithm 
generalizes the iterative method developed by Maz'ya et al., who proposed a 
method based on solving successive well-posed mixed boundary value problems. 
We analyze the regularizing and convergence properties both theoretically 
and numerically.
\end{abstract}

\setcounter{footnote}{0}
\pagestyle{plain}
%
%
%
\section{Introduction} \label{sec:introd}

The solution of a linear elliptic Cauchy problem is written as the solution 
of a fixed point equation for an affine operator $T$ (see 
Section~\ref{sec:elcp_fpgl}). This fixed point equation is obtained as a 
byproduct in \cite{Le}.

In Section~\ref{ssec:mazya} we discuss the functional analytical formulation 
in \cite{Le} of an iterative method for solving elliptic Cauchy problems, 
which was originally proposed by Maz'ya et al. in \cite{KMF}.

The original formulation of the Mann iteration (see \cite{Ma}) is discussed 
in Section~\ref{ssec:mann}. We also analyze a variant introduced by 
Groetsch, called segmenting Mann iteration (see \cite{Gr}, \cite{EnSc}). 
These iterative methods approximate the solution of fixed point equations. 
The Groetsch variation is particularly interesting, when the fixed point 
equation is described by a non-expansive operator.

In Section~\ref{sec:mann_cp} we formulate the segmenting Mann iteration for 
the fixed point equation of Section~\ref{sec:elcp_fpgl}, associated 
with the solution of an elliptic Cauchy problem. A convergence proof for 
this new method is given in Section~\ref{ssec:converg}. Basically we choose 
a special topology for the space of boundary data, which allow us to prove 
uniform convexity and to verify some spectral properties of $T$. This 
properties are the cornerstone of the proof. A regularization property 
of the method is proved in Section~\ref{ssec:regul}. Then we provide 
convergence rates for this regularization method and report about its 
numerical performance in Section~\ref{sec:munerics}.
%
%
%
\section{Elliptic Cauchy problems and fixed point equations}
\label{sec:elcp_fpgl}

Let $\Omega \subset \R^2$ be an open bounded set and $\Gamma \subset \partial 
\Omega$ a given manifold. We denote by $P$ a second order elliptic operator 
defined in $\Omega$. We denote by {\em elliptic Cauchy problem} the following 
(time independent) initial value problem for the operator $P$ \\[2ex]
$ (CP) \hskip2cm
  \left\{ \begin{array}{rl}
    P u   = 0\, ,&\!\!\! \mbox{in } \Omega \\
    u     = f\, ,&\!\!\! \mbox{at } \Gamma \\
    u_\nu = g\, ,&\!\!\! \mbox{at } \Gamma
  \end{array} \right. $ \\[2ex]
where the given functions $f, g: \Gamma \to \R$ are called {\em Cauchy data}.

The problem we want to solve is to evaluate the trace of the solution of 
such an initial value problem at the part of the boundary where no data is 
prescribed, i.e. at $\partial\Omega \backslash \Gamma$. As a solution of 
the Cauchy problem (CP) we consider a $H^1(\Omega)$--distribution, which 
solves the weak formulation of the elliptic equation in $\Omega$ and also 
satisfies the Cauchy data at $\Gamma$ in the sense of the trace operator.

It is well known that elliptic Cauchy problems are not well posed in the 
sense of Hadamard.%
\footnote{For a formal definition of {\em well posed} problems, see e.g. 
\cite{Ba} or \cite{EHN}.}
A famous example given by Hadamard himself (see \cite{Had}) shows that we 
cannot have continuous dependence of the data. Also existence of solutions 
for arbitrary Cauchy data $(f,g)$ can not be assured,%
\footnote{The Cauchy data $(f,g)$ is called {\em consistent} if the 
corresponding problem (CP) has a solution. Otherwise $(f,g)$ is called 
{\em inconsistent} Cauchy data.}
as shows a simple argumentation with the Schwartz reflection principle 
(see \cite{GiTr}). However, extending the Cauchy--Kowalewsky and Holmgren 
Theorems to the $H^1$--context, it is possible to prove uniqueness of 
solutions in weak sense (see \cite{DaLi}, \cite{Is}).

Our next step is to characterize the solution of (CP) as solution of a 
fixed point equation. We define $\Gamma_1 := \Gamma$, $\Gamma_2 := \partial 
\Omega \backslash \Gamma$, such that $\Gamma_1\cap\Gamma_2 = \emptyset$ and 
$\overline{\Gamma_1\cup\Gamma_2} = \partial\Omega$. Further, we make the 
following assumption on the second order elliptic differential operator $P$
\begin{equation} \label{P-definition}
     P(u) \, := \, - \T\sum\limits_{i,j=1}^{2} D_i (a_{i,j} D_j u)\, ,
\end{equation}
where the operator coefficients $a_{i,j}$ satisfy
\begin{itemize}
\item $a_{i,j} \in L_\infty(\Omega)$;
\item the matrix \,$A(x) := (a_{i,j})_{i,j=1}^2$ \,satisfies: 
    \,$\xi^t A(x)\, \xi > \alpha \|\xi\|^2$, \ a.e. \,$x \in \Omega$, 
    \,$\forall \xi \in\R^2$, \,where $\alpha>0$ is given (independent of $x$).
\end{itemize}
Given the Cauchy data $(f,g) \in H^{\me}(\Gamma_1) \times H^{\me}_{00} 
(\Gamma_1)'$, we assume that there exists a $H^1$--solution of the problem%
\footnote{For details on the definition of the Sobolev spaces see \cite{Ad} 
or \cite{DaLi}.}
$$  P u = 0 \ \mbox{ in } \Omega  \, ,\ \ \ \ 
      u = f \ \mbox{ at } \Gamma_1\, ,\ \ \ \ 
 u_\nuA = g \ \mbox{ at } \Gamma_1 . $$
We are mainly interested in the determination of the Neumann trace 
$\bar{\vphi} := u_{\nuA|_{\Gamma_2}} \in H^{\me}_{00}(\Gamma_2)'$. Notice 
that, once $\bar{\vphi}$ is known, the solution of (CP) can be determined 
as the solution of the well posed mixed boundary value problem
$$  P u = 0           \ \mbox{ in } \Omega  \, ,\ \ \ \ 
      u = f           \ \mbox{ at } \Gamma_1\, ,\ \ \ \ 
 u_\nuA = \bar{\vphi} \ \mbox{ at } \Gamma_2 . $$
Next we define the operators $L_n: H^{\me}_{00}(\Gamma_2)' \to H^{\me} 
(\Gamma_2)$, $L_d: H^{\me}(\Gamma_2) \to H^{\me}_{00}(\Gamma_2)'$ by 
$L_n(\vphi) := w_{|_{\Gamma_2}}$, $L_d(\psi)  := v_{\nuA|_{\Gamma_2}}$, 
where $w, v \in H^1(\Omega)$ solve
$$  P w = 0     \ \mbox{ in } \Omega  \, ,\ \ \ \ 
      w = f     \ \mbox{ at } \Gamma_1\, ,\ \ \ \ 
 w_\nuA = \vphi \ \mbox{ at } \Gamma_2  $$
and
$$  P v = 0    \ \mbox{ in } \Omega  \, ,\ \ \ \ 
 v_\nuA = g    \ \mbox{ at } \Gamma_1\, ,\ \ \ \ 
      v = \psi \ \mbox{ at } \Gamma_2  $$
respectively. Now, defining the operator 
\begin{equation} \label{T_def}
T: H^{\me}_{00}(\Gamma_2)' \, \ni \, \vphi \, \longmapsto \,
   L_d(L_n(\vphi)) \, \in \, H^{\me}_{00}(\Gamma_2)'
\end{equation}
and observing that
$$ L_n(\bar{\vphi}) = u_{|_{\Gamma_2}}\, ,\ \ \ \ 
   L_d(u_{|_{\Gamma_2}}) = \bar{\vphi}\, , $$
we obtain the desired characterization $T(\bar{\vphi}) = \bar{\vphi}$.

Note that $T$ is an affine operator, since $L_n$ and $L_d$ are affine as 
well. The linear part of $T$ is denoted by $T_l \in {\cal L}(H^{\me}_{00} 
(\Gamma_2)')$. The affine term of the operator $T$ depends on the Cauchy 
data $(f,g)$ and is denoted by $z_{f,g} \in H^{\me}_{00}(\Gamma_2)'$. Than 
we conclude that the solution $\bar{\vphi}$ of (CP) is also a solution of 
the fixed point equation
\begin{equation} \label{fix_pkt_gl}
T \, \vphi \ (= \, T_l \, \vphi \, + \, z_{f,g}) \ = \ \vphi .
\end{equation}
The converse is also true, i.e. if $\bar{\vphi}$ is a solution of 
\eqref{fix_pkt_gl}, one concludes from the uniqueness of solution of (CP) 
that $\bar{\vphi}$ must be equivalent to $u_{|_{\Gamma_2}}$.
%
%
%
\section{Iterative methods} \label{sec:iter}

\subsection{The Maz'ya iteration} \label{ssec:mazya}

In this section we discuss the functional analytical formulation in \cite{Le} 
of an iterative method originally proposed by Maz'ya et al. (see \cite{KMF}). 
Defining $\ha := H^{\me}_{00}(\Gamma_2)'$ and following the notation of 
Section~\ref{sec:elcp_fpgl}, the {\em Maz'ya iteration} can be written in 
the form of the algorithm:
\begin{itemize}
\item[1.] Choose $\vphi_1 \in \ha$;
\item[2.] For $k = 1, 2, \ldots$ do \\
      \mbox{\ \ } $\psi_k := L_n(\vphi_k)$; \\
      \mbox{\ \ } $\vphi_{k+1} := L_d(\psi_k)$;
\end{itemize}
Notice that this is equivalent to set $\vphi_{k+1} := T \vphi_k$, $k = 0, 1, 
\dots$, which is exactly the Picard successive approximation for equation 
\eqref{fix_pkt_gl}. In this particular case we have
\begin{equation}
\vphi_{k+1} \ = \ T^k(\vphi_1) \ = \ 
T_l^k(\vphi_1) \, + \, \T\sum\limits_{j=0}^{k-1} {T_l^j(z_{f,g})} .
\end{equation}
The choice of a special topology for the Hilbert space $\ha$ allows one to 
verify the {\em non-expansivity} and {\em asymptotic regularity} of the 
operator $T_l$.%
\footnote{The corresponding definitions can be found in \cite{BrPe}.}
This are the key properties used in \cite{Le} to prove the strong convergence 
of the sequence $\{ \vphi_k \}$ to the solution $\bar{\vphi}$ of problem (CP). 
The convergence of the Maz'ya iteration follows basically from

\begin{lemma} \label{lem:je}

Let $H$ be a Hilbert space and $A: H \to H$ a linear non-expansive regular 
asymptotic operator. Given $x \in H$, the sequence $\{ A^k x \}$ converges 
to the orthogonal projection of $x$ onto Ker$(I-A)$.%
\footnote{See \cite{Je} for a complete proof.}
\end{lemma}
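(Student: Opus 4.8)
The plan is to reduce the convergence statement to the orthogonal decomposition of $H$ induced by the fixed point subspace $N := \mathrm{Ker}(I-A)$. First I would record that, since $A$ is linear, $N$ is a closed subspace, so the orthogonal projection $P$ onto $N$ is well defined and the target of the limit makes sense. The whole argument rests on the operator identity $\mathrm{Ker}(I-A) = \mathrm{Ker}(I-A^*)$, which I regard as the heart of the proof. To obtain it, suppose $Ax = x$; then non-expansivity gives $\|A^* x\| \le \|x\|$, and together with $\ipl A^* x, x \ipr = \ipl x, Ax \ipr = \|x\|^2$ one computes
\[
\|A^* x - x\|^2 = \|A^* x\|^2 - 2\,\mathrm{Re}\,\ipl A^* x, x \ipr + \|x\|^2 \le 0 ,
\]
so $A^* x = x$; the reverse inclusion follows by symmetry.

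From this identity and the general relation $\overline{\mathrm{Ran}(I-A)}^{\perp} = \mathrm{Ker}(I-A^*)$ I would deduce the orthogonal splitting
\[
H = N \oplus \overline{\mathrm{Ran}(I-A)}, \qquad N^{\perp} = \overline{\mathrm{Ran}(I-A)} .
\]
Writing $x = Px + (x - Px)$ with $Px \in N$ and $x - Px \in N^{\perp}$, the two summands are handled separately. On $N$ the iterates are stationary, $A^k(Px) = Px$ for every $k$, so it only remains to show $A^k y \to 0$ for $y \in N^{\perp}$.

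For $y$ in the subspace $\mathrm{Ran}(I-A)$, which is dense in $N^{\perp}$, say $y = (I-A)z$, I would use asymptotic regularity directly: since $A^k y = A^k z - A^{k+1} z$, the defining property $\|A^{k+1}z - A^k z\| \to 0$ yields $A^k y \to 0$. The passage to a general $y \in N^{\perp} = \overline{\mathrm{Ran}(I-A)}$ is then the routine density-plus-uniform-boundedness step: given $\eps > 0$, pick $y' \in \mathrm{Ran}(I-A)$ with $\|y - y'\| < \eps$ and estimate $\|A^k y\| \le \|A^k(y-y')\| + \|A^k y'\| \le \|y-y'\| + \|A^k y'\|$, using $\|A^k\| \le 1$; the first term stays below $\eps$ and the second tends to $0$.

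Combining the two parts gives $A^k x = Px + A^k(x-Px) \to Px$, which is exactly the orthogonal projection of $x$ onto $\mathrm{Ker}(I-A)$. The step I expect to be the genuine obstacle is the operator identity $\mathrm{Ker}(I-A) = \mathrm{Ker}(I-A^*)$: it is what forces the limiting projection to be orthogonal rather than an arbitrary (possibly oblique) idempotent, and it is the only place where non-expansivity enters in an essential, non-routine way.
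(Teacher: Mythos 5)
Your proof is correct, but there is a point of comparison you could not have known: the paper does not prove Lemma~\ref{lem:je} at all --- it is stated as a known result, with a footnote deferring the proof to Jeggle's book \cite{Je}. Your argument is the standard self-contained proof of this fact, and every step checks out. The computation $\| A^* x - x \|^2 = \|A^* x\|^2 - 2\,\mathrm{Re}\,\ipl A^* x , x \ipr + \|x\|^2 \le \|x\|^2 - 2\|x\|^2 + \|x\|^2 = 0$ is valid because $\|A^*\| = \|A\| \le 1$ and $\ipl A^* x, x \ipr = \ipl x , A x \ipr = \|x\|^2$ for a fixed point $x$; this gives $\mathrm{Ker}(I-A) \subset \mathrm{Ker}(I-A^*)$, and symmetry (applying the same argument to $A^*$) gives equality. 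Combined with $\overline{\mathrm{Ran}(I-A)}^{\perp} = \mathrm{Ker}(I-A^*)$ this yields the orthogonal splitting $H = \mathrm{Ker}(I-A) \oplus \overline{\mathrm{Ran}(I-A)}$, after which stationarity on the kernel, the telescoping identity $A^k (I-A) z = A^k z - A^{k+1} z \to 0$ from asymptotic regularity, and the density-plus-$\|A^k\| \le 1$ approximation to pass from $\mathrm{Ran}(I-A)$ to its closure are all sound. Your diagnosis of the essential step is also accurate: $\mathrm{Ker}(I-A) = \mathrm{Ker}(I-A^*)$ is exactly what forces the limit projection to be orthogonal rather than oblique, while asymptotic regularity is what makes the iterates converge at all --- non-expansivity alone is not enough, as the example $A = -I$ shows (non-expansive, $\mathrm{Ker}(I-A) = \{0\}$, yet $A^k x$ oscillates). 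In short: the proposal fills in, correctly and by the expected route, a proof the paper chose to outsource.
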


The interpretation of the Cauchy problem's solution as a solution of a 
fixed point equation is already suggested in \cite{KMF}. In this paper, 
Maz'ya et al. based their argumentation on some monotonicity results and 
elliptic theory to prove the convergence of the iterative method.

\subsection{The Mann iteration} \label{ssec:mann}

In this section we analyze the iterative method introduced by Mann (see 
\cite{Ma}) to determine solutions of fixed point equations. Let $X$ be a 
Banach space and $E \subset X$ a convex compact subset. Given a continuous 
operator $T: E \to E$, we know from Schauder's fixed point theorem that $T$ 
has at least one fixed point in $E$. The considered task is that of 
constructing in $E$ a sequence that converges to a fixed point of $T$.

The starting point for the development of the Mann iteration is the ordinary 
iteration process $x_{k+1} := T(x_k)$, with $x_1 \in E$ arbitrarily chosen.%
\footnote{It is obvious that this Picard iteration may fail to converge in 
this general framework.}
Let us introduce the infinite triangular matrix
$$ A \, = \, \left( \begin{array}{cccccc}
     1      & 0      & 0      & \cdots & 0      & \cdots \\
     a_{21} & a_{22} & 0      & \cdots & 0      & \cdots \\
     a_{31} & a_{32} & a_{33} & \cdots & 0      & \cdots \\
     \vdots & \vdots & \vdots & \ddots & \vdots & \vdots
   \end{array} \right) , $$
where the coefficients $a_{ij}$ satisfy
$$ \mbox{\it i)} \ a_{ij} \, \ge \, 0\, ,\ i,j = 1, 2, \ldots\ \ \ \ \ \ 
  \mbox{\it ii)} \ a_{ij} \, = \, 0\, ,\ j > i\ \ \ \ \ \ 
 \mbox{\it iii)} \ \T\sum\limits_{j=1}^i a_{ij} \, = \, 1\, , \ 
                 i = 1, 2, \ldots $$
The {\em Mann iteration} is defined by
\begin{itemize}
\item[1.] Choose $x_1 \in E$;
\item[2.] For $k = 1, 2, \ldots$ do \\
     \mbox{\ \ } $v_k := \sum_{j=1}^k \, a_{kj} \, x_j$; \\[1ex]
     \mbox{\ \ } $x_{k+1} := T(v_k)$;
\end{itemize}
This method is denoted briefly by $M(x_1, A, T)$. It can be regarded as a 
generalization of the ordinary iteration process, since this corresponds 
to the special choice $A = I$ (the identity matrix). In the early work 
\cite{Ma}, Mann proves the following result:

\begin{lemma} \label{lemma:mann}

If either of the sequences $\{ x_k \}$, $\{ v_k \}$ converges, than the 
other also converges to the same limit, and this common limit is a fixed 
point of $T$.
\end{lemma}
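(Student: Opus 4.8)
The plan is to reduce the statement to two elementary facts and then combine them. \textbf{Fact A} (averaging/regularity): if $x_k \to L$ in $X$, then the averaged sequence $v_k = \sum_{j=1}^k a_{kj} x_j$ also converges to $L$. \textbf{Fact B} (continuity transfer): if $v_k \to M$, then since $x_{k+1} = T(v_k)$ and $T$ is continuous, $x_{k+1} \to T(M)$, so $\{x_k\}$ converges to $T(M)$. Fact B is immediate; the whole content of the lemma is concentrated in Fact A.

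To prove Fact A, I would use the row-sum normalization $\sum_{j=1}^k a_{kj} = 1$ to write $v_k - L = \sum_{j=1}^k a_{kj}(x_j - L)$, whence $\|v_k - L\| \le \sum_{j=1}^k a_{kj}\|x_j - L\|$ by nonnegativity of the coefficients. Fix $\eps > 0$ and choose $N$ so that $\|x_j - L\| < \eps/2$ for $j > N$. Splitting the sum at $N$, the tail $\sum_{j=N+1}^k a_{kj}\|x_j - L\|$ is bounded by $(\eps/2)\sum_{j=N+1}^k a_{kj} \le \eps/2$, again using nonnegativity and row-sums $\le 1$. For the head $\sum_{j=1}^N a_{kj}\|x_j - L\|$, compactness of $E$ furnishes a uniform bound $\|x_j - L\| \le M$, so the head is at most $M\sum_{j=1}^N a_{kj}$; this tends to $0$ as $k\to\infty$ because each column of $A$ is null, i.e. $a_{kj}\to 0$ for every fixed $j$ (the regularity of the summability matrix). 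Hence $\|v_k - L\| < \eps$ for $k$ large, proving $v_k \to L$.

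With Facts A and B in hand, the two implications follow. Suppose first that $\{x_k\}$ converges, say $x_k \to x^*$ (with $x^* \in E$, since $E$ is closed). By Fact A, $v_k \to x^*$, so $\{v_k\}$ converges to the same limit, and applying continuity of $T$ to $x_{k+1} = T(v_k)$ gives $x^* = \lim_k x_{k+1} = T(\lim_k v_k) = T(x^*)$, so $x^*$ is a fixed point. Conversely, suppose $\{v_k\}$ converges, $v_k \to v^*$. By Fact B, $x_k \to T(v^*) =: x^*$. Now Fact A applied to $\{x_k\}$ yields $v_k \to x^*$; comparing with $v_k \to v^*$ and using uniqueness of limits forces $v^* = x^*$, and therefore $x^* = T(v^*) = T(x^*)$ is again a fixed point, with both sequences sharing the limit $x^* = v^*$.

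The main obstacle is Fact A: it is precisely here that a property of $A$ beyond (i)--(iii) is needed, namely the column-null (regularity) condition $a_{kj}\to 0$, without which the averaged sequence may converge to a limit different from that of $\{x_k\}$. Once this regularity is granted, compactness of $E$ supplies the uniform bound controlling the finitely many head terms, and the remainder is the standard Toeplitz estimate together with a single appeal to the continuity of $T$.
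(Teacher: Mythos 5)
The paper itself gives no proof of this lemma: it is quoted from Mann's article \cite{Ma}, so the only benchmark is Mann's original argument, and your proof is, in substance, exactly that argument --- the Toeplitz head/tail estimate transfers convergence of $\{x_k\}$ to $\{v_k\}$ (Fact A), continuity of $T$ transfers convergence of $\{v_k\}$ to $\{x_{k+1}\}$ (Fact B), and the two are combined in the obvious way. Your proof is correct, and your closing observation is the essential point rather than a side remark: conditions {\it i)}--{\it iii)} alone do not suffice. Indeed, the choice $a_{k1}=1$ for all $k$ satisfies {\it i)}--{\it iii)}, yet then $v_k \equiv x_1$ and $x_k = T(x_1)$ for all $k \ge 2$, so both sequences converge but to different limits unless $x_1$ happens to be a fixed point; thus the lemma as printed is literally false without the column-null condition $\lim_{k\to\infty} a_{kj} = 0$ for each fixed $j$. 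That condition is part of Mann's original hypotheses and is what the paper alludes to when it mentions ``the regularity of the matrix $A$'' immediately after the lemma, so you were right to import it. One minor simplification: compactness of $E$ is not needed for the uniform bound on the head terms in Fact A, since a convergent sequence is automatically bounded --- though invoking it is of course harmless.
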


Further properties of the sets of limit points of $\{ x_k \}$ and $\{ v_k \}$ 
are also proved, under additional requirements on the coefficients $a_{ij}$.

The proof in \cite{Ma} can be extended to a locally convex Hausdorff vector 
space $X$ and $E \subset X$ a convex closed subset, by using the regularity 
of the matrix $A$. A sketch of the proof is given in \cite{Do}, which also 
analyzes the Mann iteration for {\em quasi non-expansive} operators.

In the article \cite{Gr}, Groetsch considers a variant of the iteration 
defined above. The Banach space $X$ is assumed to be {\em uniformly convex} 
and $E \subset X$ convex only. Additionally to \,{\it i)}, \,{\it ii)} \,and 
\,{\it iii)}, the further assumption
\begin{itemize}
\item[\it iv)] $a_{i+1,j} \, = \, (1 - a_{i+1,i+1}) \, a_{ij}$, \ $j \le i$
\end{itemize}
is made, in which case the matrix $A$ is said to be {\em segmenting}.%
\footnote{Due tue the geometrical interpretation, we adopt the notation 
used in \cite{Gr} and \cite{EnSc}. In \cite{Do}, matrices satisfying 
{\it iv)} are called {\em normal} matrices.}
The corresponding iterative method is called {\em Mann segmenting iteration}. 
As one can easily check, $v_{k+1}$ can be written in this case as the convex 
linear combination
\begin{equation} \label{eq:segment}
v_{k+1} \ = \ (1 - d_k) v_k \, + \, d_k T(v_k) ,
\end{equation}
where $d_k := a_{k+1,k+1}$, i.e. $v_{k+1}$ lies on the line segment joining 
$v_k$ and $x_{k+1} = T(v_k)$. Notice that the choice of the diagonal elements 
$d_k$ determines completely the segmenting matrix $A$. Next we enunciate the 
main theorem in \cite{Gr}:

\begin{lemma} \label{lemma:groe}

Let $T$ be a non-expansive operator with at least one fixed point in $E$. 
If $\sum_{k=1}^\infty \, d_k (1-d_k)$ diverges, then the sequence 
$\{ (I-T) v_k \}$ converges strongly to zero, for every $x_1 \in E$.
\end{lemma}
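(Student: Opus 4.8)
The plan is to combine the uniform convexity of $X$ with two monotonicity observations about the iterates. Fix a fixed point $w$ of $T$ in $E$, which exists by hypothesis, and abbreviate $r_k := \|v_k - w\|$ and $u_k := (I-T)v_k$. I would first record from \eqref{eq:segment} the rewriting $v_{k+1} = v_k - d_k u_k$, which will be used throughout, where $d_k = a_{k+1,k+1} \in [0,1]$ so that $d_k(1-d_k) \ge 0$.

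The first step is to show that $\{r_k\}$ is non-increasing: writing $v_{k+1}-w = (1-d_k)(v_k-w) + d_k(Tv_k - w)$ as a convex combination and using the non-expansivity estimate $\|Tv_k - w\| = \|Tv_k - Tw\| \le r_k$, the triangle inequality yields $r_{k+1} \le r_k$, so $r_k$ decreases to some $r \ge 0$. The second step is the analogous fact for the residuals: from $v_{k+1}=v_k-d_k u_k$ one computes $u_{k+1} = (1-d_k)u_k - (Tv_{k+1}-Tv_k)$, and since $\|Tv_{k+1}-Tv_k\| \le \|v_{k+1}-v_k\| = d_k\|u_k\|$, it follows that $\|u_{k+1}\| \le \|u_k\|$. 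Hence $\|u_k\|$ decreases to some $\ell \ge 0$, and the whole statement reduces to proving $\ell = 0$.

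The crux is to quantify strict convexity. Let $\delta$ be the modulus of convexity of $X$, so that uniform convexity means $\delta(\eps)>0$ for $\eps>0$. I would establish the elementary bound, valid for $\|a\|,\|b\| \le 1$ and $t \in [0,1]$,
$$\|(1-t)a + tb\| \ \le \ 1 - 2\,t(1-t)\,\delta(\|a-b\|),$$
by splitting off a midpoint term (for $t \le \tfrac12$, write $(1-t)a+tb$ as a convex combination of $a$ and $\tfrac{a+b}{2}$) and invoking the definition of $\delta$. If $r=0$ then $\|u_k\| \le \|v_k-w\|+\|Tv_k-w\| \le 2r_k \to 0$ and the claim is immediate, so assume $r>0$, whence $r_k > 0$ for all $k$. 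Normalizing $v_k-w$ and $Tv_k-w$ by $r_k$ (both resulting vectors have norm at most $1$) and applying the bound with $t=d_k$ gives $r_{k+1} \le r_k - 2\,r_k\,d_k(1-d_k)\,\delta(\|u_k\|/r_k)$, since $\|x_k-y_k\| = \|u_k\|/r_k$ after normalization. Summing telescopes the left-hand side against $r_1 - r$, so $\sum_k r_k\,d_k(1-d_k)\,\delta(\|u_k\|/r_k) < \infty$.

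To finish, suppose $\ell>0$. Then $\|u_k\| \ge \ell$ and $r_k \le r_1$, so $\|u_k\|/r_k \ge \ell/r_1 \in (0,2]$, and monotonicity of $\delta$ gives $\delta(\|u_k\|/r_k) \ge \delta(\ell/r_1) =: c > 0$; together with $r_k \ge r$ this makes each summand at least $rc\,d_k(1-d_k)$. Since $\sum_k d_k(1-d_k)$ diverges by hypothesis, the series above would diverge, a contradiction; hence $\ell=0$, i.e. $\|(I-T)v_k\| \to 0$. The main obstacle is the crux step: extracting the quantitative, weight-dependent contraction from mere strict convexity and choosing the normalization by $r_k$ so that the factor $d_k(1-d_k)$ emerges in a form matchable against the divergent series. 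It is precisely here that \emph{uniform} convexity, rather than strict convexity or a general Banach structure, is indispensable, and the two monotonicity facts are what allow the resulting control to be upgraded to convergence of the full sequence.
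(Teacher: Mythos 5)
Your proof is correct, and it is essentially the argument behind the paper's own treatment: the paper cites \cite{Gr} for this lemma, and its proof of the analogous Theorem~\ref{satz:conv_MM} (via Lemma~\ref{lemma:conv}) rests on exactly your three ingredients --- monotonicity of the distances $\|v_k-w\|$, monotonicity of the residuals $\|(I-T)v_k\|$, and a uniform-convexity estimate yielding a per-step decay factor of the form $1 - c\,d_k(1-d_k)$, which is then played off against the divergence of $\sum_k d_k(1-d_k)$. The only differences are organizational: you make the modulus of convexity $\delta$ and the normalization by $r_k$ explicit (which keeps the argument valid in a general uniformly convex Banach space, matching the lemma's stated generality, whereas the paper works in its Hilbert space with an abstract constant $\tilde{c}$), and you first telescope to a summability statement and only then argue by contradiction, while the paper assumes $\|(I-T)\phi_k\|\ge\eps$ from the outset and iterates the contraction inequality.
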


Notice that, differently from Lemma~\ref{lemma:mann}, compactness of $E$ is 
not required in Lemma~\ref{lemma:groe}. This is the reason, why the existence 
of fixed points of $T$ must be assumed in the last lemma.

In order to prove that $M(x_1,A,T)$ converges strongly to a fixed point of 
$T$ for every $x_1 \in E$, one needs stronger assumptions such as: \\[2ex]
\begin{tabular}{cl}
$\bullet$ &\!\!\!\! $E$ is closed convex; $T(E)$ is relatively compact in 
           $X$. \\
$\bullet$ &\!\!\!\! $E$ is closed convex; $I-T$ maps bounded closed subsets 
           of $E$ into closed \\ &\!\!\!\! subsets of $E$. \\
$\bullet$ &\!\!\!\! $E$ is closed convex; $T$ is {\em demicompact} in the 
           sense of \cite{BrPe}.
\end{tabular} \\[2ex]
One should note that the last condition is a particular case of the second one.

In the particular case $X$ is an Euclidian space, $E \subset X$ is a convex 
compact subset, $T: E \to E$ is a non-expansive mapping with a unique fixed 
point $x \in E$, and $A$ is a segmenting matrix such that $\sum_{k=1}^\infty 
\, d_k (1-d_k)$ diverges, then $M(x_1,A,T)$ converges to $x$, for every $x_1 
\in E$.

It is worth mentioning that Lemma~\ref{lemma:groe} gives on $M(x_1,A,T)$ a 
condition analogous to the asymptotic regularity, which was used in 
Lemma~\ref{lem:je} to prove the convergence of the Maz'ya iteration.
%
%
%
\section{An iterative method for Cauchy problems} \label{sec:mann_cp}

In this section we introduce a segmenting Mann iteration for solving 
the elliptic Cauchy problem (CP). Let $T$ be the operator defined in 
\eqref{T_def} and $A$ a segmenting matrix. We assume the Cauchy data 
$(f,g)$ are consistent and denote by $\bar{\vphi}$ the solution of the 
fixed point equation \eqref{fix_pkt_gl}. The {\em Mann--Maz'ya iteration} 
is defined by:
\begin{itemize}
\item[1.] Choose $\vphi_1 \in \ha$;
\item[2.] For $k = 1, 2, \ldots$ do \\
      \mbox{\ \ } $\phi_k := \sum_{j=1}^k \, a_{kj} \, \vphi_j$; \\[1ex]
      \mbox{\ \ } $\vphi_{k+1} := T(\phi_k)$;
\end{itemize}
Note that $\vphi_k, \phi_k \in \ha$, $k = 1, 2, \ldots$. We represent this 
iterative process by $(\vphi_1, A, T)$. Obviously it coincides with the 
Maz'ya iteration if one chooses $A = I$.

Defining the iteration errors $\eps_k := \vphi_k - \bar{\vphi}$ and 
$\gamma_k := \phi_k - \bar{\vphi}$, we obtain
$$  \eps_{k+1} \ = \ \vphi_{k+1} - \bar{\vphi}
    \ = \ T(\phi_k) - T(\bar{\vphi})
    \ = \ T_l(\phi_k - \bar{\vphi})
    \ = \ T_l(\gamma_k)\, , $$
$$ \gamma_k \ = \ \phi_k - \bar{\vphi} \T
   \ = \ \sum\limits_{j=1}^k \, a_{kj} \, \vphi_j - \bar{\vphi}
   \ = \ \sum\limits_{j=1}^k \, a_{kj} \, \vphi_j - 
         \sum\limits_{j=1}^k \, a_{kj} \, \bar{\vphi}
   \ = \ \sum\limits_{j=1}^k \, a_{kj} \, \eps_j . $$
It becomes clear, that the convergence of the iteration $(\vphi_1, A, T)$ to 
the fixed point $\bar{\vphi}$ is equivalent to the convergence (to zero) of 
the iteration $(\eps_1, A, T_l)$.

In order to analyze this method, we need to define a special topology for 
the space \ha.

\begin{defin} \label{def:norm}

Given $\vphi \in \ha$, let $W(\vphi) \in H^1(\Omega)$ be the solution of the 
mixed boundary value problem
$$  P w = 0     \ \mbox{ in } \Omega  \, ,\ \ \ \ 
      w = 0     \ \mbox{ at } \Gamma_1\, ,\ \ \ \ 
 w_\nuA = \vphi \ \mbox{ at } \Gamma_2 . $$
We define the functional\, $\| \cdot \|_* : \ha \to \R$\, by\, $\|\vphi\|_* 
:= \big(\int_\Omega |\nabla W(\vphi)|^2 dx\big)^{\me}$. 
\end{defin}

It is proved in \cite{Le} that the functional $\| \cdot \|_*$ defines a norm 
in \ha, which is equivalent to the usual Sobolev norm of this space. 
Actually, one can verify that the bilinear form
$$ \ipl \vphi , \psi \ipr_* \, := \,
   \int_\Omega \nabla W(\vphi) \ \nabla W(\psi) dx $$
defines an inner product in \ha. In the Hilbert space $(\ha; \ipl\cdot, 
\cdot\ipr_*)$ we are able to analyze the Mann--Maz'ya iteration 
$(\vphi_1, A, T)$.

\subsection{Convergence proof} \label{ssec:converg}

We start this section discussing an auxiliary convexity result, that is 
needed for the proof of the main theorem.%
\footnote{For simplicity we denote by $\| \cdot \|$ the norm of the 
Hilbert space \ha, but meant is the norm $\| \cdot \|_*$ introduced in 
Definition~\ref{def:norm}.}

\begin{lemma} \label{lemma:conv}

Let $\vphi_k$, $\phi_k$ be the sequences generated by the iteration 
$(\vphi_1, A, T)$ and $\bar{\vphi}$ the solution of the fixed point equation 
\eqref{fix_pkt_gl}. If for some $\eps >0$, $k_0 \in \N$ the inequality 
$\| T(\phi_k) - \vphi_k \| \ge \eps$, $\forall$ $k \ge k_0$ holds, then 
there exists $c > 0$ such that
\begin{equation} \label{lemma:eq1}
\| \phi_{k+1} - \bar{\vphi} \| \, \le \,
   \big( 1 - c\, d_k\, (1-d_k) \big)\,
   \| \phi_k - \bar{\vphi} \| \, ,\ k \ge k_0  .
\end{equation}
\end{lemma}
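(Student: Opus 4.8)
The plan is to exploit the segmenting structure to reduce \eqref{lemma:eq1} to a single contraction step for the linear part $T_l$, and then to use the Hilbert-space (uniform convexity) identity for convex combinations together with the non-expansivity of $T_l$. As recorded in \eqref{eq:segment}, the segmenting hypothesis \textit{iv)} gives $\phi_{k+1} = (1-d_k)\phi_k + d_k\,T(\phi_k)$. Subtracting the fixed point $\bar{\vphi} = T(\bar{\vphi})$ and using that $T$ is affine with linear part $T_l$, I would first note that
$$ \phi_{k+1} - \bar{\vphi} \ = \ (1-d_k)(\phi_k - \bar{\vphi}) + d_k\bigl(T(\phi_k) - T(\bar{\vphi})\bigr) \ = \ (1-d_k)(\phi_k - \bar{\vphi}) + d_k\,T_l(\phi_k - \bar{\vphi}), $$
so that $\phi_{k+1} - \bar{\vphi}$ is exactly the convex combination, with weights $1-d_k$ and $d_k$, of the two vectors $\phi_k - \bar{\vphi}$ and $T(\phi_k) - \bar{\vphi}$.

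Next I would apply the elementary Hilbert-space identity $\|(1-t)a + t b\|^2 = (1-t)\|a\|^2 + t\|b\|^2 - t(1-t)\|a-b\|^2$, valid in $(\ha,\ipl\cdot,\cdot\ipr_*)$ and encoding the quantitative uniform convexity we need, with $a := \phi_k - \bar{\vphi}$, $b := T(\phi_k) - \bar{\vphi}$ and $t := d_k$, observing that $a - b = \phi_k - T(\phi_k)$. This yields
$$ \|\phi_{k+1} - \bar{\vphi}\|^2 \ = \ (1-d_k)\|\phi_k - \bar{\vphi}\|^2 + d_k\|T(\phi_k) - \bar{\vphi}\|^2 - d_k(1-d_k)\,\|\phi_k - T(\phi_k)\|^2 . $$
Since $T_l$ is non-expansive in the $\|\cdot\|_*$--norm and $\bar{\vphi}$ is fixed, $\|T(\phi_k) - \bar{\vphi}\| = \|T_l(\phi_k - \bar{\vphi})\| \le \|\phi_k - \bar{\vphi}\|$, so the first two terms combine into $\|\phi_k - \bar{\vphi}\|^2$ and I obtain the additive decrease
$$ \|\phi_{k+1} - \bar{\vphi}\|^2 \ \le \ \|\phi_k - \bar{\vphi}\|^2 - d_k(1-d_k)\,\|\phi_k - T(\phi_k)\|^2 . $$

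It then remains to convert this additive decay into the multiplicative form of \eqref{lemma:eq1}. This is where the hypothesis enters: it furnishes, for $k \ge k_0$, the uniform lower bound $\eps$ on the residual term $\|\phi_k - T(\phi_k)\| = \|a-b\|$ appearing above. Simultaneously, the additive inequality already shows that $\|\phi_k - \bar{\vphi}\|$ is non-increasing, hence $\|\phi_k - \bar{\vphi}\| \le \|\phi_1 - \bar{\vphi}\| =: M$ for all $k$. Replacing $\eps^2$ by $(\eps^2/M^2)\,\|\phi_k - \bar{\vphi}\|^2$ gives $\|\phi_{k+1} - \bar{\vphi}\|^2 \le \bigl(1 - \tilde c\,d_k(1-d_k)\bigr)\|\phi_k - \bar{\vphi}\|^2$ with $\tilde c := \eps^2/M^2$; taking square roots and using $\sqrt{1-x} \le 1 - x/2$ for $x \in [0,1]$ delivers \eqref{lemma:eq1} with $c := \tilde c/2 > 0$. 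One checks the admissibility $\tilde c\,d_k(1-d_k) \le 1$ from $d_k(1-d_k) \le 1/4$ together with $\eps \le \|\phi_k - T(\phi_k)\| \le 2\|\phi_k - \bar{\vphi}\| \le 2M$.

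The main obstacle is conceptual rather than computational. Because the elliptic Cauchy problem is ill-posed, $I - T_l$ is injective (by uniqueness of the solution of (CP)) but not bounded below, so no uniform contraction factor is available on all of $\ha$; this is precisely why \eqref{lemma:eq1} cannot hold unconditionally and must be conditioned on the residual staying above $\eps$. The genuine content of the argument is thus the interplay of the hypothesis with the a priori bound $\|\phi_k - \bar{\vphi}\| \le M$, which together manufacture a geometric rate on the relevant range of iterates that the operator $T_l$ alone does not provide. I would also verify at the outset that the only structural facts used are the convex-combination identity and the non-expansivity of $T_l$, so that the equivalence of $\|\cdot\|_*$ with the usual Sobolev norm established in \cite{Le} is needed merely to guarantee that $(\ha,\ipl\cdot,\cdot\ipr_*)$ is a bona fide Hilbert space in which these identities are legitimate.
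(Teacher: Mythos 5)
Your proof is correct, and while it follows the paper's overall strategy --- exploiting uniform convexity of $(\ha,\|\cdot\|_*)$ on the segmenting convex combination $\phi_{k+1}-\bar{\vphi}=(1-d_k)(\phi_k-\bar{\vphi})+d_k\,T_l(\phi_k-\bar{\vphi})$, together with the non-expansivity of $T_l$ and the a priori bound $\|\phi_k-\bar{\vphi}\|\le\|\phi_1-\bar{\vphi}\|=:M$ --- its technical realization is genuinely different. The paper invokes the abstract modulus of convexity: it splits $\lbd x+(1-\lbd)y$ through the midpoint $\frac{1}{2}(x+y)$, bounds $\|\frac{1}{2}(x+y)\|\le(1-\tilde{c})\|\phi_k-\bar{\vphi}\|$ for vectors at distance $\ge\eps$, and lands directly on the multiplicative estimate \eqref{lemma:eq1} with an unspecified constant $\tilde{c}$; this is Groetsch's argument and would survive in any uniformly convex Banach space. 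You instead use the exact Hilbert-space identity $\|(1-t)a+tb\|^2=(1-t)\|a\|^2+t\|b\|^2-t(1-t)\|a-b\|^2$ to obtain an additive decrease of the squared error, and then convert it to multiplicative form via $\|\phi_k-\bar{\vphi}\|\le M$ and $\sqrt{1-x}\le 1-x/2$. Your route buys three things: a fully explicit constant $c=\eps^2/(2M^2)$, whose admissibility you rightly verify; validity for all $d_k\in[0,1]$ without case distinction (the paper's step $\|\lbd x+(1-\lbd)y\|\le 2\lbd\|\frac{1}{2}(x+y)\|+(1-2\lbd)\|y\|$ is a triangle inequality only for $\lbd\le 1/2$ and needs a symmetrized variant otherwise); and self-containedness, since the monotonicity of $\|\phi_k-\bar{\vphi}\|$ falls out of your identity rather than being imported, as in the paper, by a forward reference to \eqref{satz:eq3} in the proof of the subsequent theorem. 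What it costs is generality: your identity is genuinely Hilbertian, whereas the paper's argument extends beyond Hilbert spaces. Note finally that you, exactly like the paper's own proof, read the hypothesis $\|T(\phi_k)-\vphi_k\|\ge\eps$ as $\|T(\phi_k)-\phi_k\|\ge\eps$; this is evidently a typo in the statement, since the latter is what is established and used when the lemma is applied in Theorem~\ref{satz:conv_MM}.
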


\begin{proof}
Notice that $\| \phi_k - \bar{\vphi} \| \le \| \phi_1 - \bar{\vphi} \|$, 
for $k \ge 1$ (see \eqref{satz:eq3} below). Since the Hilbert space $\ha$ is 
uniformly convex, we obtain for every pair $x, y \in \ha$ with 
$\|x\|, \|y\| \le \| \phi_k - \bar{\vphi}\|$ and $\| x - y \| \ge \eps$ 
the inequality
\begin{eqnarray}
\| \lbd x + (1-\lbd)y \|
 & \le & 2\lbd\, \big\| \T\frac{1}{2} (x+y) \big\| + (1-2\lbd)\, \| y \|
         \nonumber \\
 & \le & 2\lbd\, (1-\tilde{c})\, \|\phi_k-\bar{\vphi}\| \, + \,
         (1-2\lbd)\, \|\phi_k-\bar{\vphi}\| \nonumber \\
 & \le & \|\phi_k-\bar{\vphi}\| \, \big[ 1 - 2 \lbd \tilde{c} (1-\lbd) \big]
         \, ,\ \lbd \in [0,1]\, , \label{lemma:eq2}
\end{eqnarray}
where the constant $0 < \tilde{c} < 1$ depends only on $\eps$ and 
$\| \phi_1 - \bar{\vphi} \|$. Further, it follows from \eqref{eq:segment}
\begin{equation} \label{lemma:eq3}
\phi_{k+1} - \bar{\vphi} \ = \
(1-d_k) (\phi_k - \bar{\vphi}) \, + \, d_k ( T(\phi_k) - T(\bar{\vphi}) ) .
\end{equation}
Now, from \eqref{lemma:eq3} and \eqref{lemma:eq2} with $x = T(\phi_k) - 
T(\bar{\vphi})$, $y = \phi_k - \bar{\vphi}$, $\lbd = d_k$, we obtain the 
inequality in \eqref{lemma:eq1} with $c = 2\tilde{c}$.
\end{proof}

Next we prove for the Mann--Maz'ya iteration, a result analogous to the one 
stated Lemma~\ref{lemma:groe}.

\begin{theorem} \label{satz:conv_MM}

Let $T$ be the operator defined in \eqref{T_def} and $A$ a segmenting 
matrix such that  $\sum_{k=1}^\infty d_k (1-d_k)$ diverges. The iteration 
$(\vphi_1, A, T)$ generates a sequence $\{ \phi_k \}$ such that 
$\{ (I-T) \phi_k \}$ converges strongly to zero, for every $\vphi_1 
\in \ha$.
\end{theorem}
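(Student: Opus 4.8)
The plan is to argue by contradiction, using Lemma~\ref{lemma:conv} as the engine and two monotonicity facts as the structural backbone; note that existence of the fixed point $\bar\vphi$ is guaranteed by the assumed consistency of the Cauchy data, so no fixed-point theorem is needed. Throughout I write $r_k := (I-T)\phi_k = \phi_k - T(\phi_k)$ for the residual and use that $T_l$ is nonexpansive on $(\ha,\|\cdot\|)$, a property verified for this topology in \cite{Le}.

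First I would record the two monotonicity estimates. Using \eqref{eq:segment} in the form \eqref{lemma:eq3}, namely $\phi_{k+1}-\bar\vphi = (1-d_k)(\phi_k-\bar\vphi)+d_k T_l(\phi_k-\bar\vphi)$, together with $\|T_l(\phi_k-\bar\vphi)\|\le\|\phi_k-\bar\vphi\|$, the triangle inequality applied to this convex combination yields

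\begin{equation} \label{satz:eq3}
\| \phi_{k+1} - \bar\vphi \| \ \le \ \| \phi_k - \bar\vphi \|\, ,\quad k \ge 1,
\end{equation}

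which is the bound invoked in the proof of Lemma~\ref{lemma:conv}. Next, from the affine identity $(I-T)\phi = (I-T_l)\phi - z_{f,g}$ I obtain $r_{k+1}-r_k = (I-T_l)(\phi_{k+1}-\phi_k)$, and since \eqref{eq:segment} gives $\phi_{k+1}-\phi_k = d_k(T(\phi_k)-\phi_k) = -d_k r_k$, the residual obeys the recursion $r_{k+1} = \big((1-d_k)I + d_k T_l\big) r_k$. As $(1-d_k)I + d_k T_l$ is a convex combination of the nonexpansive operators $I$ and $T_l$ (because $d_k\in[0,1]$), it is itself nonexpansive, whence $\|r_{k+1}\| \le \|r_k\|$; thus $\{\|r_k\|\}$ is non-increasing and converges to some $\rho \ge 0$.

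The conclusion then follows. Suppose $r_k \not\to 0$, so $\rho > 0$; by monotonicity $\|(I-T)\phi_k\| = \|r_k\| \ge \rho$ for \emph{every} $k$. Applying Lemma~\ref{lemma:conv} with $\eps = \rho$ and $k_0 = 1$ produces $c > 0$ such that iterating \eqref{lemma:eq1} gives $\|\phi_{k+1}-\bar\vphi\| \le \|\phi_1-\bar\vphi\| \prod_{j=1}^{k}\big(1 - c\,d_j(1-d_j)\big)$. Here each factor lies in $[\tfrac12,1]$, since $c = 2\tilde c < 2$ and $d_j(1-d_j) \le \tfrac14$, so the factors are positive and the elementary bound $1-t \le e^{-t}$ estimates the product by $\exp\!\big(-c\sum_{j=1}^{k} d_j(1-d_j)\big)$. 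Because $\sum_{k} d_k(1-d_k)$ diverges, this tends to $0$, forcing $\|\phi_k - \bar\vphi\| \to 0$. Continuity of the affine operator $T$ then gives $T(\phi_k) \to T(\bar\vphi) = \bar\vphi$, so $r_k = \phi_k - T(\phi_k) \to 0$, contradicting $\|r_k\| \ge \rho > 0$. Hence $\rho = 0$, i.e. $\{(I-T)\phi_k\}$ converges strongly to zero for every $\vphi_1\in\ha$.

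The step I expect to be the main obstacle is not the chain of inequalities but the logical packaging: Lemma~\ref{lemma:conv} only delivers a geometric contraction under a lower bound on the residual that holds for \emph{all} $k \ge k_0$, whereas a naive contradiction hypothesis supplies such a bound only along a subsequence. The monotonicity $\|r_{k+1}\| \le \|r_k\|$ is precisely what promotes a $\liminf$ statement into a uniform threshold, and isolating the residual recursion $r_{k+1} = \big((1-d_k)I + d_k T_l\big)r_k$ (rather than working only with $\|\phi_k-\bar\vphi\|$) is the decisive observation that makes the argument close.
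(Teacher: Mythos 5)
Your proof is correct and takes essentially the paper's route: the residual monotonicity $\|(I-T)\phi_{k+1}\|\le\|(I-T)\phi_k\|$ turns the contradiction hypothesis into a uniform lower bound on $\|(I-T)\phi_k\|$, after which Lemma~\ref{lemma:conv} and the divergence of $\sum_k d_k(1-d_k)$ close the argument. The only (harmless) differences are in execution: the paper derives the residual monotonicity from the triangle inequality and non-expansivity of $T_l$ rather than from your recursion $r_{k+1}=\big((1-d_k)I+d_k T_l\big)r_k$, and it finishes additively --- telescoping \eqref{lemma:eq1} against the lower bound $\|\phi_k-\bar{\vphi}\|\ge\eps/2$ obtained from $\|\phi_k-T(\phi_k)\|\le 2\|\phi_k-\bar{\vphi}\|$ --- whereas you iterate \eqref{lemma:eq1} multiplicatively, use $1-t\le e^{-t}$, and invoke continuity of $T$.
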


\begin{proof}
From the segmenting property \eqref{eq:segment}, follows
\begin{equation} \label{satz:eq1}
\| \phi_{k+1} - \phi_k \| \, = \, d_k \| T(\phi_k) - \phi_k \|  .
\end{equation}
From the non-expansivity of $T_l$ and \eqref{satz:eq1} we obtain
\begin{eqnarray}
\| T(\phi_{k+1}) - \phi_{k+1} \|
 & \le & \| T(\phi_{k+1}) - T(\phi_k) \| +
         \| T(\phi_k) - \phi_{k+1} \| \nonumber \\
 & \le & d_k \| T(\phi_k) - \phi_k \| + \nonumber
         \| T(\phi_k) - (1 - d_k) \phi_k - d_k T(\phi_k) \| \\
 &  =  & d_k \| T(\phi_k) - \phi_k \| +
         (1 - d_k) \| T(\phi_k) - \phi_k \|  \nonumber \\
 &  =  & \| T(\phi_k) - \phi_k \|  . \label{satz:eq2}
\end{eqnarray}
Denoting by $\bar{\vphi}$ the solution of the fixed point equation 
\eqref{fix_pkt_gl}, we estimate
\begin{eqnarray} \label{satz:eq3}
\| \phi_{k+1} - \bar{\vphi} \|
 &  =  & \| (1-d_k) \phi_k + d_k T(\phi_k) - (1-d_k) \bar{\vphi} -
         d_k \bar{\vphi} \| \nonumber \\
 &  =  & \| (1 - d_k) (\phi_k - \bar{\vphi}) +
         d_k ( T(\phi_k) - T(\bar{\vphi}) ) \| \nonumber \\
 & \le & \| \phi_k - \bar{\vphi} \|
\end{eqnarray}
and
\begin{equation} \label{satz:eq4}
\| \phi_k - T(\phi_k) \| \, = \,
\| \phi_k - \bar{\vphi} + T(\bar{\vphi}) - T(\phi_k) \| \, \le \,
2 \| \phi_k - \bar{\vphi} \|  .
\end{equation}
Now, let us assume that $(I-T) \phi_k \not\to 0$ for some $\vphi_1$. Since 
the sequence $\| (I-T) \phi_k \|$ is monotone decreasing by \eqref{satz:eq2}, 
there exists $\eps > 0$ and $k_0 \in \N$ such that
\begin{equation} \label{satz:eq5}
\| T(\phi_k) - \phi_k \| \, \ge \, \eps\, ,\ k \ge k_0 .
\end{equation}
From \eqref{satz:eq4} and \eqref{satz:eq5} follows
\begin{equation} \label{satz:eq6}
\| \phi_k-\bar{\vphi} \| \ \ge \ \eps/2 \, ,\  k \ge k_0 .
\end{equation}
Now we obtain from \eqref{satz:eq5} and Lemma~\ref{lemma:conv}
\begin{eqnarray*}
\| \phi_{k+1} - \bar{\vphi} \|
 & \le &
 \| \phi_k - \bar{\vphi} \| \, - \, c\, d_k\, (1-d_k)\,
         \| \phi_k - \bar{\vphi} \| \\
 & \le &
 \| \phi_{k-1} - \bar{\vphi} \| - c\, d_{k-1}\, (1-d_{k-1})\,
         \| \phi_{k-1} - \bar{\vphi} \|  \\
 &     & - \, c\, d_k\, (1-d_k)\, \| \phi_k - \bar{\vphi} \| \\
 & \le &
  \| \phi_{k-1} - \bar{\vphi} \| \, - \, \| \phi_k - \bar{\vphi} \| \,
         c\, \big( d_{k-1} (1-d_{k-1}) + d_k (1-d_k) \big) \, ,
\end{eqnarray*}
for $k \ge k_0$. Repeating the argumentation we have
$$ \| \phi_{k+1} - \bar{\vphi} \|  \ \le \ 
   \| \phi_{k_0} - \bar{\vphi} \| -  c\, \| \phi_k - \bar{\vphi} \| \,
   \T\sum\limits_{j=k_0}^k d_j (1-d_j) . $$
This inequality together with \eqref{satz:eq6} imply
$$ \frac{\eps}{2} \ \le \ \| \phi_{k_0} - \bar{\vphi} \| -
                  c\, \frac{\eps}{2}\, \T\sum\limits_{j=k_0}^k d_j (1-d_j) $$
and we finally obtain
$$ \eps \Big[ 1 + c\, \T\sum\limits_{j=k_0}^k d_j (1-d_j) \Big]
   \ \le \ 2\, \| \phi_{k_0} - \bar{\vphi} \| \, ,\ k \ge k_0 . $$
This gives a contradiction, since $\sum_{j=1}^\infty d_j (1-d_j)$ diverges 
by hypothesis.
\end{proof}

Notice that to prove Theorem~\ref{satz:conv_MM}, it is enough to verify that 
$(I-T) \gamma_k \to 0$, for every $\eps_1 \in \ha$. This can be done with 
minor adaptations in the above proof.

\begin{corollary} \label{corol:conv_MM}

Let $T$ be the operator defined in \eqref{T_def} and $A$ a segmenting 
matrix such that $\sum_{k=1}^\infty d_k (1-d_k)$ diverges. For every $\vphi_1 
\in \ha$ the sequences $\{ \vphi_k \}$, $\{ \phi_k \}$ generated by the 
iteration $(\vphi_1, A, T)$ satisfy
$$ \lim_{k\to\infty} \phi_k \, = \, \bar{\vphi} \, = \,
   \lim_{k\to\infty} \vphi_k\, , $$
where $\bar{\vphi}$ is the unique determined solution of the fixed point 
equation \eqref{fix_pkt_gl}.
\end{corollary}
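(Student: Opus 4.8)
The plan is to deduce the corollary from Theorem~\ref{satz:conv_MM} by converting the convergence of the residual into convergence of the iterates, using the self-adjointness and spectral localization of $T_l$ that the energy inner product $\ipl\cdot,\cdot\ipr_*$ provides. Setting $\gamma_k := \phi_k - \bar{\vphi}$, I would first collect what is already available: Theorem~\ref{satz:conv_MM} gives $(I-T)\phi_k = (I-T_l)\gamma_k \to 0$ strongly, while \eqref{satz:eq3} shows that $\|\gamma_k\|$ is non-increasing, hence $\{\gamma_k\}$ is bounded and $\|\gamma_k\| \to \ell$ for some $\ell \ge 0$. Everything reduces to proving $\ell = 0$.

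Next I would iterate the linear recursion. Formula \eqref{lemma:eq3} reads $\gamma_{k+1} = B_k\gamma_k$ with $B_k := (1-d_k)I + d_k T_l$, whence $\gamma_{k+1} = B_k B_{k-1}\cdots B_1\gamma_1$. Since $T_l$ is self-adjoint and non-expansive on $(\ha,\ipl\cdot,\cdot\ipr_*)$, its spectrum is contained in $[-1,1]$; moreover the uniqueness of the solution of (CP) forces $\mathrm{Ker}(I-T_l) = \{0\}$ (a fixed point of $T_l$ is the Neumann trace of a solution with zero Cauchy data on $\Gamma_1$), so the spectral measure $E$ of $T_l$ carries no mass at $\lambda = 1$. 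The spectral theorem then represents
\begin{equation*}
\| \gamma_{k+1} \|^2 \ = \ \int_{[-1,1]} \prod_{j=1}^{k} \big( 1 - d_j(1-\lambda) \big)^2 \, d\| E_\lambda \gamma_1 \|^2 .
\end{equation*}

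The core step is to show that the integrand tends to zero $\| E\gamma_1 \|^2$-almost everywhere. Fix $\lambda \in [-1,1)$ and put $s := 1-\lambda \in (0,2]$; for every $j$ with $d_j \in (0,1)$ one has $|1 - d_j s| < 1$, and an elementary scalar inequality of the form $-\log|1 - d_j s| \ge c_s\, d_j(1-d_j)$, with $c_s > 0$ depending only on $s$, turns the divergence of $\sum_k d_k(1-d_k)$ into $\sum_j -\log|1 - d_j s| = \infty$, so the product collapses to zero. Only $\lambda = 1$ keeps the integrand equal to one, and that point is $\| E\gamma_1 \|^2$-null; since the integrand is dominated by one and $\| E\gamma_1 \|^2$ is finite, dominated convergence yields $\|\gamma_k\| \to 0$, i.e.\ $\phi_k \to \bar{\vphi}$. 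The statement for $\{\vphi_k\}$ then follows from $\eps_{k+1} = T_l\gamma_k$, which gives $\|\vphi_{k+1} - \bar{\vphi}\| \le \|\gamma_k\| \to 0$, and $\bar{\vphi}$ is the unique limit because \eqref{fix_pkt_gl} has a unique solution by the uniqueness for (CP).

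I expect the genuine obstacle to be exactly the implication ``residual $\to 0$ $\Rightarrow$ iterate $\to$ fixed point'', which is false for general non-expansive maps; here it is rescued by the spectral structure (self-adjointness together with $\mathrm{Ker}(I-T_l) = \{0\}$) announced in the introduction. The only real computation is the uniform-in-$j$ scalar estimate $-\log|1 - d_j s| \gtrsim d_j(1-d_j)$, which is what transmits the hypothesis $\sum_k d_k(1-d_k) = \infty$ into pointwise decay of the spectral product.
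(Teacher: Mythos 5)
Your proof is correct, but it takes a genuinely different route from the paper's. The paper's own proof is two lines: it feeds Theorem~\ref{satz:conv_MM} into the identity $(I-T)\phi_k=(I-T_l)(\phi_k-\bar{\vphi})$, reinterprets the resulting convergence $(I-T_l)(\phi_k-\bar{\vphi})\to 0$ as $\mbox{dist}(\phi_k-\bar{\vphi},\mbox{Ker}(I-T_l))\to 0$, invokes $\mbox{Ker}(I-T_l)=\{0\}$ from \cite[Theorem~2.3]{Le} to get $\phi_k\to\bar{\vphi}$, and obtains $\vphi_k\to\bar{\vphi}$ from $\vphi_{k+1}=T(\phi_k)$ and continuity of $T$. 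You instead bypass Theorem~\ref{satz:conv_MM} entirely (you record its conclusion but your argument never uses it): writing $\gamma_{k+1}=\prod_{j=1}^{k}\bigl((1-d_j)I+d_jT_l\bigr)\gamma_1$, you use the self-adjointness of $T_l$ in $(\ha,\ipl\cdot,\cdot\ipr_*)$ --- a fact from \cite{Le} that the paper itself only invokes later, in Sections~\ref{ssec:rate} and \ref{ssec:source} --- together with the absence of an atom of the spectral measure at $\lambda=1$ (equivalent to $\mbox{Ker}(I-T_l)=\{0\}$), your scalar estimate $-\log|1-d_js|\ge c_s\,d_j(1-d_j)$ (which is valid: the ratio tends to $s$ as $d\to 0$ and to $2$ or $+\infty$ as $d\to 1$, and is positive in between), and dominated convergence. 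The trade-off is instructive. The paper's middle step --- that residual convergence is ``alternatively'' distance-to-kernel convergence --- is exactly the implication you flag as false for general bounded sequences: since $I-T_l$ is compact and injective (as the paper itself notes in Section~\ref{ssec:source}), its range is not closed, and there exist unit vectors $u$ with $\|(I-T_l)u\|$ arbitrarily small; so that step is unjustified as written and needs the special structure of the iteration, which is precisely what your spectral-product argument supplies. The price you pay is the extra structural input (self-adjointness of $T_l$) and length; what you gain, besides rigor, is transparency --- each Mann factor $1-d_j(1-\lambda)$ contracts every spectral component except $\lambda=1$, which carries no mass --- and the fact that your proof of the corollary no longer needs the uniform-convexity machinery of Lemma~\ref{lemma:conv} and Theorem~\ref{satz:conv_MM} at all.
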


\begin{proof}
Since $(f,g)$ are consistent Cauchy data, existence and uniqueness of 
$\bar{\vphi}$ can be assured (see Section~\ref{sec:elcp_fpgl}). 
Since $(I-T) \phi_k = (I-T_l) (\phi_k - \bar{\vphi})$, it follows from 
Theorem~\ref{satz:conv_MM} that $\lim_k \, (I-T_l) (\phi_k - \bar{\vphi}) 
= 0$ or alternatively $\lim_k$ dist$(\phi_k-\bar{\vphi}, N) = 0$, where 
$N := $ Ker$(I-T_l)$. However we know from \cite[Theorem~2.3]{Le} that 
Ker$(I-T_l) = \{ 0 \}$, from what follows $\phi_k \to \bar{\vphi}$. The 
second statement follows from $\lim_k \, \vphi_k = \lim_k \, T(\phi_{k-1}) = 
T(\bar{\vphi}) = \bar{\vphi}$.
\end{proof}

\subsection{A remark on noisy Cauchy data} \label{ssec:noise}

Before analyzing regularization properties and obtaining convergence rates 
for the Mann--Maz'ya iteration (see Sections~\ref{ssec:regul} and 
\ref{ssec:rate} below), it is necessary to make some considerations about 
the treatment of noisy Cauchy data.

Let $(f,g)$ be consistent Cauchy data and $z_{f,g} \in \ha$ the corresponding 
affine term of the operator $T$, defined in Section~\ref{sec:elcp_fpgl}. 
Notice that for every pair of Cauchy data $(\tilde{f},\tilde{g}) \in 
H^{\me}(\Gamma_1) \times H^{\me}_{00}(\Gamma_1)'$, consistent or not, we 
can analogously obtain a corresponding affine term $\tilde{z}$. In this 
section we investigate the following question: Given the measured data 
$(f_\eps,g_\eps)$ in $L^2(\Gamma_1) \times H^{\me}_{00} (\Gamma_1)'$, with
$$ \|f_\eps - f\|_{L^2} + \|g_\eps - g\|_{(H^{\me}_{00})'} \le \eps\, , $$
how can we obtain a corresponding affine term $z_\eps$, such that 
$\| z_{f,g} - z_\eps \| \le \eps$.

We claim that $z_\eps$ can be obtained under the following {\em a priori} 
assumption on the exact Cauchy data: $f \in H^{r}(\Gamma_1)$, $r \ge 1/2$. 
In order to verify this assertion, we first use a smoothing operator 
$S: L^2(\Gamma_1) \to H^{\me}(\Gamma_1)$ to generate a $\tilde{f_\eps} := 
S f_\eps \in H^{\me}(\Gamma_1)$, satisfying $\| f - \tilde{f_\eps} \|_{\me} 
\leq \eps'$. The existence of such an operator follows from

\begin{lemma} \label{lemma-fehler-glaetung}

Let $f \in H^r$, $r > s > 0$. There exists a smoothing operator 
$S: L^2 \to H^s$ and a positive function $\gamma$ with $\lim_{x\downarrow 0} 
\gamma(x) = 0$, such that for $\eps > 0$ and $f_\eps \in L^2$ with 
$\|f - f_\eps\|_{L^2} \leq \eps$, we have
$$ \|f - S f_\eps\|_s \ \le \ \gamma(\eps) . $$
\end{lemma}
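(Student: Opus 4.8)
The plan is to realise $S$ as one member of a one-parameter family of spectral smoothing operators, with the parameter tied to $\eps$ so as to balance an approximation (bias) error against a noise-amplification error. Let $\{e_k\}_{k\ge1}$ be an $L^2(\Gamma_1)$-orthonormal basis of eigenfunctions of the Laplace--Beltrami operator on $\Gamma_1$, with eigenvalues $0\le\lambda_1\le\lambda_2\le\cdots\to\infty$, so that for each Sobolev index $\sigma\ge0$ one has the spectral description $\|u\|_\sigma^2=\sum_k(1+\lambda_k)^\sigma|\langle u,e_k\rangle|^2$ up to equivalent norms. For $\rho>0$ I would set $S_\rho u:=\sum_{\lambda_k\le1/\rho}\langle u,e_k\rangle\,e_k$; this is bounded from $L^2(\Gamma_1)$ into every $H^\sigma(\Gamma_1)$, and $S$ will be taken to be $S_{\rho(\eps)}$ for a parameter $\rho(\eps)$ chosen below.

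First I would record the two elementary estimates on which everything rests. For the bias, the hypothesis $f\in H^r$ together with $s<r$ gives, for the high-frequency tail,
\[ \|f-S_\rho f\|_s^2=\sum_{\lambda_k>1/\rho}(1+\lambda_k)^s|\langle f,e_k\rangle|^2\le\rho^{r-s}\sum_{\lambda_k>1/\rho}(1+\lambda_k)^r|\langle f,e_k\rangle|^2\le\rho^{r-s}\|f\|_r^2, \]
since $(1+\lambda_k)^{s-r}\le\rho^{r-s}$ whenever $\lambda_k>1/\rho$. For stability, any $h\in L^2$ satisfies $\|S_\rho h\|_s^2=\sum_{\lambda_k\le1/\rho}(1+\lambda_k)^s|\langle h,e_k\rangle|^2\le(1+1/\rho)^s\|h\|_{L^2}^2$. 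Hence there is a constant $C$ depending only on $s$ with $\|f-S_\rho f\|_s\le\rho^{(r-s)/2}\|f\|_r$ and $\|S_\rho h\|_s\le C\,\rho^{-s/2}\|h\|_{L^2}$ for $\rho\le1$.

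Next I would combine these through the splitting $f-S_\rho f_\eps=(f-S_\rho f)+S_\rho(f-f_\eps)$ and the bound $\|f-f_\eps\|_{L^2}\le\eps$, giving $\|f-S_\rho f_\eps\|_s\le\rho^{(r-s)/2}\|f\|_r+C\,\rho^{-s/2}\eps$. Balancing the two terms suggests the a priori choice $\rho(\eps):=\eps^{2/r}$, which turns both of them into multiples of $\eps^{(r-s)/r}$ and hence yields a bound of the announced form with $\gamma(\eps):=(\|f\|_r+C)\,\eps^{(r-s)/r}$. Because $r>s$ the exponent $(r-s)/r$ is strictly positive, so $\gamma$ is positive and $\lim_{\eps\downarrow0}\gamma(\eps)=0$, as required; taking $S:=S_{\rho(\eps)}$ finishes the construction.

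I expect the genuine work to lie not in these interpolation-type estimates but in the functional-analytic setup that legitimises the spectral description of $\|\cdot\|_s$ on $\Gamma_1$. As $\Gamma_1$ is merely an arc of $\partial\Omega$, i.e.\ a one-dimensional manifold with boundary, the Laplace--Beltrami operator must be fitted with appropriate boundary conditions (or, equivalently, one works in local charts with a partition of unity and a Fourier description on each piece) and one has to check that the resulting scale of spaces agrees, up to equivalent norms, with the Sobolev scale used elsewhere in the paper. Once this identification is secured the construction above applies verbatim, in particular to the values $s=\me$ and $r>\me$ needed in the application.
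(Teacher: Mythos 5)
Your proposal is correct, but there is nothing in the paper to compare it against in detail: the paper's entire proof of this lemma is the citation ``See \cite[Lemma~14]{BaLe}''. What you have written is a self-contained argument of exactly the kind that citation hides: regularization by spectral projection, with the cutoff at frequency $1/\rho$ balancing the bias term $\rho^{(r-s)/2}\|f\|_r$ against the noise-amplification term $C\rho^{-s/2}\varepsilon$, and the choice $\rho(\varepsilon)=\varepsilon^{2/r}$ giving $\gamma(\varepsilon)=O\big(\varepsilon^{(r-s)/r}\big)$. This is the optimal interpolation-type rate, strictly more than the bare requirement $\gamma(\varepsilon)\to 0$; so your route buys an explicit, quantitative statement, while the paper's buys only brevity. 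Two remarks on the fine print. First, your operator is really a family $S_{\rho(\varepsilon)}$ indexed by the noise level, whereas the lemma quantifies $S$ before $\varepsilon$; this is a defect of the statement rather than of your proof (an $\varepsilon$-independent, $f$-independent $S$ cannot exist, since taking $f_\varepsilon=f$ would force $Sf=f$ for every admissible $f$), and the $\varepsilon$-dependent reading is the one consistent with how the lemma is invoked in Section~\ref{ssec:noise}, where the noise level is known when the smoothing is performed. Second, the issue you flag at the end is the one genuine technical debt of your construction: on the arc $\Gamma_1$ the spectral scale must be set up so that it reproduces the correct space at the exponent actually needed, $s=\me$. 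This requires choosing the eigenbasis with care: with the Neumann (cosine) eigenfunctions on an interval the spectral spaces agree with $H^s$ for $0\le s<3/2$, which covers $s=\me$, whereas the Dirichlet (sine) basis would produce $H^{\me}_{00}$ instead of $H^{\me}$ at that exponent --- precisely the wrong target space for the paper's application, which smooths into $H^{\me}(\Gamma_1)$. Once that choice is made explicit, your argument is complete and can stand in place of the external reference.
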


\begin{proof}
See \cite[Lemma~14]{BaLe}.
\end{proof}

After smoothing the data $f_\eps \in L^2(\Gamma_1)$, we finally obtain from 
the Cauchy data $(\tilde{f_\eps}, g_\eps)$ a corresponding $z_\eps \in 
H^{\me}_{00} (\Gamma_1)'$ such that $|| z_{f,g} - z_\eps || < \eps'$ (note 
that the affine term $z_{f,g}$ depends continuously on the data $(f,g)$).

\subsection{A Regularization property} \label{ssec:regul}

In this section we analyze a regularization property of the Mann--Maz'ya 
iteration. We start defining, for every fixed $\vphi \in \ha$, the family 
of operators
\begin{equation} \label{eq:Rk_def}
R_k^\vphi : \ha \ni \psi \, \longmapsto \,
            \vphi_k \in \ha\, ,\ k \in \N\, ,
\end{equation}
where $\vphi_k$ is $k$-th element of the sequence generated by the 
Mann--Maz'ya algorithm $( \vphi, A, T_l+\psi )$. In the next theorem we 
prove a regularization property of the family $\{ R_k^\vphi \}_k$ with 
respect to the solution $\bar{\vphi}$ of the fixed point equation 
\eqref{fix_pkt_gl}.

\begin{theorem} \label{satz:reg_err}

Let $\vphi$ be an arbitrary element of $\ha$ and $\{ R_k^\vphi \}_{k\in\N}$ 
be the family of operators defined in \eqref{eq:Rk_def}. There exists 
$\eps_0 > 0$ and functions $\tau: (0,\eps_0) \to \R^+$, $k: (0,\eps_0) 
\to \N$, such that \\[1ex]
\begin{tabular}{r@{\ }l}
{\it i)}  & $\tau(\eps) \to 0$, for $\eps \to 0$; \\[1ex]
{\it ii)} & For every pair of Cauchy data $(f_\eps,g_\eps)$ with 
            $\| z_\eps - z_{f,g} \| \le \eps$, we have
\end{tabular}
$$ \| R_{k(\eps)}^\vphi (z_\eps) - \bar{\vphi} \| \, \le \, \tau(\eps) . $$
\end{theorem}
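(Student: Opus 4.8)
The plan is to decompose the total error at the stopping index $k$ into a \emph{data-propagation error}, measuring the sensitivity of the $k$-th iterate to the perturbation of the affine term, and an \emph{approximation error} coming from the exact iteration. For any $k \in \N$ I would use the triangle inequality
$$ \| R_k^\vphi(z_\eps) - \bar{\vphi} \| \ \le \ \| R_k^\vphi(z_\eps) - R_k^\vphi(z_{f,g}) \| \, + \, \| R_k^\vphi(z_{f,g}) - \bar{\vphi} \| . $$
The second summand equals $\| \vphi_k - \bar{\vphi} \|$, where $\{\vphi_k\}$ is the sequence generated by the \emph{exact} Mann--Maz'ya iteration $(\vphi, A, T)$, because $z_{f,g}$ is precisely the affine term of $T$ and hence $T_l + z_{f,g} = T$. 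By Corollary~\ref{corol:conv_MM} (applied with starting value $\vphi_1 = \vphi$) this quantity, which I denote $e(k)$, tends to zero as $k \to \infty$.

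For the first summand I would compare the two iterations run from the common starting point $\vphi$ but with affine terms $z_\eps$ and $z_{f,g}$. Writing $\delta_k$, $\eta_k$ for the differences of the corresponding $\{\vphi_k\}$- and $\{\phi_k\}$-sequences and putting $d := z_\eps - z_{f,g}$, the recursion of the algorithm gives $\delta_{k+1} = T_l \eta_k + d$ with $\eta_k = \sum_{j=1}^k a_{kj}\,\delta_j$ and $\delta_1 = 0$. Using the non-expansivity of $T_l$ together with $\sum_{j=1}^k a_{kj} = 1$ yields $\|\eta_k\| \le \max_{j \le k}\|\delta_j\|$, so that $\max_{j \le k+1}\|\delta_j\| \le \max_{j \le k}\|\delta_j\| + \|d\|$. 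Since $\delta_1 = 0$ and $\|d\| \le \eps$, solving this recursion gives the linear bound $\| R_k^\vphi(z_\eps) - R_k^\vphi(z_{f,g}) \| = \|\delta_k\| \le (k-1)\,\eps$.

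Putting the two estimates together produces $\| R_k^\vphi(z_\eps) - \bar{\vphi} \| \le (k-1)\eps + e(k)$ for every $k$ and every admissible $z_\eps$. It then remains to select the stopping index $k(\eps)$ so that both terms vanish as $\eps \to 0$: the factor $k-1$ multiplying $\eps$ must be kept small, while $k$ must grow in order to drive $e(k)$ to zero. I expect this balancing to be the main obstacle, since Corollary~\ref{corol:conv_MM} supplies \emph{no rate} for $e(k)$, so only a qualitative parameter choice is available here (quantitative rates being deferred to the convergence-rate analysis). I would use the standard a priori construction: choose a strictly increasing sequence $k_n$ with $e(k_n) < 1/n$, then a decreasing null sequence $\eps_n$ with $k_n\,\eps_n < 1/n$, set $\eps_0 := \eps_1$, and define $k(\eps) := k_n$ for $\eps \in (\eps_{n+1},\eps_n]$. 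Setting $\tau(\eps) := (k(\eps)-1)\,\eps + e(k(\eps))$, both summands are then smaller than $1/n$ whenever $\eps \le \eps_n$, which forces $\tau(\eps) \to 0$ and establishes {\it i)}; assertion {\it ii)} is immediate from the displayed bound.
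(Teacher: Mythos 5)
Your proposal is correct and takes essentially the same approach as the paper: the same splitting of the error into a data-propagation term bounded linearly in $k$ (via non-expansivity of $T_l$ and $\sum_{j} a_{kj}=1$, giving the paper's $\| R_{k+1}^0(\psi)\| \le k\|\psi\|$) plus the exact-iteration error $\| R_k^{\vphi-\bar{\vphi}}(0)\|$ handled by Corollary~\ref{corol:conv_MM}, followed by an a priori balancing choice of $k(\eps)$. The only cosmetic differences are that the paper derives the decomposition from the affinity identity $R_k^\vphi(z_\eps)-\bar{\vphi}=R_k^{\vphi-\bar{\vphi}}(z_\eps-z_{f,g})$ and defines $\tau(\eps) := 2\inf_{k\in\N}\{\eps k + \| R_k^{\vphi-\bar{\vphi}}(0)\|\}$ rather than using your explicit diagonal sequence construction.
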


\begin{proof}
Let the Cauchy data $(f_\eps,g_\eps)$ be given as in {\it ii)}. From 
the identity $\bar{\vphi} = R_k^{\bar{\vphi}}(z_{f,g})$, $k\in\N$ follows
\begin{eqnarray}
\| R_k^\vphi (z_\eps) - \bar{\vphi} \|
 &  =  & \| R_k^{\vphi - \bar{\vphi}} (z_\eps - z_{f,g}) \| \nonumber \\
 & \le & \| R_k^{\vphi - \bar{\vphi}} (0) \| +
         \| R_k^0 (z_\eps - z_{f,g}) \|  . \label{reg:eq1}
\end{eqnarray}
Using the facts: $\| T_l \| \le 1$, $\sum_{j=1}^k a_{kj} =1$, one obtains 
by induction
$$ \| R_{k+1}^0 (\psi) \| \ \le \ k \| \psi \|\, ,\ k \in \N . $$
Substituting in \eqref{reg:eq1} we have
\begin{equation} \label{reg:eq2}
\| R_k^\vphi (z_\eps) - \bar{\vphi} \| \ \le \ \eps\, k +
\| R_k^{\vphi - \bar{\vphi}} (0) \|  .
\end{equation}
Now we define for $\eps > 0$, the function
$$ \tau(\eps) \, := \, 2 \inf_{k\in\N} \,
   \{ \eps k + \| R_k^{\vphi - \bar{\vphi}} (0) \| \}  . $$
Note that the sequence $\{ R_k^{\vphi - \bar{\vphi}} (0) \}$ corresponds to 
the Mann--Maz'ya algorithm for the (consistent) Cauchy data $(f,g) = (0,0)$ 
with initial point $\vphi_1 = \vphi - \bar{\vphi}$. From Corollary~%
\ref{corol:conv_MM} follows\, $\lim_{k\to\infty} R_k^{\vphi - \bar{\vphi}}(0) 
= 0$, and we can conclude\, $\lim_{\eps\to 0} \tau(\eps) = 0$.

Given $\eps > 0$ we choose $k(\eps) \in \N$ with
$$ \eps\, k(\eps) + \| R_{k(\eps)}^{\vphi - \bar{\vphi}} (0) \|
   \, < \, \tau(\eps) $$
(this is possible from the definition of $\tau$). The theorem follows now 
from \eqref{reg:eq2}.
\end{proof}

As an immediate consequence of Theorem~\ref{satz:reg_err}, we obtain the 
following regularization property of the Maz'ya iteration:

\begin{corollary} \label{corol:reg_Maz}
The Maz'ya iteration regularizes the fixed point equation \eqref{fix_pkt_gl}, 
i.e. given $\vphi_1 \in H^{\me}_{00}(\Gamma_2)'$, there exists $\eps_0 > 0$ 
and functions $\tau: (0,\eps_0) \to \R^+$, $k: (0,\eps_0) \to \N$, such that 
$\lim\limits_{\eps\to 0} \tau(\eps) = 0$ and for every pair of Cauchy data 
$(f_\eps,g_\eps)$ with $\| z_\eps - z_{f,g} \| \le \eps$ the inequality \ 
$\| \vphi_{k(\eps)} - \bar{\vphi} \| \le \tau(\eps)$ holds.
\end{corollary}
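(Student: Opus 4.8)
The plan is to derive Corollary~\ref{corol:reg_Maz} directly from Theorem~\ref{satz:reg_err} by specializing the segmenting matrix $A$. First I would recall that the Maz'ya iteration is precisely the Mann--Maz'ya iteration $(\vphi_1, A, T)$ with the choice $A = I$ (the identity matrix), as was observed right after the definition of the Mann--Maz'ya algorithm in Section~\ref{sec:mann_cp}. Under this choice one has $\phi_k = \vphi_k$ for all $k$, since each convex combination $\sum_{j=1}^k a_{kj}\vphi_j$ collapses to $\vphi_k$, and the iteration reduces to the Picard scheme $\vphi_{k+1} = T(\vphi_k)$.

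The key observation is that with $A = I$ the diagonal elements are $d_k = a_{k+1,k+1} = 1$, so $\sum_{k=1}^\infty d_k(1-d_k) = 0$, which does \emph{not} diverge. This means Theorem~\ref{satz:conv_MM} and Corollary~\ref{corol:conv_MM} do not apply verbatim to the identity matrix, and so I cannot simply invoke the divergence hypothesis. Fortunately, Theorem~\ref{satz:reg_err} itself never uses the divergence condition: inspecting its proof, the only facts about $T_l$ and $A$ that enter are $\|T_l\| \le 1$ and $\sum_{j=1}^k a_{kj} = 1$ (to get the inductive bound $\|R_{k+1}^0(\psi)\| \le k\|\psi\|$), together with the convergence $\lim_k R_k^{\vphi-\bar{\vphi}}(0) = 0$ used to conclude $\lim_{\eps\to 0}\tau(\eps) = 0$. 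The latter convergence, for the identity matrix, is exactly the convergence of the Maz'ya (Picard) iteration to the fixed point, which follows from Lemma~\ref{lem:je} applied to $T_l$ using the non-expansivity and asymptotic regularity established in \cite{Le}, rather than from Corollary~\ref{corol:conv_MM}.

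Accordingly, I would fix $A = I$ and define the operator family $R_k^\vphi$ exactly as in \eqref{eq:Rk_def}, so that $R_k^{\vphi_1}(z_\eps) = \vphi_k$ is the $k$-th iterate of the Maz'ya algorithm applied to the perturbed affine term $z_\eps$. Then I would apply Theorem~\ref{satz:reg_err} with $\vphi = \vphi_1$ to obtain $\eps_0 > 0$ and functions $\tau$, $k(\cdot)$ with $\tau(\eps) \to 0$ and $\|R_{k(\eps)}^{\vphi_1}(z_\eps) - \bar{\vphi}\| \le \tau(\eps)$ whenever $\|z_\eps - z_{f,g}\| \le \eps$. Rewriting $R_{k(\eps)}^{\vphi_1}(z_\eps)$ as $\vphi_{k(\eps)}$ gives precisely the claimed inequality $\|\vphi_{k(\eps)} - \bar{\vphi}\| \le \tau(\eps)$, completing the proof.

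The main obstacle is the verification that Theorem~\ref{satz:reg_err} genuinely covers the degenerate case $A = I$, i.e. that the theorem's proof does not secretly rely on the divergence of $\sum d_k(1-d_k)$ through a hidden appeal to Corollary~\ref{corol:conv_MM}. The delicate point is the step $\lim_k R_k^{\vphi-\bar{\vphi}}(0) = 0$: for $A = I$ this must be justified by the convergence of the plain Picard iteration (Lemma~\ref{lem:je}) together with $\mathrm{Ker}(I-T_l) = \{0\}$ from \cite[Theorem~2.3]{Le}, in place of the Corollary. Once this substitution is noted, the remainder of the argument is a direct specialization and the corollary follows immediately.
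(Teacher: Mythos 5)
Your proof is correct, and at top level it is the paper's own route: the paper gives no separate argument for Corollary~\ref{corol:reg_Maz}, presenting it as an immediate consequence of Theorem~\ref{satz:reg_err} under the identification of the Maz'ya iteration with the Mann--Maz'ya iteration for the choice $A=I$. Where you genuinely go beyond the paper is in refusing to treat this specialization as immediate, and you are right to do so: for $A=I$ one has $d_k=1$, so $\sum_k d_k(1-d_k)=0$ converges rather than diverges, and the hypotheses of Theorem~\ref{satz:conv_MM} and Corollary~\ref{corol:conv_MM} fail. Since the proof of Theorem~\ref{satz:reg_err} appeals to Corollary~\ref{corol:conv_MM} exactly once --- to obtain $\lim_k R_k^{\vphi-\bar{\vphi}}(0)=0$ --- this is the only step that needs replacing in the degenerate case, and your replacement is the correct one: for $A=I$ the sequence $R_k^{\vphi_1-\bar{\vphi}}(0)=T_l^{k-1}(\vphi_1-\bar{\vphi})$ is the plain Picard iteration, which by Lemma~\ref{lem:je} (using the non-expansivity and asymptotic regularity of $T_l$ in the $\|\cdot\|_*$ topology, established in \cite{Le}) converges to the orthogonal projection of $\vphi_1-\bar{\vphi}$ onto Ker$(I-T_l)$, and this projection vanishes because Ker$(I-T_l)=\{0\}$ by \cite[Theorem~2.3]{Le}. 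The remaining ingredients of the proof of Theorem~\ref{satz:reg_err} --- the identity $\bar{\vphi}=R_k^{\bar{\vphi}}(z_{f,g})$, the splitting \eqref{reg:eq1}, and the inductive bound $\|R_{k+1}^0(\psi)\|\le k\|\psi\|$ --- use only $\|T_l\|\le 1$ and $\sum_{j=1}^k a_{kj}=1$, hence hold verbatim for $A=I$. So your argument closes a small gap that the paper leaves implicit, and the corollary follows exactly as you describe.
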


\subsection{Convergence rates} \label{ssec:rate}

Notice that in Theorem~\ref{satz:reg_err}, we obtain an estimate for the 
iteration error $\eps_k = \vphi_k - \bar{\vphi}$. However, without making 
any further assumption on $\eps_0$ (or equivalently on $\bar{\vphi}$), we 
cannot give concrete choices for $\tau(\cdot)$, $k(\cdot)$ as functions of 
$\eps$, i.e. we cannot prove convergence rates.

In this section we consider the iteration residual and use the {\em 
discrepancy principle} as stopping rule (again without any additional 
regularity assumption on $\bar{\vphi}$), in order to obtain a similar 
(but constructive) result (at least for the residuals; for rates of 
convergence for the iterates themselves, we will need additional 
conditions as explained in Section~\ref{ssec:source}).

We start by defining the iteration residual. Let $(f,g)$ be consistent 
Cauchy data and $z_\eps \in \ha$ with $\| z_\eps - z_{f,g} \| \le \eps$. 
Given $\vphi_1 \in \ha$, let us consider the sequences
$$ \vphi_{k+1}      \, = \, T_l\, \vphi_k + z_{f,g}\, ,\ \ \ \ 
   \vphi_{k+1}^\eps \, = \, T_l\, \vphi_k^\eps + z_\eps\, ,\ 
   k = 1, 2, \ldots , $$
$\vphi_1^\eps = \vphi_1$. The corresponding residuals (exact and real, i.e. 
using noisy data) are defined by
$$ r_k      \, := \, z_{f,g} - (I-T_l) \vphi_k\, ,\ \ \ \ 
   r_k^\eps \, := \, z_\eps - (I-T_l) \vphi_k^\eps . $$
Now let $\mu > 1$ be fixed. According to the discrepancy principle, we should 
stop the iteration at the step $k(\eps,z_\eps)$ when for the first time
$ \| r_{k(\eps,z_\eps)}^\eps \| \ \le \ \mu \eps$, i.e.
\begin{equation} \label{discrep}
k(\eps,z_\eps) \ := \ \min \{ k \in \N\ |\ 
   \| z_\eps - (I-T_l) \vphi_k^\eps \| \le \mu\eps \} .
\end{equation}

\begin{remark} \label{rem:monoton}
Notice that the residual sequences $\{ \| r_k^\eps \|\}$, $\{ \| r_k \|\}$ 
are non-increa\-sing. Indeed, this follows from
\begin{eqnarray*}
z_{f,g} - (I-T_l) \vphi_{k+1}
 & = &  z_{f,g} - (I-T_l)\, (T_l\, \vphi_k + z_{f,g}) \\
 & = &  T_l\, \big( z_{f,g} - (I-T_l) \vphi_k \big)\, , \\[1ex]
z_\eps - (I-T_l) \vphi_{k+1}^\eps
 & = &  T_l\, \big( z_\eps - (I-T_l)\, \vphi_k^\eps \big) .
\end{eqnarray*}
and the non-expansivity of $T_l$.
\end{remark}

Next we obtain an estimate for $k(\eps,z_\eps)$ in \eqref{discrep}. For 
simplicity we consider only the Maz'ya iteration ($A = I$), being the general 
case completely analog. (This result is comparable to the one known for the 
{\em Landweber iteration}; see e.g. \cite[Section~6.1]{EHN} or \cite{EnSc}.)

\begin{theorem} \label{satz:conv-rate}
If $\mu > 1$ is fixed, the stopping rule $k(\eps,z_\eps)$ determined by 
the discrepancy principle in \eqref{discrep} satisfies $k(\eps,z_\eps) = 
O(\eps^{-2})$.
\end{theorem}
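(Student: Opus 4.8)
The plan is to exploit the linear structure of the Maz'ya iteration ($A=I$), for which $\vphi_{k+1}^\eps = T_l\,\vphi_k^\eps + z_\eps$, together with the spectral properties of the linear part $T_l$ established in \cite{Le}, namely that $T_l$ is self-adjoint and satisfies $0\le T_l\le I$ with respect to the inner product of $\ha$. Writing $e_k := \bar{\vphi}-\vphi_k$ for the error of the exact iteration (the one driven by $z_{f,g}$), the fixed point identity $\bar{\vphi}=T_l\bar{\vphi}+z_{f,g}$ gives $e_{k+1}=T_l e_k$, and hence the exact residual may be rewritten as $r_k = z_{f,g}-(I-T_l)\vphi_k = (I-T_l)e_k$. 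First I would estimate how many steps the \emph{exact} residual needs in order to fall below a threshold, and then transfer that bound to the noisy residual via a simple comparison.

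The heart of the argument is a square-summability estimate for the exact residuals. Since $T_l=T_l^*$ and $0\le T_l\le I$, the commuting operators $T_l$ and $I-T_l$ are non-negative, so $T_l(I-T_l)\ge 0$, which is equivalent to the operator inequality $I-T_l^2\ge(I-T_l)^2$. Applying this to $e_k$ and using $e_{k+1}=T_l e_k$ yields
$$
\|e_k\|^2-\|e_{k+1}\|^2 \ = \ \langle(I-T_l^2)e_k,e_k\rangle \ \ge \ \|(I-T_l)e_k\|^2 \ = \ \|r_k\|^2 .
$$
Telescoping over $k$ then gives $\sum_{k=1}^\infty\|r_k\|^2\le\|e_1\|^2=\|\vphi_1-\bar{\vphi}\|^2$. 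Because $\{\|r_k\|\}$ is non-increasing (Remark~\ref{rem:monoton}), the indices with $\|r_k\|>\delta$ form an initial segment, so the first index $K(\delta):=\min\{k:\|r_k\|\le\delta\}$ satisfies $(K(\delta)-1)\,\delta^2\le\sum_{k}\|r_k\|^2\le\|\vphi_1-\bar{\vphi}\|^2$, i.e. $K(\delta)=O(\delta^{-2})$.

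Finally I would compare the noisy and exact residuals. From $r_{k+1}^\eps=T_l\,r_k^\eps$ (Remark~\ref{rem:monoton}) and $r_1^\eps=r_1+(z_\eps-z_{f,g})$ one obtains $r_k^\eps=r_k+T_l^{k-1}(z_\eps-z_{f,g})$, whence the non-expansivity $\|T_l\|\le 1$ and $\|z_\eps-z_{f,g}\|\le\eps$ give $\|r_k^\eps\|\le\|r_k\|+\eps$. Choosing the threshold $\delta:=(\mu-1)\eps$, at the index $K(\delta)$ we get $\|r_{K(\delta)}^\eps\|\le\|r_{K(\delta)}\|+\eps\le(\mu-1)\eps+\eps=\mu\eps$, so the stopping rule \eqref{discrep} must terminate no later than $K(\delta)$. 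Hence $k(\eps,z_\eps)\le K((\mu-1)\eps)\le 1+\|\vphi_1-\bar{\vphi}\|^2/\big((\mu-1)^2\eps^2\big)=O(\eps^{-2})$, which is the assertion.

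The main obstacle is the square-summability step. Because the problem is ill-posed, $\|r_k\|$ may decay arbitrarily slowly for spectral components of $e_1$ accumulating at the spectral value $1$ of $T_l$, so no pointwise decay rate for $\|r_k\|$ is available; the quantitative information must instead be extracted from the telescoping energy identity, and this is precisely where the self-adjointness of $T_l$ and the spectral bound $0\le T_l\le I$ are indispensable. The remaining steps — the residual comparison and the counting argument — are elementary once Remark~\ref{rem:monoton} is in hand.
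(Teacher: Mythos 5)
Your proposal is correct and follows essentially the same route as the paper: the key telescoping estimate $\|e_k\|^2-\|e_{k+1}\|^2\ge\|r_k\|^2$ (which the paper derives from the polarization identity plus $T_l(I-T_l)\ge 0$, and you derive from the equivalent operator inequality $I-T_l^2\ge(I-T_l)^2$), followed by the comparison $\|r_k^\eps\|\le\|r_k\|+\eps$ and the discrepancy threshold $(\mu-1)\eps$. The only cosmetic difference is that you bound the stopping index by counting indices with $\|r_k\|>\delta$, whereas the paper uses the monotonicity of $\{\|r_k\|\}$ to write $k\|r_k\|^2\le\sum_{j=1}^k\|r_j\|^2$; both yield the same $O(\eps^{-2})$ bound.
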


\begin{proof}
Given a linear non-negative operator $T: \ha \to \ha$ we have
$$ \ipl T \psi , T \psi \ipr \, = \, \ipl \psi , \psi \ipr
   - \ipl (I-T)\psi , (I-T)\psi \ipr - 2\, \ipl (I-T)\psi , T \psi \ipr
   \, ,\ \psi \in \ha . $$
Using this identity for $T = T_l$, $\psi = \bar{\vphi} - \vphi_j$, we obtain
\begin{multline} \label{regE:eq1}
\| \bar{\vphi} - \vphi_{j+1} \|^2 \, = \, \| \bar{\vphi} - \vphi_j \|^2
 - \| z_{f,g} - (I-T_l) \vphi_j \|^2 \\
 - 2 \ipl (I-T_l) (\bar{\vphi} - \vphi_j), T_l (\bar{\vphi} - \vphi_j) \ipr
\end{multline}
(notice that\, $T \psi = \bar{\vphi} - \vphi_{j+1}$) and we can estimate
\begin{eqnarray}
\| \bar{\vphi} - \vphi_j \|^2 - \| \bar{\vphi} - \vphi_{j+1} \|^2
 &  =  & \| z_{f,g} - (I-T_l) \vphi_j \|^2 \nonumber \\
 &     & \, + \, 2\, \ipl T_l(I-T_l) (\bar{\vphi} - \vphi_j),
                        (\bar{\vphi} - \vphi_j) \ipr \nonumber \\[1ex]
 & \ge & \| z_{f,g} - (I-T_l) \vphi_j \|^2 . \label{regE:eq2}
\end{eqnarray}
Adding up this inequalities for $j = 1, \ldots, k$, we obtain
$$ \| \bar{\vphi} - \vphi_1 \|^2 - \| \bar{\vphi} - \vphi_{k+1} \|^2 \, \ge \,
   \T\sum\limits_{j=1}^k \| z_{f,g} - (I-T_l) \vphi_j \|^2 \, \ge \,
   k \| z_{f,g} - (I-T_l) \vphi_k \|^2 $$
(in the last inequality we used the monotonicity of the sequence $\| r_k \|$; 
see Remark~\ref{rem:monoton}) 
and we can conclude
\begin{equation} \label{regE:eq3}
\| z_{f,g} - (I-T_l) \vphi_k \|^2 \, \le \,
   k^{-1} \| \bar{\vphi} - \vphi_1 \|^2 .
\end{equation}
Now, let us consider the real residual $r_k^\eps$. We have
\begin{eqnarray*}
\| z_\eps - (I-T_l) \vphi_{k+1}^\eps \|
 &  =  & \| T_l^k (z_\eps - (I-T_l) \vphi_1) \| \\
 & \le & \| T_l^k (z_\eps - z_{f,g} \| \, + \,
         \| T_l^k (z_{f,g} - (I-T_l) \vphi_1) \| \\
 & \le & \eps \, + \, \| z_{f,g} - (I-T_l) \vphi_k \| .
\end{eqnarray*}
Substituting \eqref{regE:eq3} in the last inequality, we obtain
\begin{equation} \label{regE:eq4}
\| z_\eps - (I-T_l) \vphi_{k+1}^\eps \| \ \le \ 
   \eps \, + \, k^{-\frac{1}{2}} \, \| \bar{\vphi} - \vphi_1 \|  .
\end{equation}
Since the right hand side of \eqref{regE:eq4} is lower than $\mu\eps$ for 
$k > (\mu-1)^{-2} \| \bar{\vphi} - \vphi_1 \|^2 \eps^{-2}$, we have 
$k(\eps,z_\eps) \le c\, \eps^{-2}$, where the constant $c > 0$ depends only 
on $\mu$ and $\vphi_1$.
\end{proof}

From Theorem~\ref{satz:conv-rate} we obtain the desired convergence rates 
for the residuals in the Mann--Maz'ya iteration. (Notice that, as in 
Theorem~\ref{satz:reg_err}, we do not make any additional regularity 
assumption on the solution $\bar{\vphi}$.)

\begin{corollary}
Let $(f,g)$ be consistent Cauchy data, $\tau > 1$ and $\eps > 0$. Given the 
noisy data $(f_\eps,g_\eps)$, with $\| z_\eps - z_{f,g} \| \le \eps$, the 
stopping rule $k(\eps,z_\eps)$ determined by the discrepancy principle 
satisfies \\[1ex]
\begin{tabular}{r@{\ \ }l}
{\it i)}  & $\| z_\eps - (I-T_l) \vphi_{k(\eps,z_\eps)}^\eps \| \ \le \ 
            \mu \eps$; \\[1ex]
{\it ii)} & $k(\eps,z_\eps) \, = \, O(\eps^{-2})$.
\end{tabular}
\end{corollary}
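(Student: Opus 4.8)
The plan is to derive both statements directly from Theorem~\ref{satz:conv-rate} and the preliminary estimates already assembled in the discussion preceding the corollary. Part \emph{i)} should be essentially immediate: by the very definition of the stopping index in \eqref{discrep}, $k(\eps,z_\eps)$ is the first integer for which $\| z_\eps - (I-T_l)\vphi_k^\eps\| \le \mu\eps$, so the inequality in \emph{i)} holds by construction, provided we first check that the stopping index is well defined (i.e.\ finite). For part \emph{ii)}, I would invoke Theorem~\ref{satz:conv-rate} directly, which already asserts $k(\eps,z_\eps) = O(\eps^{-2})$ for the Maz'ya case $A=I$; the corollary's role is to package this together with \emph{i)} and to note that the argument carries over to the general segmenting case.

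First I would confirm that $k(\eps,z_\eps)$ is finite for each $\eps>0$. This follows from the estimate \eqref{regE:eq4}, namely $\| z_\eps - (I-T_l)\vphi_{k+1}^\eps\| \le \eps + k^{-1/2}\|\bar{\vphi}-\vphi_1\|$: since $k^{-1/2}\to 0$, the right-hand side eventually drops below $\mu\eps$ (because $\mu>1$ strictly), so the defining set in \eqref{discrep} is nonempty and the minimum exists. Then \emph{i)} is automatic from the minimality in \eqref{discrep}, and \emph{ii)} is exactly the content of Theorem~\ref{satz:conv-rate}, where the explicit threshold $k > (\mu-1)^{-2}\|\bar{\vphi}-\vphi_1\|^2\eps^{-2}$ forces $\mu\eps$ below the bound in \eqref{regE:eq4} and hence guarantees termination at an index of order $\eps^{-2}$.

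The one point requiring slight care — and the place where the ``completely analog'' remark after Theorem~\ref{satz:conv-rate} must be honored — is that the theorem is stated and proved only for the plain Maz'ya iteration ($A=I$), whereas the corollary is phrased for arbitrary consistent Cauchy data in the general Mann--Maz'ya setting. I would note that the monotonicity of the residuals established in Remark~\ref{rem:monoton} (via $r_{k+1} = T_l\, r_k$ and non-expansivity of $T_l$) and the telescoping estimate \eqref{regE:eq3} depend only on the non-expansivity and non-negativity of $T_l$, not on the specific averaging matrix; hence the $O(\eps^{-2})$ bound transfers verbatim. I do not expect a genuine obstacle here: both assertions are corollaries in the literal sense, with \emph{i)} following from the stopping definition and \emph{ii)} from Theorem~\ref{satz:conv-rate}, and the only real verification is the finiteness of the stopping time, which \eqref{regE:eq4} supplies at once.
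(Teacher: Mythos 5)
Your proposal is correct and follows essentially the route the paper intends: part \emph{i)} is immediate from the minimality in the definition \eqref{discrep} once finiteness of the stopping index is secured by \eqref{regE:eq4} (using $\mu>1$), and part \emph{ii)} is precisely the content of Theorem~\ref{satz:conv-rate}, with the extension to general segmenting matrices covered by the ``completely analog'' remark. The paper treats the corollary as an immediate consequence and gives no separate proof, so your explicit verification of finiteness is a sensible (and compatible) elaboration rather than a departure.
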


\subsection{Convergence rates under source conditions} \label{ssec:source}

In this section we again use the discrepancy principle as stopping rule for 
the iteration. However, differently from Section~\ref{ssec:rate}, we make 
additional regularity assumptions on the solution of the fixed point 
equation \eqref{fix_pkt_gl}. This assumptions are stated in the form of 
the so-called {\em source conditions}. This is a common way to insert in 
the estimates some {\em a priori} knowledge about the solution and the 
spectrum of the operator $T_l$. In this way, we can also obtain convergence 
rates for the approximate solutions, not only for the resulting residuals.

Source conditions for linear problems usually have the form
$$ \bar{\vphi} - \vphi_1 \ = \ f(T)\, \psi\, ,\ \ 
   \|\psi\| \le \widetilde{\psi} . $$
If $f(\lbd) = \lbd^\mu$ for some $\mu > 0$, we have the {\em H\"older-type 
source conditions}. Since our problem is exponentially ill-posed (the 
eigenvalues of $T_l$ converge exponentially to 1; see e.g. \cite{Le}) this 
type of condition is too restrictive. Much more natural in this case is to 
use {\em logarithmic-type source conditions}:
\begin{equation} \label{gl:log-source}
 f(\lbd) \ := \ 
   \begin{cases}
     \big( \ln (\exp(1) \lbd^{-1}) \big)^{-p} &\!\!\!\!\!\!,\ 
     \lbd > 0 \\
     \ \ \ \ \ \ \ \ \ \ \ 0 &\!\!\!\!\!\!,\ \lbd = 0
   \end{cases}
\end{equation}
with some parameter $p > 0$. (See Remark~\ref{rem:sc-interpr} for an 
interpretation of this source condition.)

\begin{theorem} \label{satz:source-rate}
Let $(f,g)$ be consistent Cauchy data and assume that the solution 
$\bar{\vphi}$ of the fixed point equation \eqref{fix_pkt_gl} satisfies 
the source condition
\begin{equation} \label{gl:source-cond}
\bar{\vphi} - \vphi_1 \; = \; f(I-T_l)\, \psi\, ,\, \mbox{ for some }
   \psi \in {\cal H}\, ,
\end{equation}
where $\vphi_1 \in {\cal H}$ is some initial guess and $f$ is the function 
defined in \eqref{gl:log-source} with $p \ge 1$. Let $\mu > 2$, 
$(f_\eps, g_\eps)$ some given noisy data with $\| z_\eps - z_{f,g} \| \le 
\eps$, $\eps > 0$ and $k(\eps,z_\eps)$ the stopping rule determined by the 
discrepancy principle. Then there exists a constant $C$, depending on $p$ 
and $\| \psi \|$ only, such that \\[1ex]
\begin{tabular}{r@{\ \ }l}
{\it i)}  & $\| \bar{\vphi} - \vphi_k^\eps \| \ \le \ C\, (\ln k)^{-p}$ \\[1ex]
{\it ii)} & $\| z_\eps - (I-T_l) \vphi_k^\eps \| \ \le \ 
            C\, k^{-1} (\ln k)^{-p}$
\end{tabular} \\[1ex]
for all iteration index $k$ satisfying $1 \le k \le k(\eps,z_\eps)$.
\end{theorem}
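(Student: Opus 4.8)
The plan is to run everything through the functional calculus of the self-adjoint, non-negative operator $B := I - T_l$ (recall from \cite{Le} that $T_l$ is self-adjoint and non-negative in $\langle\cdot,\cdot\rangle_*$, with eigenvalues tending exponentially to $1$ and $\mathrm{Ker}(I-T_l)=\{0\}$, so $\sigma(B)\subset[0,1]$ with $0$ as its only accumulation point). Writing $\delta := z_{f,g} - z_\eps$ (so $\|\delta\|\le\eps$) and using $B\bar{\vphi}=z_{f,g}$, the error $e_k^\eps := \bar{\vphi}-\vphi_k^\eps$ satisfies $e_{k+1}^\eps = T_l e_k^\eps + \delta$ with $e_1^\eps = f(B)\psi$ by the source condition \eqref{gl:source-cond}. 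Unrolling this and using $r_k^\eps = T_l^{k-1} r_1^\eps$ from Remark~\ref{rem:monoton} gives
$$ e_k^\eps = (I-B)^{k-1} f(B)\,\psi + \sum_{j=0}^{k-2}(I-B)^j\,\delta, \qquad r_k^\eps = T_l^{k-1}\big(B f(B)\,\psi - \delta\big). $$
In each expression the first term is the propagated source term and the second the accumulated noise. First I would bound the noise crudely via $\|T_l\|\le 1$: $\big\|\sum_{j=0}^{k-2}(I-B)^j\delta\big\|\le(k-1)\eps$ and $\|T_l^{k-1}\delta\|\le\eps$.

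The heart of the argument is a spectral estimate for the propagated source terms, which I would isolate as the key lemma: for every $s\ge0$, $p\ge1$ there are $C_{s,p}$, $k_0$ with
$$ \sup_{\lbd\in(0,1]}(1-\lbd)^{k}\,\lbd^{s}\,\big(\ln(\exp(1)\lbd^{-1})\big)^{-p} \ \le\ C_{s,p}\,k^{-s}(\ln k)^{-p}, \qquad k\ge k_0. $$
Granting this and the spectral theorem (so $\|g(B)\|=\sup_{\lbd\in\sigma(B)}|g(\lbd)|$), the case $s=0$ yields $\|(I-B)^{k-1}f(B)\psi\|\le C(\ln k)^{-p}\|\psi\|$ and, applied at index $k-1$ with $s=1$, $\|T_l^{k-1}Bf(B)\psi\|\le C\,k^{-1}(\ln k)^{-p}\|\psi\|$. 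To prove the lemma I would substitute $\lbd=t/k$ and use $(1-\lbd)^{k}\le e^{-t}$, reducing the supremum to $k^{-s}\sup_t e^{-t}t^s(\ln(\exp(1)k/t))^{-p}$, then split the range at $t=k^{1-\eta}$ for a small fixed $\eta>0$: on $t\le k^{1-\eta}$ one has $e^{-t}t^s\le C_s$ and $\ln(\exp(1)k/t)\ge\eta\ln k$, while on $t>k^{1-\eta}$ the factor $e^{-t}$ is super-polynomially small and dominates. I expect this estimate to be the main obstacle: it is precisely the place where the exponential ill-posedness forces the logarithmic rate, and the supremum near $\lbd=0$ (where the source term is large but poorly damped) must be handled carefully.

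It then remains to control the linearly growing noise term $(k-1)\eps$ by invoking the discrepancy principle. For every $k<k(\eps,z_\eps)$ minimality gives $\|r_k^\eps\|>\mu\eps$; combining this with $\|r_k^\eps\|\le\eps+C\,k^{-1}(\ln k)^{-p}\|\psi\|$ and $\mu>2$ yields $\eps<C'\,k^{-1}(\ln k)^{-p}$ and, reinserting, $\|r_k^\eps\|\le C''\,k^{-1}(\ln k)^{-p}$, which is \emph{ii)} for $k<k(\eps,z_\eps)$; the endpoint $k=k(\eps,z_\eps)$ follows from the monotonicity of $\{\|r_k^\eps\|\}$ (Remark~\ref{rem:monoton}) applied at $k(\eps,z_\eps)-1$, after absorbing $(k-1)^{-1}(\ln(k-1))^{-p}\le C\,k^{-1}(\ln k)^{-p}$ into the constant. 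For \emph{i)} I would feed the same bound $\eps\le C'\,k^{-1}(\ln k)^{-p}$, now valid for all $k\le k(\eps,z_\eps)$, into the noise estimate, so that $(k-1)\eps\le C'(\ln k)^{-p}$; together with the source-term bound $C(\ln k)^{-p}\|\psi\|$ this gives $\|\bar{\vphi}-\vphi_k^\eps\|\le C(\ln k)^{-p}$. The bounds for $k=1$ are vacuous, and all constants depend only on $p$ and $\|\psi\|$, as claimed.
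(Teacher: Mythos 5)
Your proposal is correct and follows essentially the same route as the paper's own proof: the same decomposition of the error into a propagated source term $(I-P)^{k-1}f(P)\psi$ plus accumulated noise, the same key spectral estimate $\sup_\lbd (1-\lbd)^k\lbd^s\big(\ln(\exp(1)\lbd^{-1})\big)^{-p} \le C\,k^{-s}(\ln k)^{-p}$ (which the paper delegates to its appendix Lemmas A.1--A.2, citing \cite{DES}), and the same discrepancy-principle bootstrap $\eps \le C\,k^{-1}(\ln k)^{-p}$ reinserted into the error bound. The only minor differences are cosmetic: you bound the residual via $r_k^\eps = T_l^{k-1}\big(Pf(P)\psi-\delta\big)$ directly rather than via $\|Pe_k\|+\eps$ (which is why your argument would even work for $\mu>1$ where the paper's decomposition forces $\mu>2$), you sketch a self-contained proof of the spectral lemma instead of citing it, and you handle the endpoint $k=k(\eps,z_\eps)$ explicitly through the monotonicity of Remark~\ref{rem:monoton}, a point the paper's proof leaves implicit.
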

\begin{proof}
We set $e_k := \vphi_k^\eps - \bar{\vphi}$, $P := I-T_l$ 
(note that $P$ is self-adjoint positive non-expansive and compact). It 
follows from the definition of the iterative algorithm that
\begin{equation} \label{gl:id-ek-Pek}
\begin{array}{rl}
     e_k   & = \ (I-P)^{k-1} e_1 \, + \, \sum\limits_{j=0}^{k-2} \, (I-P)^j \,
                   (z_{f,g} - z_\eps)\, , \\[2ex]
     P e_k & = \ (I-P)^{k-1} P e_1 \, + \, \big( P - (I-P) \big)^{k-1}
                   (z_{f,g} - z_\eps)\, ,
\end{array}
\end{equation}
for $k \ge 2$. Now, from Lemma~\ref{lem:app2}, follow the estimates
\begin{equation} \label{gl:abs-ek}
\| e_k \|
 \ \le \ c \, \| \psi \| \, \big( \ln(k+1) \big)^{-p} + \, \eps\, (k-1)
\end{equation}
and
\begin{equation} \label{gl:abs-Pek}
\| P e_k \|
 \ \le \ c \, \| \psi \| \, (k-1)^{-1} \big( \ln(k+1) \big)^{-p} + \, \eps .
\end{equation}
Further, we obtain from the discrepancy principle
\begin{equation} \label{gl:abs-dp}
\mu\, \eps \ \le \ \| z_\eps - (I-T_l) \vphi_k^\eps \|
 \  =  \  \| P \vphi_k^\eps - P \bar{\vphi} + z_{f,g} - z_\eps \|
 \ \le \  \| P e_k \| + \eps .
\end{equation}
for $1 \le k < k(\eps,z_\eps)$. It follows from \eqref{gl:abs-dp} and 
\eqref{gl:abs-Pek}
\begin{equation} \label{gl:abs-eps}
\eps\, (\mu - 2) \ \le \ c \, \| \psi \| \, (k-1)^{-1}
                        \big( \ln(k+1) \big)^{-p}.
\end{equation}
Now, from \eqref{gl:abs-eps} and \eqref{gl:abs-ek} we obtain
\begin{equation} \label{gl:abs2-ek}
\| e_k \| \ \le \ c \, \| \psi \| \, \big( 1 + \T\frac{1}{\mu-2} \big)\, 
                \big( \ln(k+1) \big)^{-p} .
\end{equation}
Since $\tilde{c} := \sup_{k\in\N} \{ (\ln(k+1) / \ln(k))^{-p} \} < \infty$, 
assertion {\it i)} follows from \eqref{gl:abs2-ek} with $C = c \| \psi \| 
(1 + \frac{1}{\mu-2})\, \tilde{c}$. Now we prove {\it ii)}. It follows from 
\eqref{gl:abs-dp} and \eqref{gl:abs-Pek}
$$  \| z_\eps - (I-T_l) \vphi_k^\eps \| \ \le \ \| P e_k \| + \eps
    \ \le \ c\, \| \psi \| \, (k-1)^{-1} \big( \ln(k+1) \big)^{-p} + 2\eps $$
and together with \eqref{gl:abs-eps} we have
\begin{equation} \label{gl:abs-resid}
 \| z_\eps - (I-T_l) \vphi_k^\eps \| \ \le \ c\, \| \psi \| \big( 1 +
 \T\frac{2}{\mu-2} \big)\, (k-1)^{-1} \big( \ln(k+1) \big)^{-p} .
\end{equation}
Since $c^\star := \sup_{k\in\N} \{ (k+1) / k) \} < \infty$, assertion 
{\it ii)} follows from \eqref{gl:abs-resid} with $C = c \| \psi \| 
(1 + \frac{1}{\mu-2})\, c^\star\, \tilde{c}$.
\end{proof}

It is simple to check that, in the case of exact Cauchy data ($\eps = 0$), 
assertions {\it i)} and {\it ii)} of Theorem~\ref{satz:source-rate} hold 
for every $k \ge 1$. Therefore, in the case of exact data and under the 
source condition \eqref{gl:source-cond}, the iteration approximates the 
solution of the fixed point equation \eqref{fix_pkt_gl} with a rate of 
\,$O \big( \ln(k)^{-p} \big)$.

In the case of noisy data, the next theorem gives an estimate for the 
asymptotic behavior of the stopping index $k(\eps,z_\eps)$ in dependence 
of the noisy level $\eps$.

\begin{theorem}
Set\, $k_\eps := k(\eps,z_\eps)$. Under the assumptions of Theorem~%
\ref{satz:source-rate} we have \\[1ex]
\begin{tabular}{r@{\ \ }l}
{\it i)}  & $k_\eps \big( \ln(k_\eps) \big)^p \ = \ O ( \eps^{-1} )$; \\[1ex]
{\it ii)} & $\| \bar{\vphi} - \vphi_{k_\eps}^\eps \| \ \le \ 
            O \big(  (-\ln \sqrt{\eps})^{-p} \big)$.
\end{tabular}
\end{theorem}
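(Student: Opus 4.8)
Write $k_\eps := k(\eps,z_\eps)$ for the stopping index from \eqref{discrep}, and keep the abbreviations $P := I-T_l$ and $e_k := \vphi_k^\eps - \bar\vphi$ from the proof of Theorem~\ref{satz:source-rate}. Both assertions should be read off from the $k$-indexed rates of that theorem, once the stopping index is controlled from above (for {\it i)}) and from below (for {\it ii)}) in terms of $\eps$.

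For {\it i)} the plan is to exploit that the iteration has \emph{not} yet stopped at the index $k_\eps - 1$, so that the residual there still exceeds $\mu\eps$; this is exactly the hypothesis under which \eqref{gl:abs-eps} was established. Evaluating \eqref{gl:abs-eps} at $k = k_\eps - 1$ gives
$$ (\mu-2)\,\eps \ \le \ c\,\|\psi\|\,(k_\eps-2)^{-1}\,\big(\ln k_\eps\big)^{-p}, $$
hence $(k_\eps-2)\,(\ln k_\eps)^p \le \frac{c\|\psi\|}{\mu-2}\,\eps^{-1}$. Since for every fixed $k$ the exact residual $\|r_k\| = \|(I-P)^{k-1}Pf(P)\psi\|$ is a positive number independent of $\eps$, the noisy residual $\|r_k^\eps\| \ge \|r_k\| - \eps$ still exceeds $\mu\eps$ for $\eps$ small, so $k_\eps \to\infty$ as $\eps\to 0$; thus $k_\eps - 2 \ge \tfrac12 k_\eps$ for small $\eps$, and {\it i)} follows.

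For {\it ii)} the natural route is to combine {\it i)} with Theorem~\ref{satz:source-rate}{\it i)} evaluated at $k = k_\eps$, which already delivers $\|\bar\vphi - \vphi_{k_\eps}^\eps\| \le C(\ln k_\eps)^{-p}$ (accounting for both the approximation and the propagated-noise part of $e_{k_\eps}$). Everything then reduces to the lower bound $\ln k_\eps \ge -\ln\sqrt\eps = \tfrac12\ln(\eps^{-1})$, i.e.\ to a polynomial lower estimate $k_\eps \ge \eps^{-1/2}$ for the stopping index; substituting it into $C(\ln k_\eps)^{-p}$ yields the claimed $O\big((-\ln\sqrt\eps)^{-p}\big)$, the precise constant being harmless since any bound $\ln k_\eps \ge c\,\ln(\eps^{-1})$, $c>0$, would suffice.

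Establishing this lower bound on $k_\eps$ is, I expect, the main obstacle: the inequalities flowing from the residual \emph{upper} bounds \eqref{gl:abs-ek}, \eqref{gl:abs-eps} constrain $k_\eps$ only from above, so a genuinely new ingredient — a bound on how \emph{fast} the residual may decay — is required. I would argue through $k_\eps \ge \min\{k : \|r_k\| \le (\mu+1)\eps\}$, which holds because $\|r_{k_\eps}^\eps\| \le \mu\eps$ forces $\|r_{k_\eps}\| \le (\mu+1)\eps$ (using $\|r_k^\eps - r_k\| \le \eps$) while $\{\|r_k\|\}$ is non-increasing by Remark~\ref{rem:monoton}. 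One then needs a lower estimate $\|r_k\| \ge c\,k^{-\alpha}$ for the exact residual $r_k = (I-P)^{k-1}Pf(P)\psi$. Writing this in the spectral representation of the self-adjoint, injective, compact operator $P$ (whose eigenvalues tend to $0$ exponentially, see \cite{Le}) as $\|r_k\|^2 = \int (1-\lbd)^{2(k-1)}\lbd^2 f(\lbd)^2\, d\|E_\lbd\psi\|^2$ and restricting to the band $\lbd \le 1/k$, where $(1-\lbd)^{2(k-1)}$ stays bounded below, isolates the slowly decaying modes and should produce the estimate. The delicate point is that this needs the spectral content of $\psi$ near $\lbd = 0$ to be non-degenerate; for full rigor in the general case it may be cleaner to bound the approximation part of $e_{k_\eps}$ directly by an interpolation inequality adapted to the logarithmic index function $f$ of \eqref{gl:source-cond}, exploiting that the discrepancy principle keeps $\|P e_{k_\eps}\|$ of order $\eps$.
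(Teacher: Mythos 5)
Your argument for \emph{i)} is the paper's own: evaluate \eqref{gl:abs-eps} at $k = k_\eps - 1$ and absorb the shift from $k_\eps-2$ to $k_\eps$; that part is fine. The genuine gap is in \emph{ii)}. Your route stands or falls with the lower bound $\ln k_\eps \ge c\,\ln(\eps^{-1})$, and this bound is false under the hypotheses of Theorem~\ref{satz:source-rate}: the ``delicate point'' you flag is fatal, not a technicality. Take $\psi$ to be a single eigenvector $v_1$ of the compact, self-adjoint, positive operator $P = I - T_l$, say $P v_1 = \lbd_1 v_1$ with $\lbd_1 \in (0,1)$ (admissible: pick any consistent Cauchy data, let $\bar{\vphi}$ be its solution, and choose the initial guess $\vphi_1 := \bar{\vphi} - f(\lbd_1) v_1$). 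Then the exact residual is $r_k = (I-P)^{k-1} P f(P)\psi = (1-\lbd_1)^{k-1} \lbd_1 f(\lbd_1)\, v_1$, which decays \emph{geometrically}, not polynomially; since $\| r_k^\eps \| \le \| r_k \| + \eps$ and $\mu > 2$, the discrepancy principle \eqref{discrep} is triggered as soon as $(1-\lbd_1)^{k-1} \lbd_1 f(\lbd_1) \|\psi\| \le (\mu - 1)\eps$, whence $k_\eps = O\big(\ln(\eps^{-1})\big)$. In this situation your bound $C (\ln k_\eps)^{-p}$ is only $O\big( (\ln\ln(\eps^{-1}))^{-p} \big)$, strictly weaker than the asserted $O\big( (-\ln\sqrt{\eps})^{-p} \big)$. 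So both your proposed residual estimate $\|r_k\| \ge c\, k^{-\alpha}$ and the reduction of \emph{ii)} to $k_\eps \ge \eps^{-1/2}$ are dead ends: no argument that funnels the error through $(\ln k_\eps)^{-p}$ alone can prove the theorem.

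The paper's proof needs no lower bound on $k_\eps$ at all; a small stopping index only helps, since it shrinks the propagated data error. From \eqref{gl:id-ek-Pek} one writes $e_{k_\eps} = f(P)\psi_{k_\eps} + \sum_{j=0}^{k_\eps-2} (I-P)^j (z_{f,g} - z_\eps)$ with $\psi_{k_\eps} := (I-P)^{k_\eps-1}\psi$, so that $\| e_{k_\eps} \| \le \| f(P) \psi_{k_\eps} \| + \eps\, k_\eps$. The approximation part still satisfies a source condition, with $\|\psi_{k_\eps}\| \le \|\psi\|$, and the discrepancy principle \emph{at the stopping index itself} gives $\| P f(P) \psi_{k_\eps} \| \le (\mu+1)\eps$, which is \eqref{gl:pfp-abs-le}. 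The decisive step --- the one your final sentence gestures at but does not carry out --- is the conversion of this $O(\eps)$ bound on $\| P f(P)\psi_{k_\eps} \|$ into $\| f(P)\psi_{k_\eps} \| = O\big( (-\ln(\eps^{2/3}))^{-p} \big)$, i.e.\ \eqref{gl:fpkeps-abs}, via Jensen's inequality for the convex function $\hat{h}$ (Lemma~\ref{lem:app3}) followed by Lemma~\ref{lem:app4}; this interpolation uses only $\|\psi_{k_\eps}\| \le \|\psi\|$ and never mentions $k_\eps$. The noise term is then handled by \emph{i)}: Lemma~\ref{lem:app5} turns $k_\eps (\ln k_\eps)^p = O(\eps^{-1})$ into $k_\eps = O\big( \eps^{-1} (-\ln\sqrt{\eps})^{-p} \big)$, hence $\eps\, k_\eps = O\big( (-\ln\sqrt{\eps})^{-p} \big)$, and adding the two contributions gives \emph{ii)}. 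In short, your fallback suggestion is not merely ``cleaner'' --- it is the only viable one of your two routes, and it is exactly what the paper does.
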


\begin{proof}
In the sequence we use the notation of Theorem~%
\ref{satz:source-rate}. From inequality \eqref{gl:abs-eps} for 
$k = k_\eps - 1$ follows
$$ (\mu - 2) \eps \ \le \ c_1 (k_\eps - 2)^{-1} (\ln\, k_\eps)^{-p} $$
and we obtain
$$ \eps^{-1} \ \ge \ c_2\, (k_\eps - 2) \, (\ln\, k_\eps)^p
   \ \ge \ c_3\, k_\eps\, (\ln\, k_\eps)^p , $$
proving the first assertion. Now we prove {\it ii)}. From \eqref{gl:id-ek-Pek} 
follows
\begin{eqnarray}
e_{k_\eps}
 & = & (I-P)^{k_\eps-1} e_1 \, + \, \T\sum\limits_{j=0}^{k_\eps-2}\, (I-P)^j\,
       (z_{f,g} - z_\eps)\, , \nonumber \\
 & = & f(P) \psi_{k_\eps} \, + \, \T\sum\limits_{j=0}^{k_\eps-2} \, (I-P)^j \,
           (z_{f,g} - z_\eps)\, , \label{gl:ekeps-ident}
\end{eqnarray}
where $\psi_{k_\eps} := (I-P)^{k_\eps-1}\psi $. Then we can estimate
\begin{equation} \label{gl:ekeps-abs}
\| e_{k_\eps} \| \ \le \ \| f(P) \psi_{k_\eps} \| \, + \, \eps\, k_\eps\, .
\end{equation}
Next we estimate the term $P f(P) \psi_{k_\eps}$. From \eqref{gl:ekeps-ident} 
follows
\begin{eqnarray}
\| P f(P) \psi_{k_\eps} \|
 &  =  & \big\| P e_{k_\eps} - \big[ \big( I - (I-P)^{k_\eps-1} \big)\,
         (z_{f,g} - z_\eps) \big] \big\| \nonumber \\
 & \le & \| P e_{k_\eps} - (z_{f,g} - z_\eps) \| + \eps \nonumber \\
 &  =  & \| (I - T_l) \vphi_{k_\eps}^\eps - z_\eps \| + \eps \nonumber \\
 & \le & (\mu + 1) \eps . \label{gl:pfp-abs-le}
\end{eqnarray}
Further, we have from Lemma~\ref{lem:app3}
\begin{eqnarray}
\| P f(P) \psi_{k_\eps} \|^2 & & \nonumber \\
&\mbox{} \hspace{-4.2cm} =  & \hspace{-2.0cm}
 \exp(1)^{-2} \int_0^1 \exp \big( -[ (1-\ln(\lbd))^{-2p} ]^{-1/(2p)} \big)^2\,
 (1-\ln(\lbd))^{-2p}\, d\|E_\lbd \psi_{k_\eps}\|^2 \nonumber \\
&\mbox{} \hspace{-4.2cm} \ge  & \hspace{-2.0cm}
 \exp(1)^{-2}\, \hat{h} \Big( \int_0^1 (1-\ln(\lbd))^{-2p}\,
 d\|E_\lbd \psi_{k_\eps}\|^2 \Big) \nonumber \\[1ex]
&\mbox{} \hspace{-4.2cm} =  & \hspace{-2.0cm} \label{gl:pfp-abs-ge}
 \exp(1)^{-2}\, \hat{h} \big( \| f(P) \psi_{k_\eps} \|^2 \big) .
\end{eqnarray}
Then we obtain from \eqref{gl:pfp-abs-le}, \eqref{gl:pfp-abs-ge} and 
Lemma~\ref{lem:app4}
\begin{equation} \label{gl:fpkeps-abs}
\|f(P) \psi_{k_\eps}\| \ \le \ O \big( (-\ln (\eps^\frac{2}{3}))^{-p} \big) .
\end{equation}
Now we estimate the term $\eps k_\eps$. From assertion {\it i)} and 
Lemma~\ref{lem:app5} follows
\begin{equation} \label{gl:keps-abs}
k_\eps \ = \ O \big( \eps^{-1}\, (-\ln \sqrt{\eps})^{-p} \big) .
\end{equation}
Now, from \eqref{gl:ekeps-abs}, \eqref{gl:fpkeps-abs}, \eqref{gl:keps-abs} 
follows
$$ \| e_{k_\eps} \| \ \le \ O \big( (-\ln (\eps^\frac{2}{3}))^{-p} \big)
   \, + \, \eps\, O \big( \eps^{-1}\, (-\ln \sqrt{\eps})^{-p} \big)
   \ \le \ O \big( (-\ln \sqrt{\eps})^{-p} \big) , $$
proving the second assertion.
\end{proof}

\begin{remark} \label{rem:sc-interpr}
Next we consider the question of how to interpret the source condition 
defined in \eqref{gl:source-cond}. Let $\Omega$ be the square $[-\pi,\pi] 
\times [-\pi,\pi]$, $\Gamma_1 = \{ (x,y) \in \partial \Omega;\ x = -\pi \}$, 
$\Gamma_2 = \{ (x,y) \in \partial \Omega;\ x = \pi \}$ and consider the 
Cauchy problem
$$ \left\{ \begin{array}{ll}
     \Delta u = 0     &,\ \mbox{in} \ \Omega \\
     u = f            &,\ \mbox{at} \ \Gamma_1 \\
     u_\nu = g        &,\ \mbox{at} \ \Gamma_1 \\
     u(x,\pm\pi) = 0  &,\ x \in (-\pi,\pi)
   \end{array} \right. $$
where $f(y)=\sum_{j=1}^N a_j\sin(jy)$,\, $g(y) = \sum_{j=1}^N b_j\sin(jy)$. 
Given the Neumann data $\vphi(y) = \sum_{j=1}^{N}{\vphi_j \sin(jy)}$, we can 
explicitly represent the operator $T$ defined in \eqref{T_def} by
\begin{equation} \label{T_explizit}
   (T \varphi)(y)\ =\ \sum_{j=1}^{N}{\Big[ {\Big( \frac{\alpha_j}{\beta_j}
                                              \Big)}^2 \varphi_j
                   + \Big( \frac{j\,\alpha_j}{\beta_j^2}\Big) a_j
                   + \frac{1}{\beta_j}\ b_j \Big] \sin(jy)}\, ,
\end{equation}
where $\alpha_j = sinh(2j\pi)$ and $\beta_j = cosh(2j\pi)$. Now we define 
the Sobolev spaces of periodic functions
$$ H^s_{per}(-\pi,\pi) \ := \ \{ \vphi(y) = \T\sum\limits_{j\in\Z} 
   \vphi_j\ e^{ijy} \ | \ \sum\limits_{j\in\Z} (1+j^2)^s \vphi_j^2
   \, < \, \infty \}\, ,\ s\in\R . $$
If the Cauchy data $(f,g)$ is sufficiently regular, the Maz'ya iteration is 
well defined at $H^{-\me}_{per}(\Gamma_1)$ and we obtain from 
\eqref{T_explizit} a spectral representation of the linear part of the 
operator $T$:
\begin{equation} \label{Tl_explizit}
(T_l \, \vphi)(y) \ = \ \sum_{j=1}^\infty \lbd_j \, \vphi_{0,j} \, \sin(jy)
\end{equation}
with $\lbd_j = (\alpha_j / \beta_j)^2$, 
at the space $H := \overline{\mbox{Span}\{ \sin(jy),\ j\in\N \}}^ 
{ ||\cdot||_{H^{-\me}_{per}} }$. From the estimate
\begin{eqnarray*}
\ln \Big( \frac{\exp(1)}{1 - \lbd_j} \Big)
 & \ge & 1 - \ln \Big(\exp(1) \Big[ 1 - \frac{\alpha_j}{\beta_j} \Big] \Big) \\
 &  =  & - \ln \Big( \frac{2\exp(-2j\pi)}{\exp(2j\pi)+\exp(-2j\pi)} \Big)
 \ \ge \ 4\pi j - 1
\end{eqnarray*}
and from the source condition \eqref{gl:source-cond} follows
\begin{eqnarray*}
\| \bar{\vphi} - \vphi_1 \|_p^2
 &  =  & \T\sum\limits_{j=1}^\infty \D(1+j^2)^p \,
         \ln \Big( \frac{\exp(1)}{1 - \lbd_j} \Big)^{-2p} \psi_j^2 \\
 & \le & \T\sum\limits_{j=1}^\infty (1+j^2)^p \, (4\pi j - 1)^{-2p} \,
         \psi_j^2 \ \le \ c \sum\limits_{j=1}^\infty \psi_j^2 \ < \ \infty .
\end{eqnarray*}
This means that the source condition \eqref{gl:source-cond} can be interpreted 
as a regularity condition in the sense of $H^p$ spaces, i.e. $\bar{\vphi} - 
\vphi_1 \in H^p_{per}(\Gamma_1)$. This shows that a logarithmic source 
condition is indeed appropriate for our problem.
\end{remark}
%
%
%
\section{Numerical experiments} \label{sec:munerics}

Next we present some numerical results related to the numerical 
implementation of the Mann--Maz'ya iteration. The first two problems concern 
consistent Cauchy problems in a square and in an annular domain. In the third 
example we consider a problem with noisy data.

The computation was performed on the Silicon Graphics SGI-machines (based 
on R12000 processors; 32-bit code) at the Spezialforschungsbereich F013. 
The elliptic mixed boundary value problems were solved using the (NETLIB) 
software package PLTMG.%
\footnote{See URL\, {\em http://www.netlib.org}\, for details.}

\subsection{A consistent problem in a rectangular domain} \label{ssec:num1}

Let $\Omega \subset \R^2$ be the open rectangle $(0,1) \times (0,3/4)$ and 
define the following subsets of $\partial\Omega$:
$$         \Gamma_1 := \{ (x,0) ; x \in (0,1) \}\, , \ \ \ \ \ \ 
           \Gamma_2 := \{ (x, 3/4) ; x \in (0,1) \}\, , $$
$$        \Gamma_3 := \{ (0,y) ; y \in (0,3/4) \}\, , \ \ \ \,
          \Gamma_4 := \{ (1,y) ; y \in (0,3/4) \} . $$
We consider the Cauchy problem
$$  \left\{ \begin{array}{ccl}
            \Delta \, u  & = & 0 \ ,\ \mbox{ in } \Omega \\
            u            & = & f \ ,\ \mbox{ at } \Gamma_1 \\
            u_{\nu}      & = & g \ ,\ \mbox{ at } \Gamma_1 \\
            u            & = & 0 \ ,\ \mbox{ at } \Gamma_3 \cup \Gamma_4
            \end{array} \right.  $$
where the Cauchy data\, $f(x)= \sin(\pi x)$,\, $g \equiv 0$\, is given at 
$\Gamma_1$. We aim to reconstruct the (Dirichlet) trace of $u$ at $\Gamma_2$. 
As one can easily check, the exact solution of this Cauchy Problem is given 
by\, $\bar{u}(x,y) = \cosh(\pi y) \, \sin(\pi x)$.

We used in the iteration the matrix $A = (a_{jk})$, with $a_{jk} = k^{-1}$ 
for $j \le k$. As initial guess, we chose $\vphi_1 \equiv 0$. Each mixed 
boundary value problem was solved using (multi-grid) finite element methods, 
with linear elements and a uniform mesh with 65\,921 nodes (256 nodes on 
$\Gamma_2$). We used the stopping rule $\| \vphi_k - \vphi_{k-1} \|_ 
{L^2(\Gamma_2)} \le 10^{-3}$. 
In Figure~\ref{fig:rectang1} we present the results corresponding 
to the Mann--Maz'ya iteration: the dotted line represents the exact solution 
and the solid line represents the sequence generated by $(0,A,T)$.

\begin{figure}[t] \unitlength1cm
\begin{center}
\begin{picture}(14,4)
\centerline{
\epsfxsize3.3cm\epsfysize4cm \epsfbox{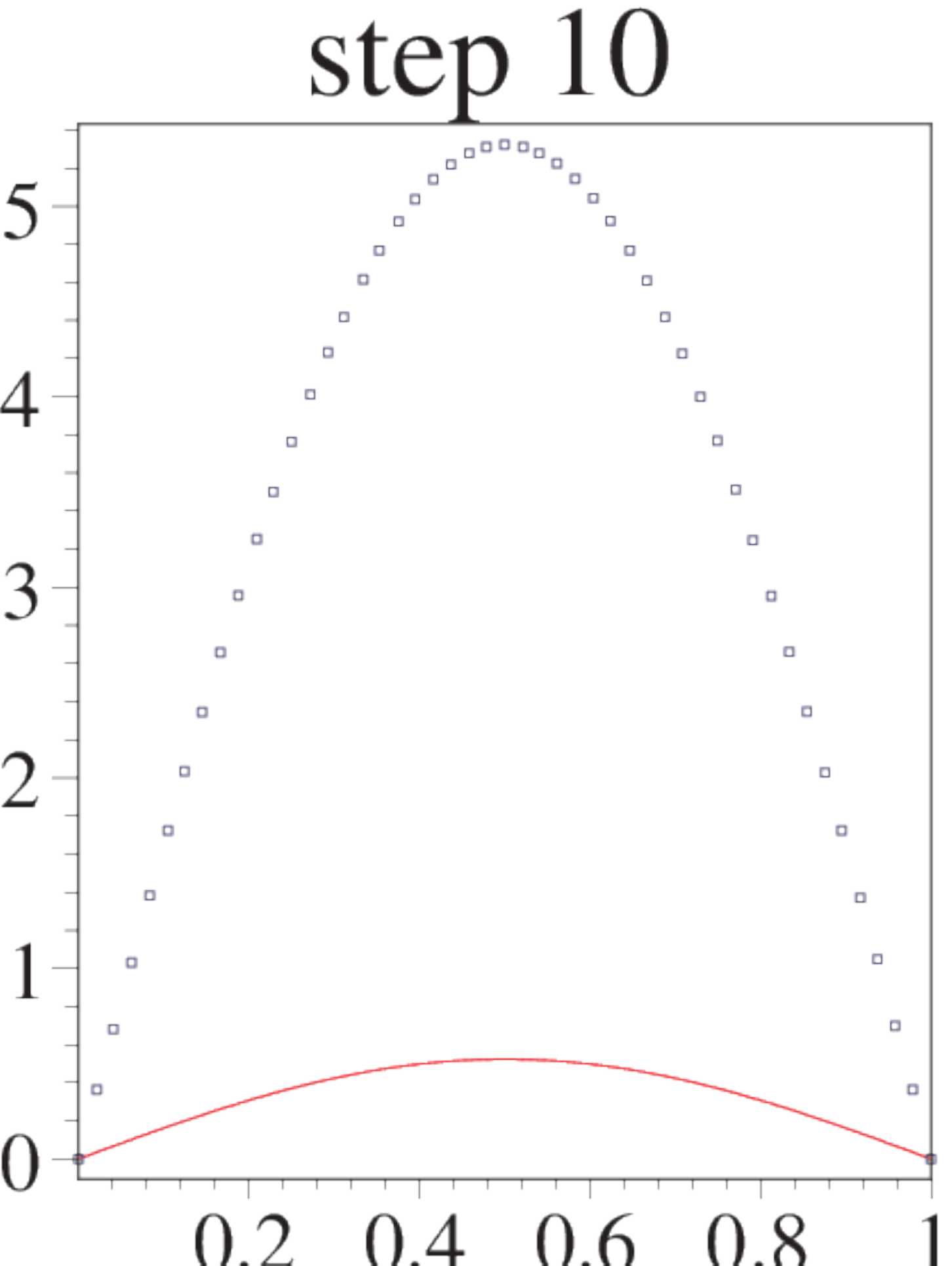}
\epsfxsize3.3cm\epsfysize4cm \epsfbox{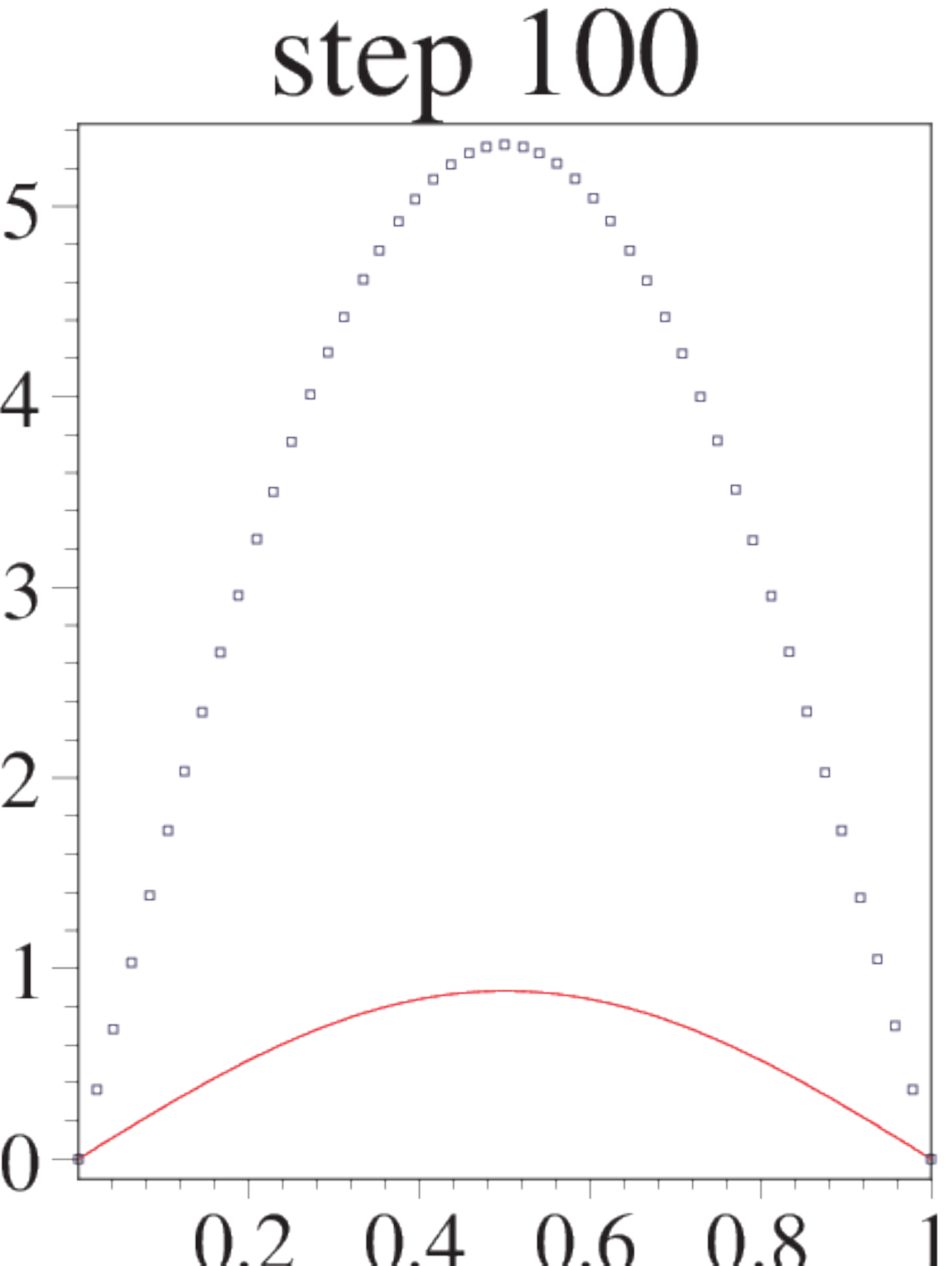}
\epsfxsize3.3cm\epsfysize4cm \epsfbox{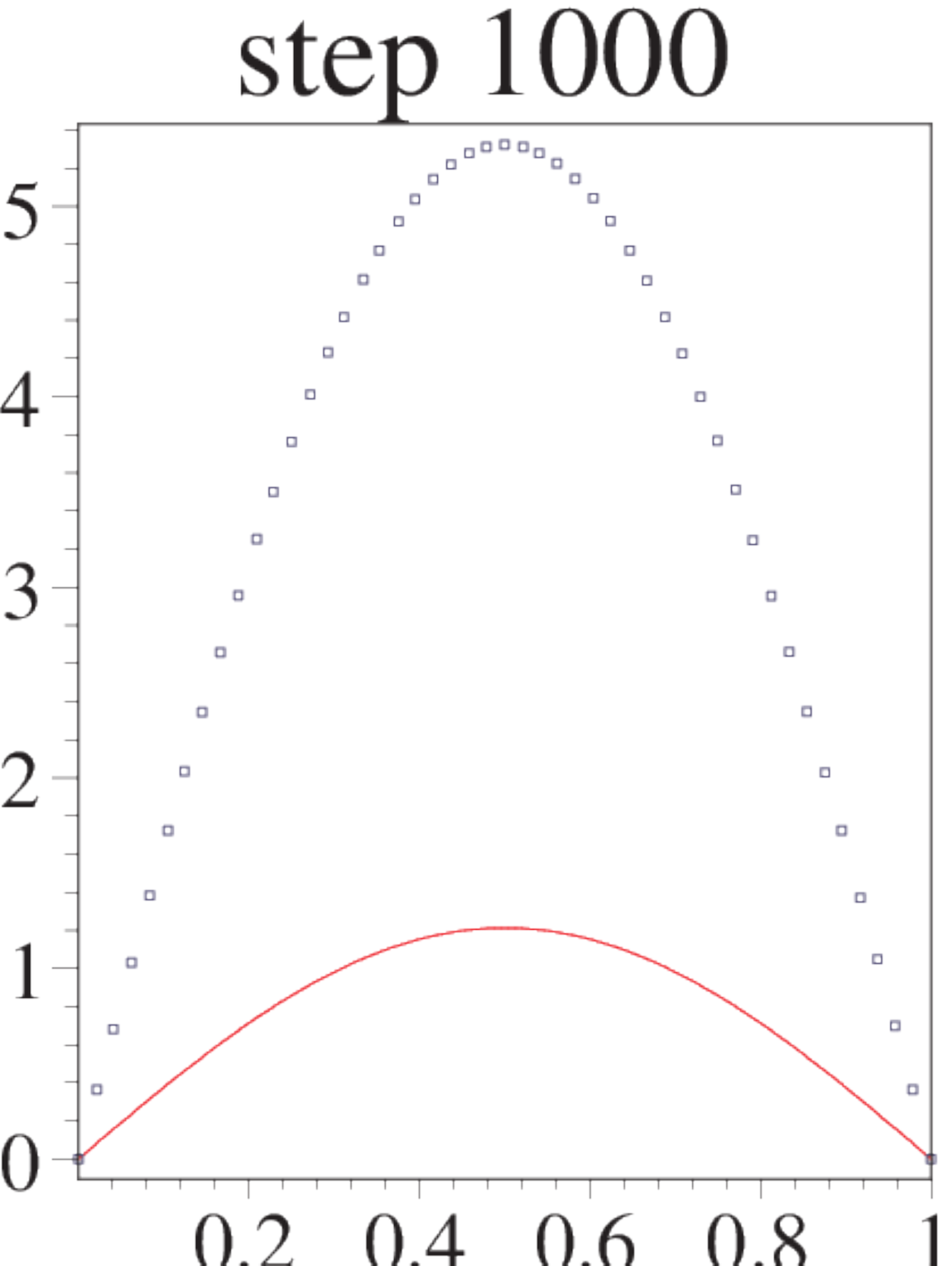}
\epsfxsize3.3cm\epsfysize4cm \epsfbox{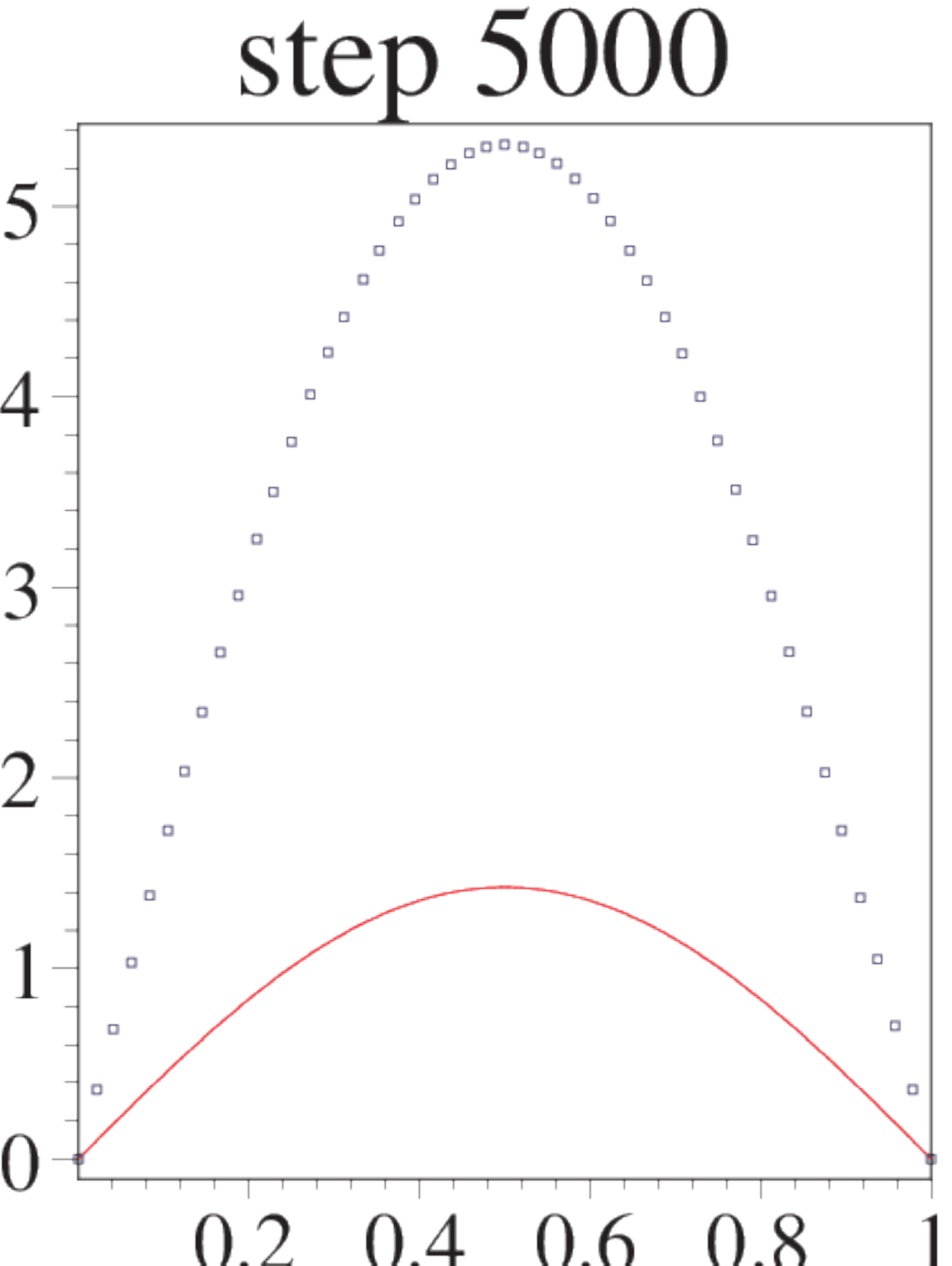} }
\end{picture}
\end{center} \vskip-0.6cm
\caption{Rectangular domain; iteration for consistent Cauchy data 
\label{fig:rectang1}}
\end{figure}

As one can observe in Figure~\ref{fig:rectang1}, the convergence rate decays 
very fast. This can be in part explained by linear convex combination used 
to compute $\psi_k$ in the Mann--Maz'ya iteration (note that\, $\psi_{k+1} 
- \psi_k = \frac{1}{k+1} \vphi_{k+1} - \frac{1}{k(k+1)} \sum_{j=1}^k\vphi_k$). 
As an alternative to improve the convergence rate, we relaxed the stopping 
rule and restarted the iteration, using the last evaluated $\vphi_k$ 
as new initial guess. In Figure~\ref{fig:rectang2} we present the results 
corresponding to this restart strategy. For comparison purposes, we 
restarted the iteration after every 50 steps. Thus, to compute the results 
in Figure~\ref{fig:rectang2}, we had to evaluate 50, 100, 250 and 500 
iteration steps respectively.

\begin{figure}[b] \unitlength1cm
\begin{center}
\begin{picture}(14,4)
\centerline{
\epsfxsize3.3cm\epsfysize4cm \epsfbox{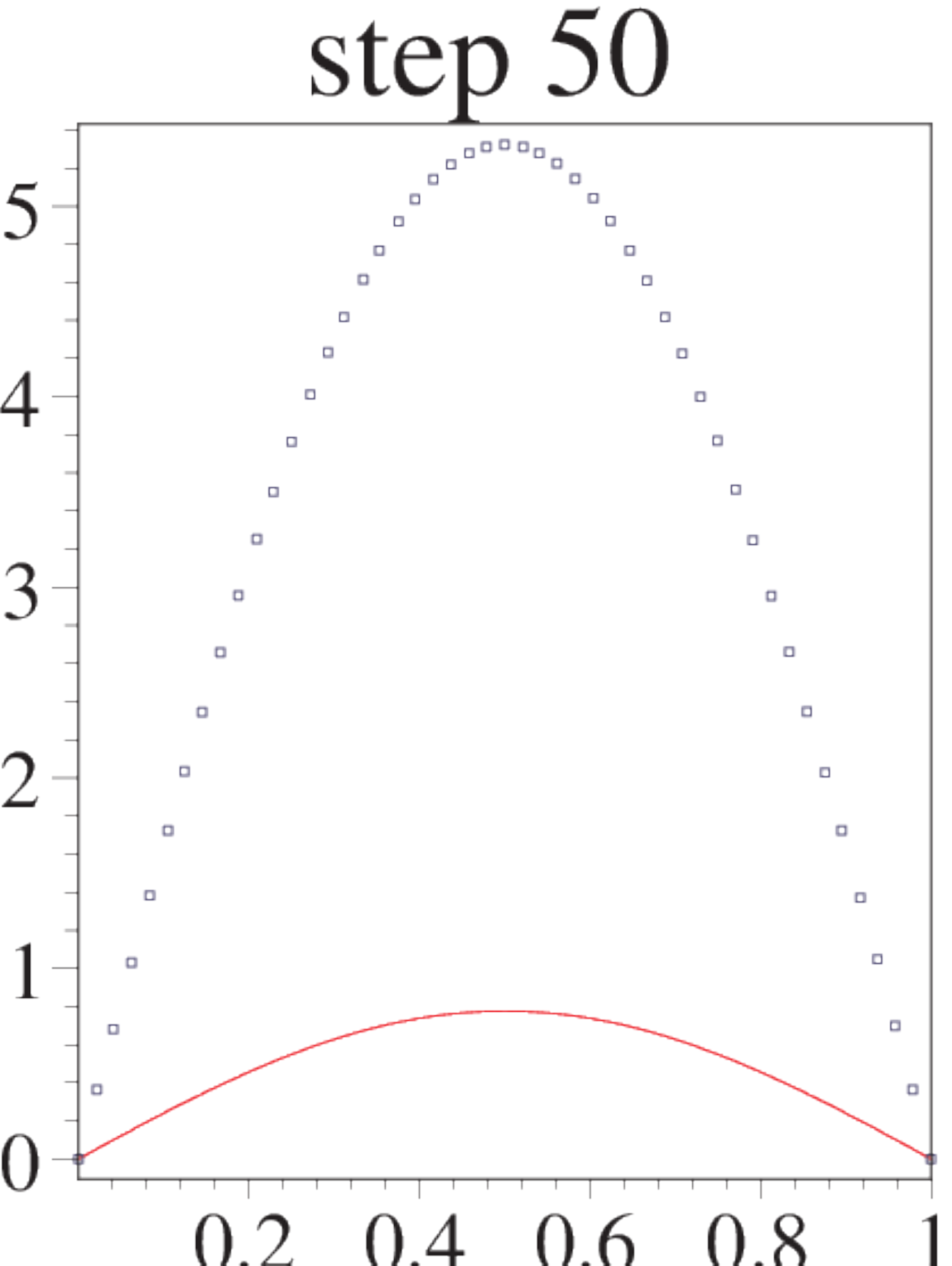}
\epsfxsize3.3cm\epsfysize4cm \epsfbox{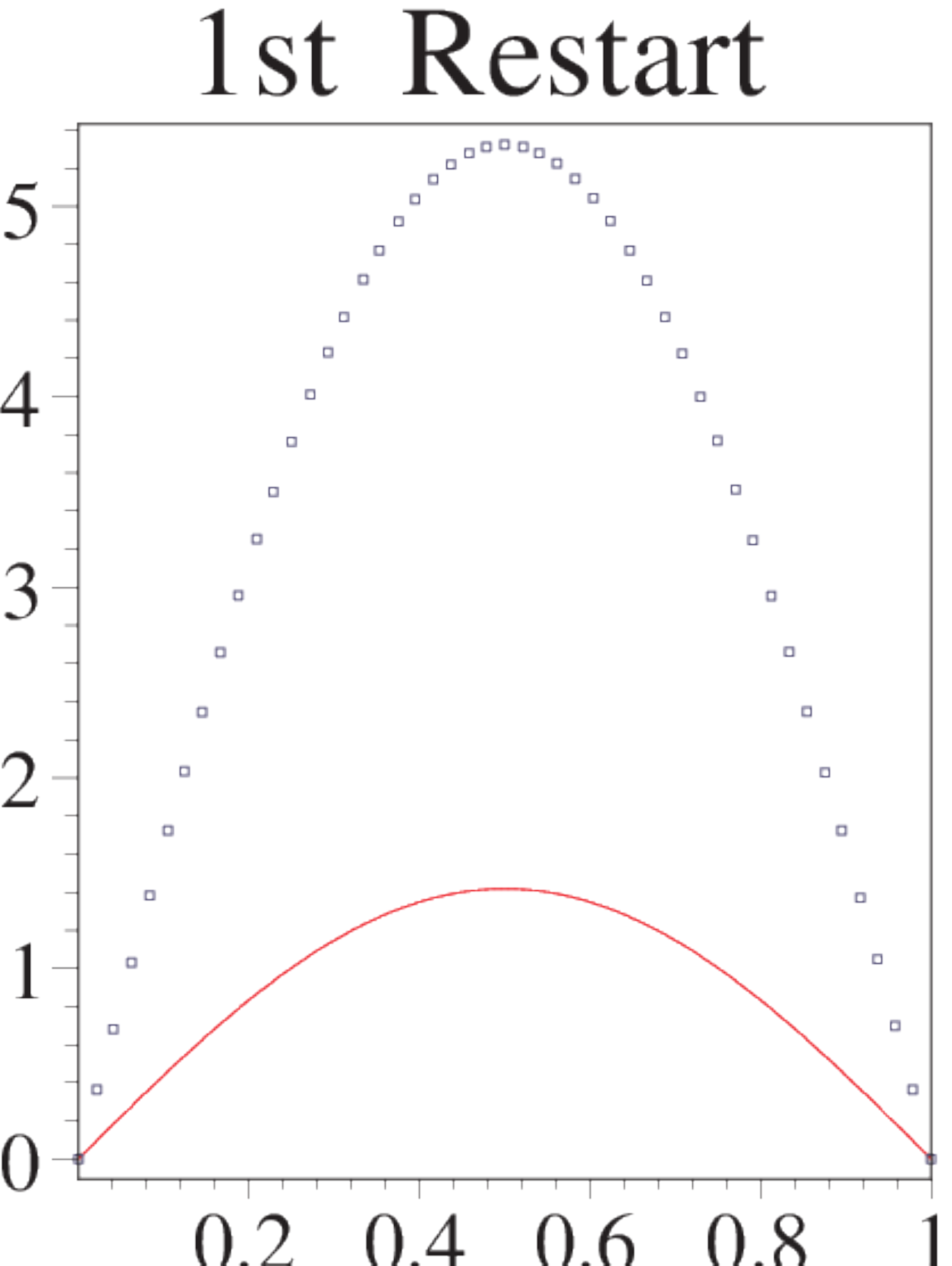}
\epsfxsize3.3cm\epsfysize4cm \epsfbox{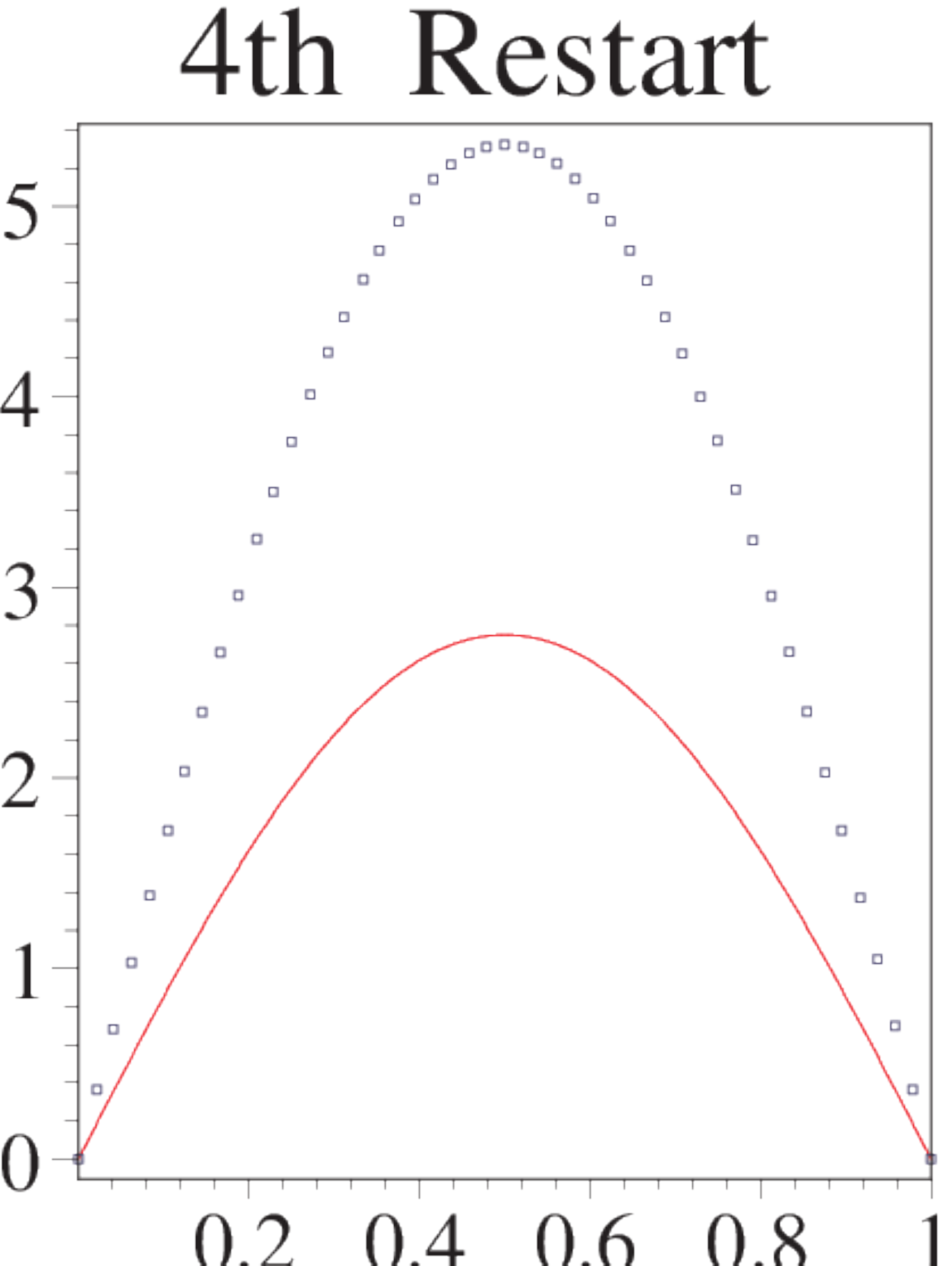}
\epsfxsize3.3cm\epsfysize4cm \epsfbox{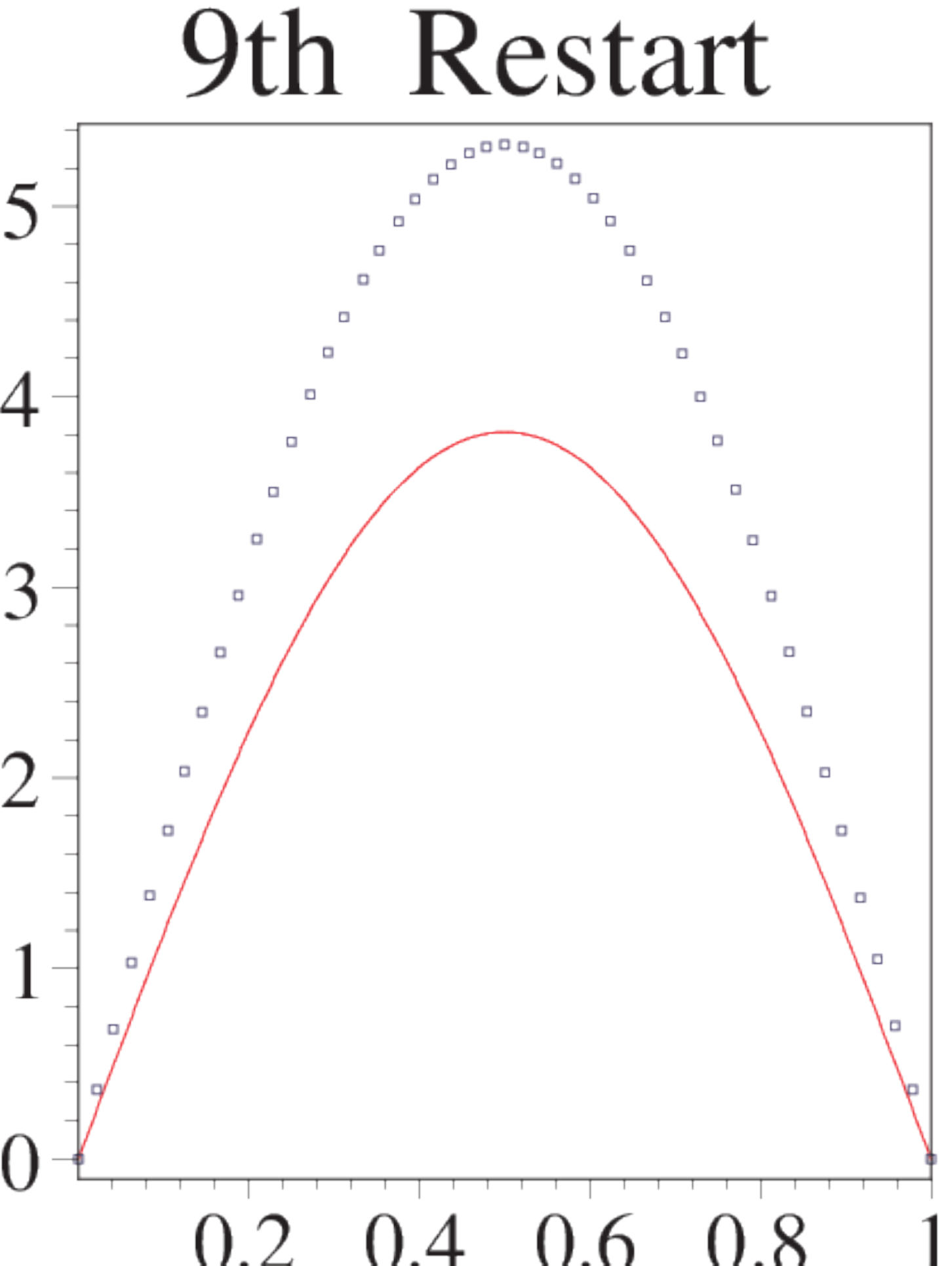} }
\end{picture}
\end{center} \vskip-0.6cm
\caption{Rectangular domain; iteration with restart strategy 
\label{fig:rectang2}}
\end{figure}

The restart strategy seams to save a considerable amount of computation 
effort, however we should quote that we have no analytical justification 
neither for the choice of the restart criterion nor for the improvement in the 
convergence rate. (Argumenting as in Section~\ref{ssec:regul}, one can 
verify that the sequence of residuals is again non-increasing.)

\subsection{A consistent problem in an annular domain} \label{ssec:num2}

Let $\Omega$ be the annulus centered at the origin with inner and outer 
radius respectively 1 and 3. We denote the inner and outer boundaries by 
$\Gamma_1$ and $\Gamma_2$ respectively. We consider the following Cauchy 
problem:
$$  \left\{ \begin{array}{ccl}
            \Delta \, u  & = & 0 \ ,\ \mbox{ in } \Omega \\
            u            & = & f \ ,\ \mbox{ at } \Gamma_1 \\
            u_{\nu}      & = & g \ ,\ \mbox{ at } \Gamma_1 \\
            \end{array} \right.  $$
Given the Cauchy data\, $f(\theta) = \sin(\theta) - \sin(2\theta)/2$\, and\, 
$g \equiv 0$\, at $\Gamma_1$, we want to reconstruct at $\Gamma_2$ the trace 
of $u$. It is easy check, that the solution of this Cauchy problem is given 
by\, $\bar{u}(r,\theta) = \frac{1}{2}(r+r^{-1}) \sin(\theta) - \frac{1}{4} 
(r^2+r^{-2}) \sin(2\theta)$.

The matrix $A$ is chosen as in Section~\ref{ssec:num1}. For the multi-grid 
method we used linear elements and a (uniform) mesh with 61\,824 nodes (512 
nodes on $\Gamma_2$). We use the same initial guess $\vphi_1 \equiv 0$ and the 
same stopping rule as in the previous example. In Figure~\ref{fig:annulus} we 
present the results corresponding to the Mann--Maz'ya iteration: the dotted 
line represents the exact solution and the solid line represents the sequence 
generated by the iteration $(0,A,T)$. (Note that in Figures~\ref{fig:annulus}, 
\ref{fig:noisy-data} and \ref{fig:noise1} the horizontal axis is parameterized 
from 0 to $2\pi$.)

\begin{figure}[h] \unitlength1cm
\begin{center}
\begin{picture}(14,4)
\centerline{
\epsfxsize3.3cm\epsfysize4cm \epsfbox{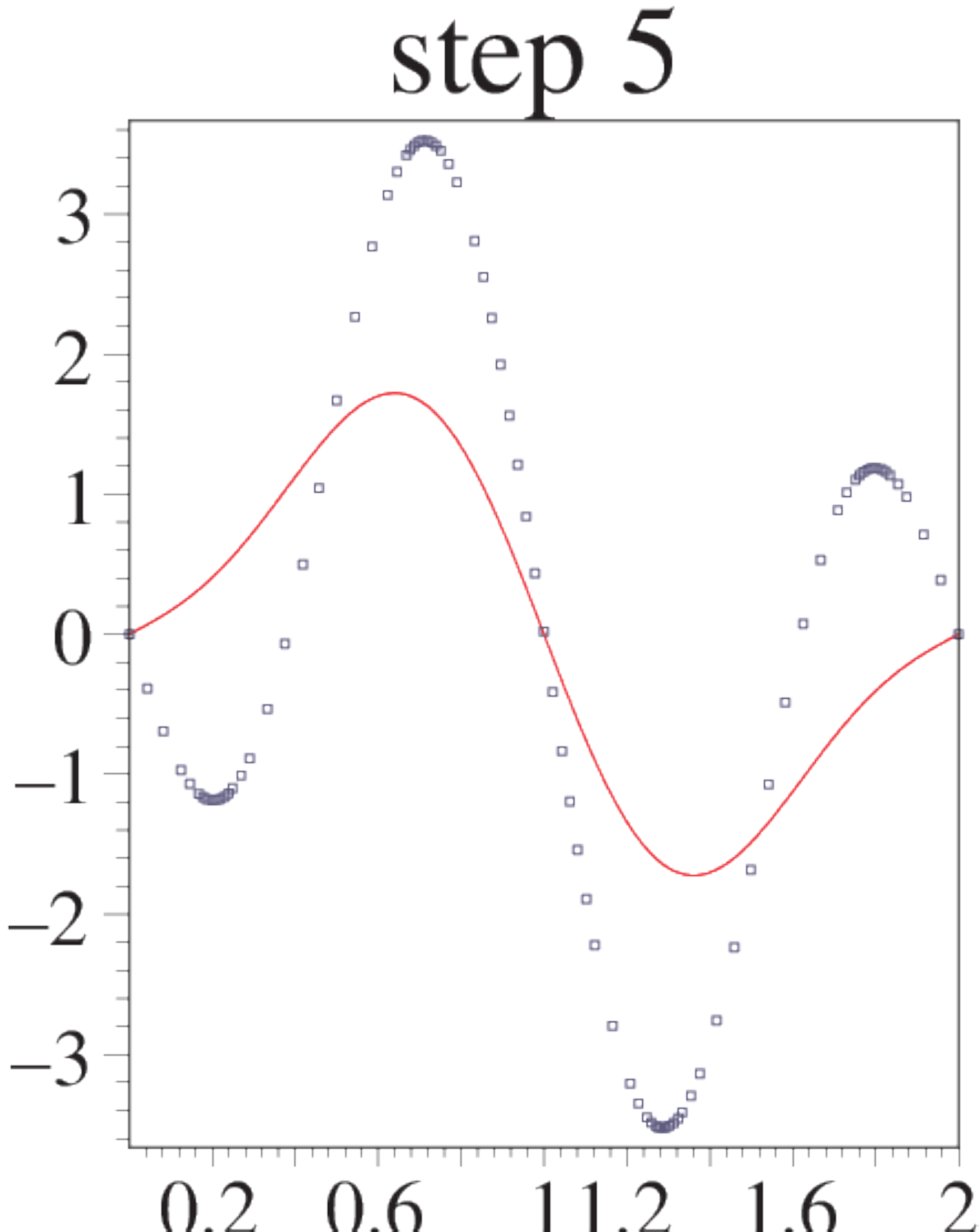}
\epsfxsize3.3cm\epsfysize4cm \epsfbox{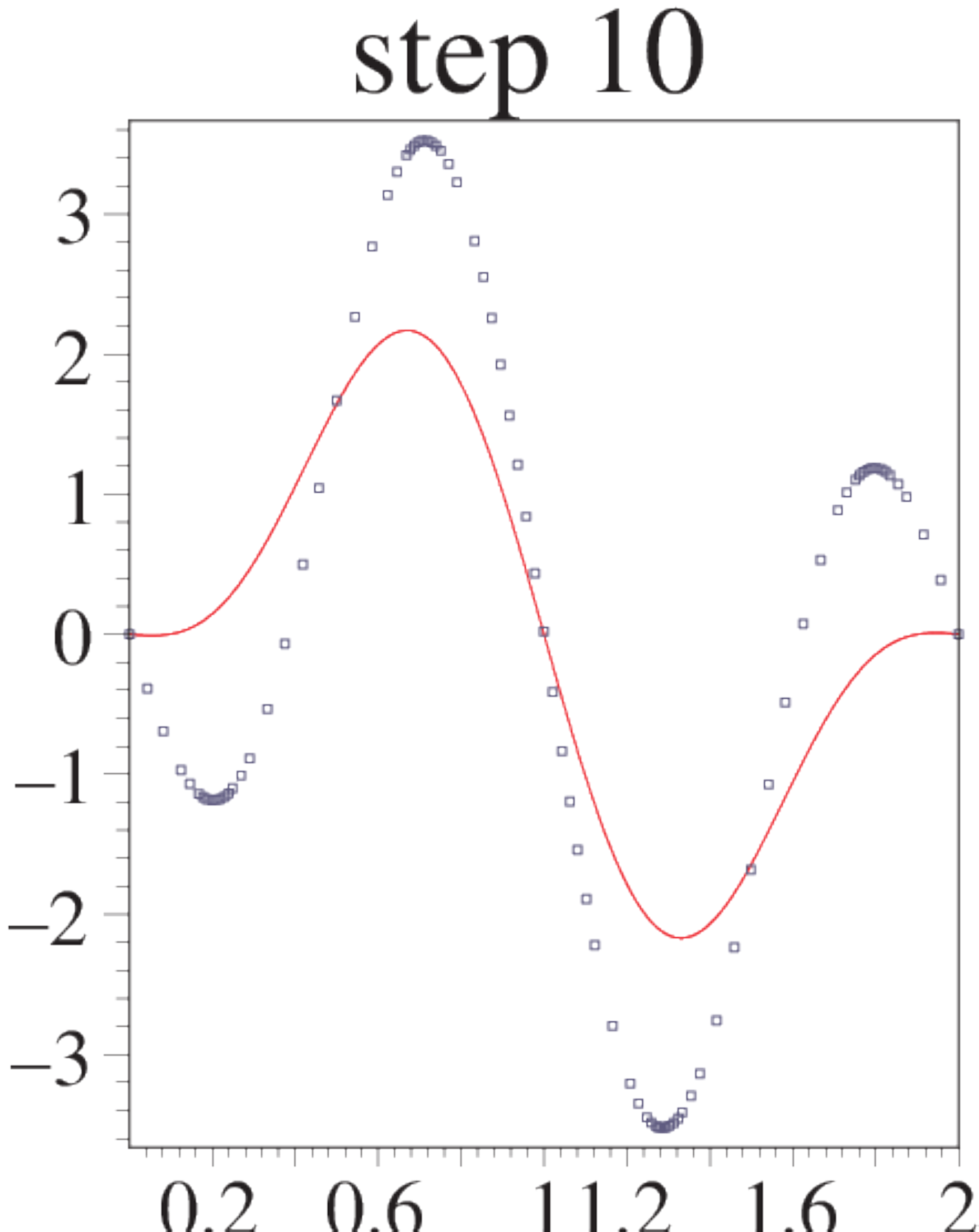}
\epsfxsize3.3cm\epsfysize4cm \epsfbox{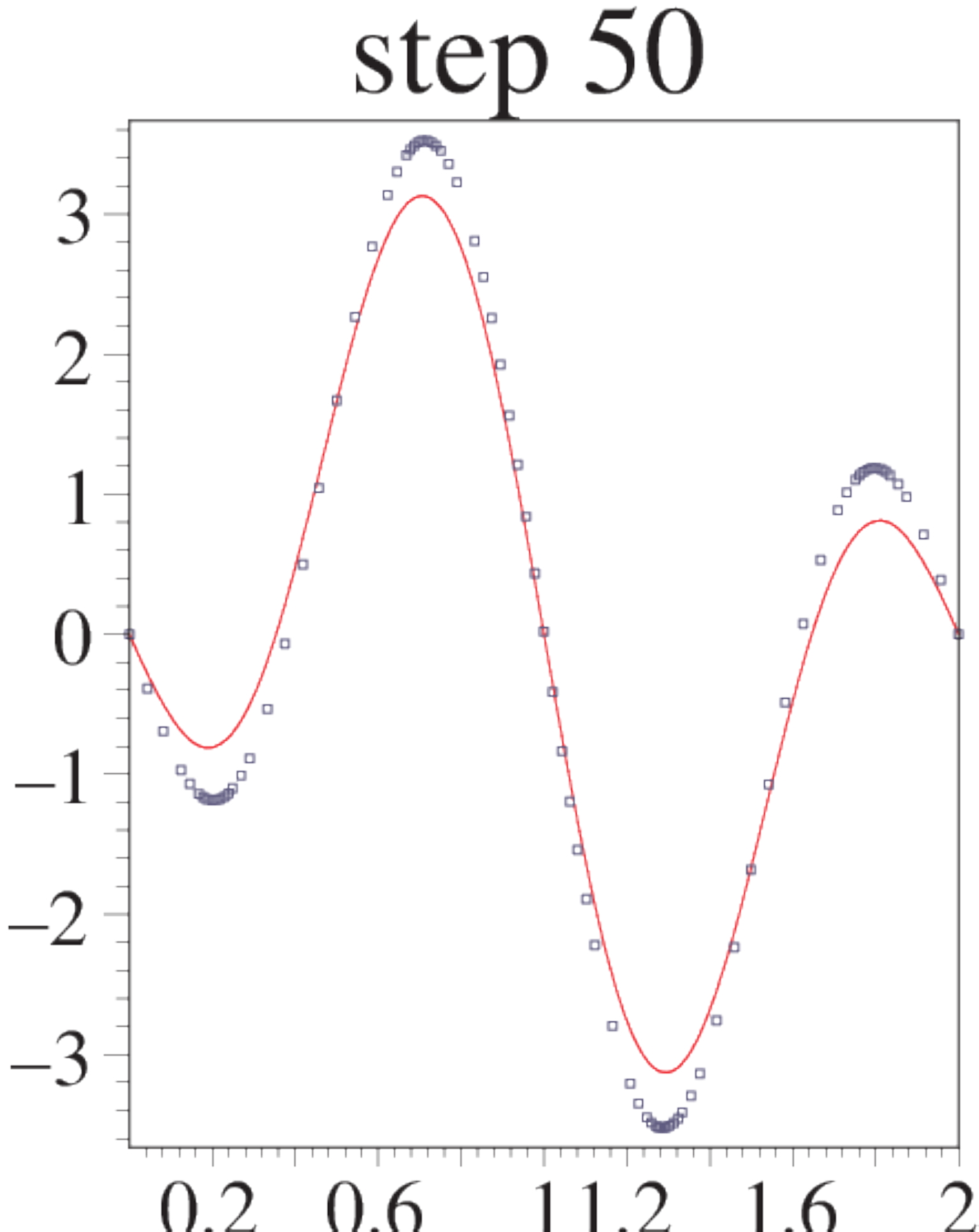}
\epsfxsize3.3cm\epsfysize4cm \epsfbox{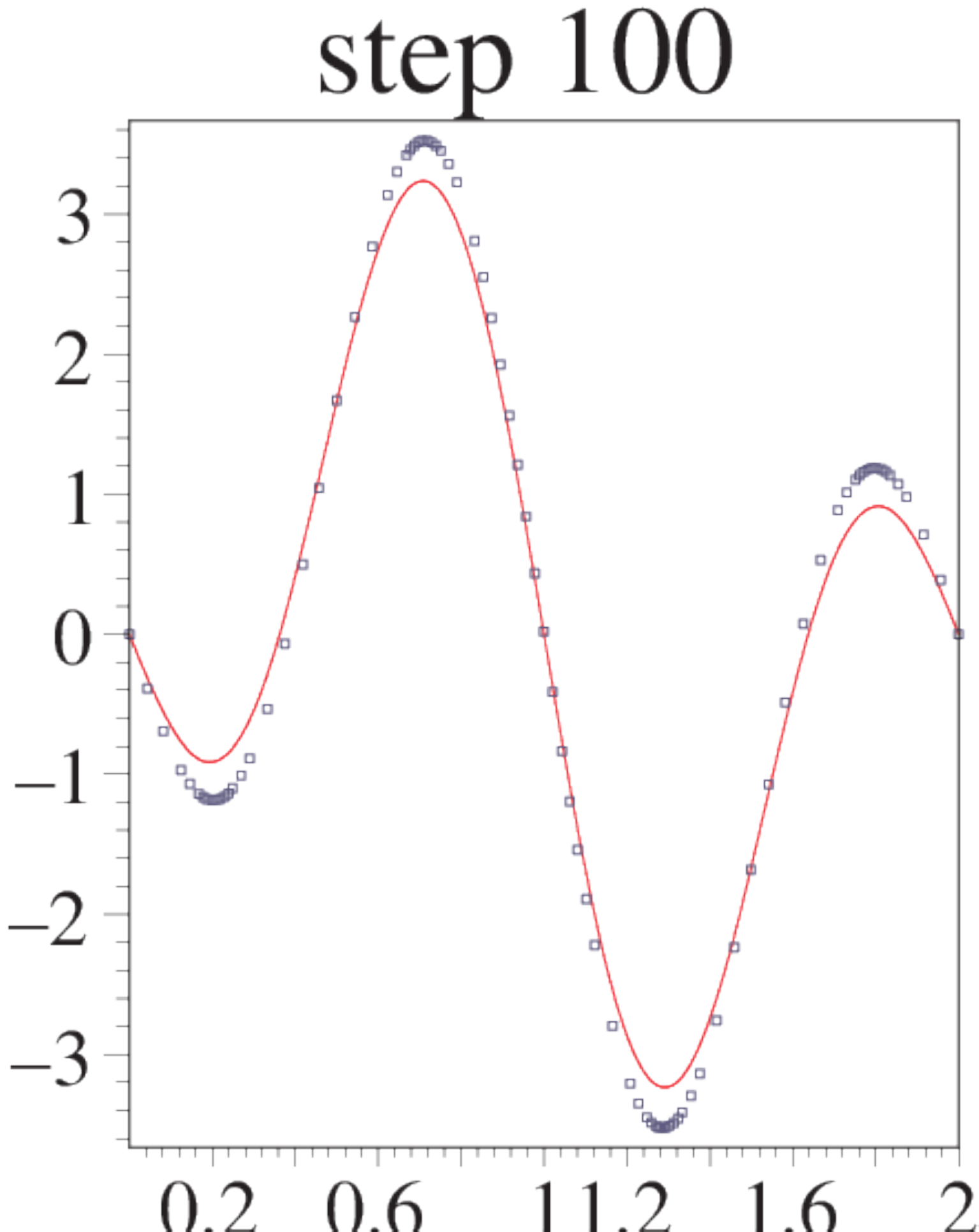} }
\end{picture}
\end{center} \vskip-0.6cm
\caption{Annular domain; iteration for consistent Cauchy data 
\label{fig:annulus}}
\end{figure}

\subsection{An inconsistent problem (noisy data)} \label{ssec:num3}

For this experiment we consider once more the Cauchy problem formulated 
in Section~\ref{ssec:num2}. The noisy data is obtained by inserting in the 
exact Cauchy data $(f,g) = (\sin(\theta)-\frac{1}{2}\sin(2\theta),\, 0)$ a 
perturbation of 5\%. In Figure~\ref{fig:noisy-data} we present the 
perturbations added to the Dirichlet and to the Neumann data.

The matrix $A$ is chosen as in Section~\ref{ssec:num1}. The initial guess, 
stopping rule and mesh level used for the computation are the same as those 
considered in that section. The the results corresponding to the 
Mann--Maz'ya iteration are presented in Figure~\ref{fig:noise1} (thick line).

\unitlength1mm
\begin{wrapfigure}{r}{8.3cm}
\begin{center} {\vspace{-1.2ex}
\begin{center}
\mbox{\epsfxsize4cm\epsfysize3cm \epsfbox{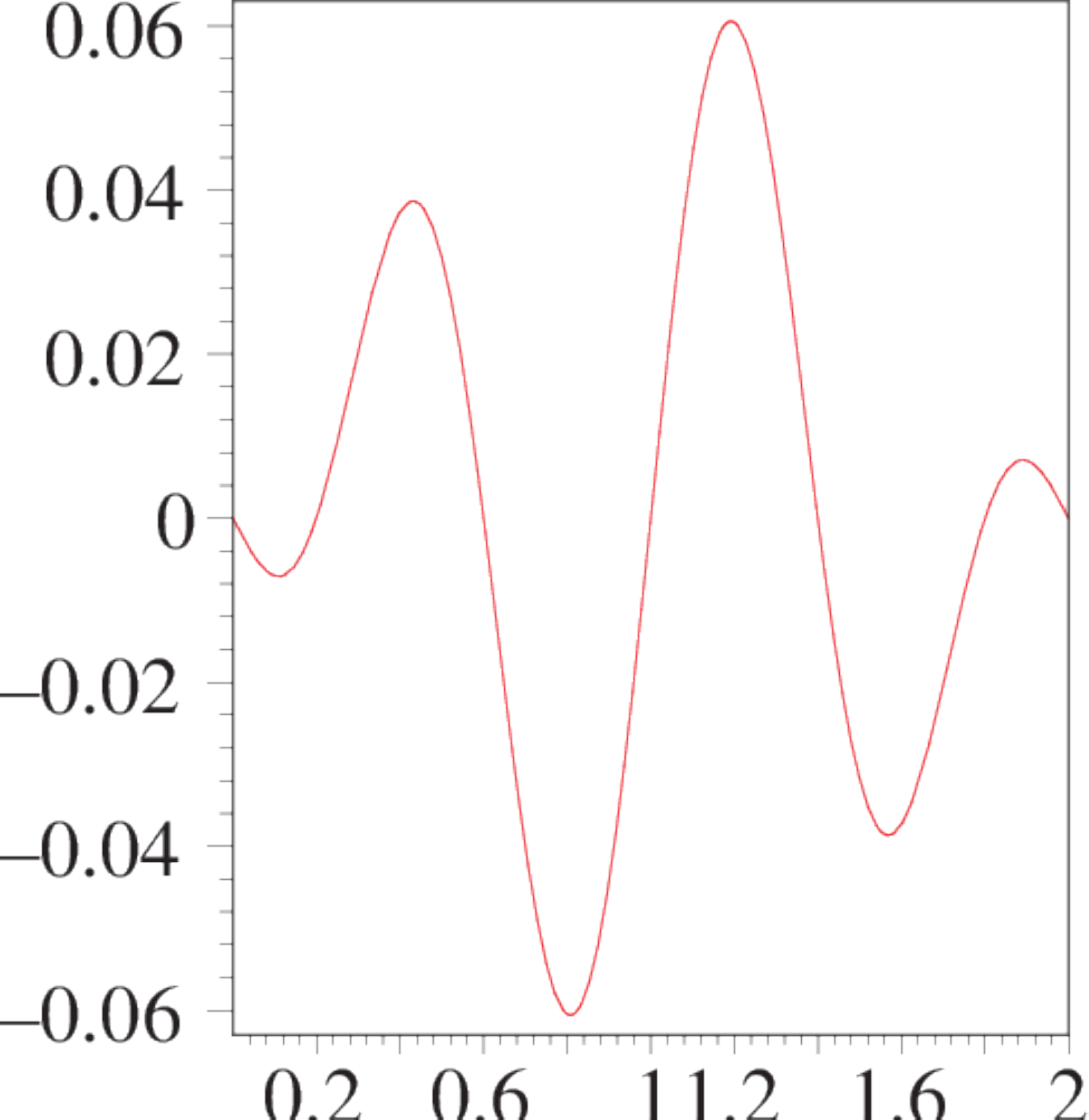}
      \epsfxsize4cm\epsfysize3cm \epsfbox{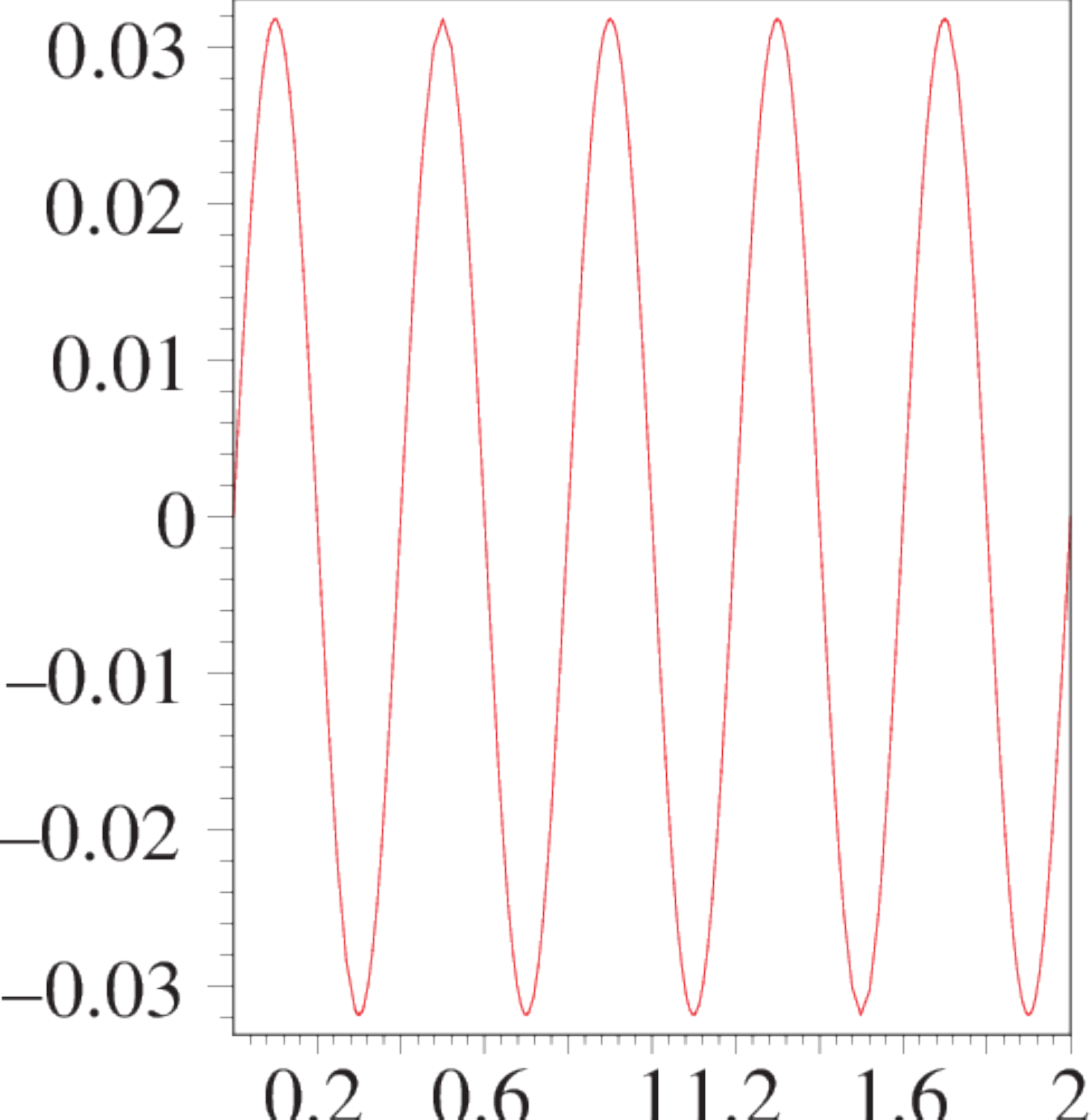} }
\end{center}
\centerline{\hspace{2cm} (a) \hspace{3.4cm} (b) \hfill \mbox{}}
\vspace{-0.5ex}
\caption{Generation of noisy data; (a) Perturbation added to the Dirichlet 
data; (b) Perturbation added to the Neumann data \label{fig:noisy-data} }
\vspace{-4ex}}
\end{center}
\end{wrapfigure}
As one can see in Figure~\ref{fig:noise1}, the performance of the numerical 
implementation of the Mann--Maz'ya iteration is stable. The high frequency 
components of the error start to interfere in the iteration only after an 
exponential number of steps. Indeed, in the Maz'ya algorithm the iteration 
error $e_k := \bar{\vphi} - \vphi_k^\eps$ satisfies $e_{k+1} = T_l \, e_k = 
T_l^k e_1$, and the eigenvalues of the fixed point operator $T_l$ converge 
exponentially to 1 (see Remark~\ref{rem:sc-interpr} for an example). Thus, 
the iteration reconstructs first the projection of the solution over the 
first eigenspace of the operator $T_l$; after an exponential number of steps, 
it reconstructs also the projection over the second eigenspace; and so on.

The high frequency components of the error are exponentially amplified and 
destroy completely the approximation if we iterate long enough. However, due 
to the characteristic explained above, one has to evaluate an exponential 
number of steps in order to observe the influence of the {\em bad} frequencies.

\begin{figure}[b] \unitlength1cm
\begin{center}
\begin{picture}(14,4)
\centerline{
\epsfxsize3.3cm\epsfysize4cm \epsfbox{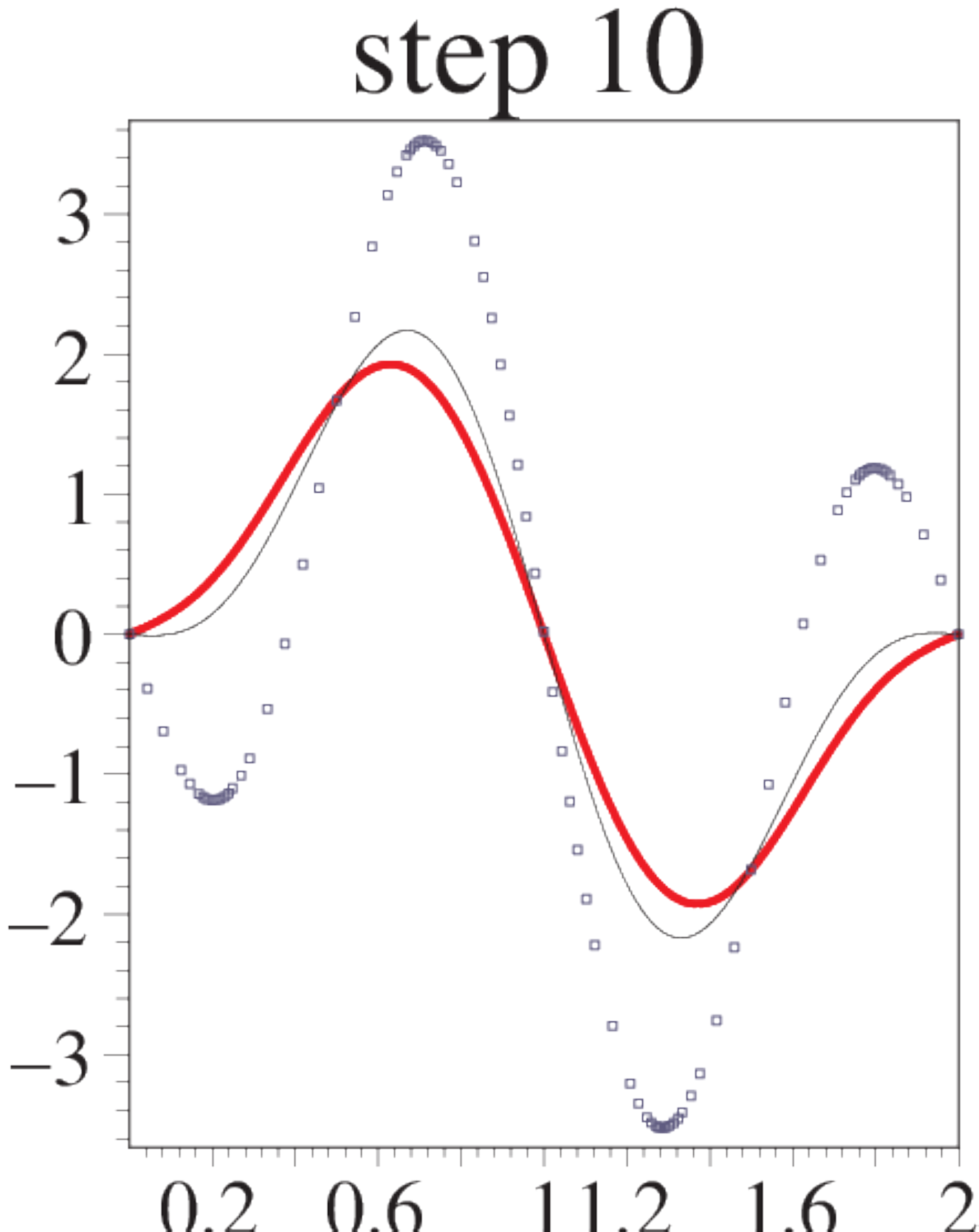}
\hskip1.6cm
\epsfxsize3.3cm\epsfysize4cm \epsfbox{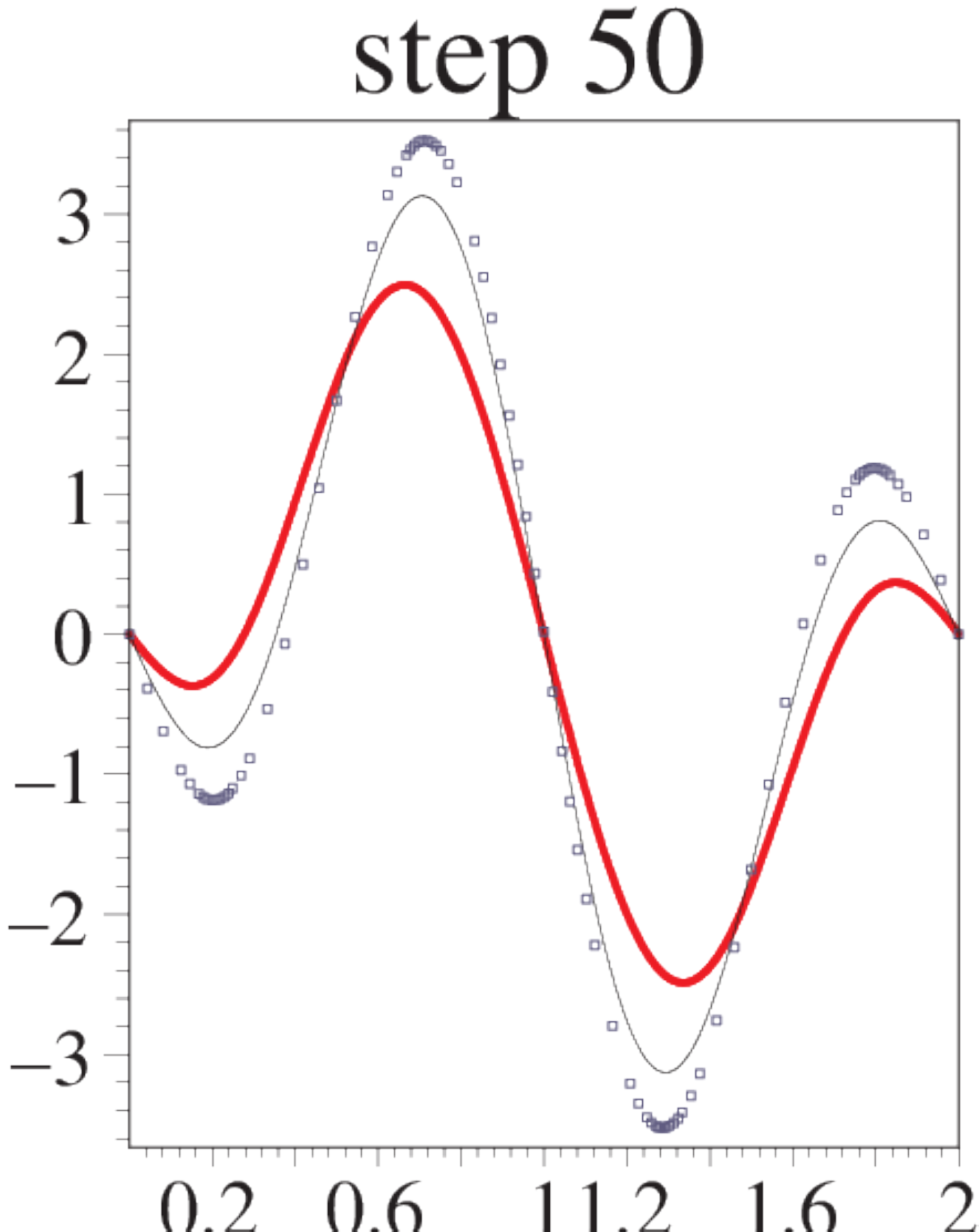}
\hskip1.6cm
\epsfxsize3.3cm\epsfysize4cm \epsfbox{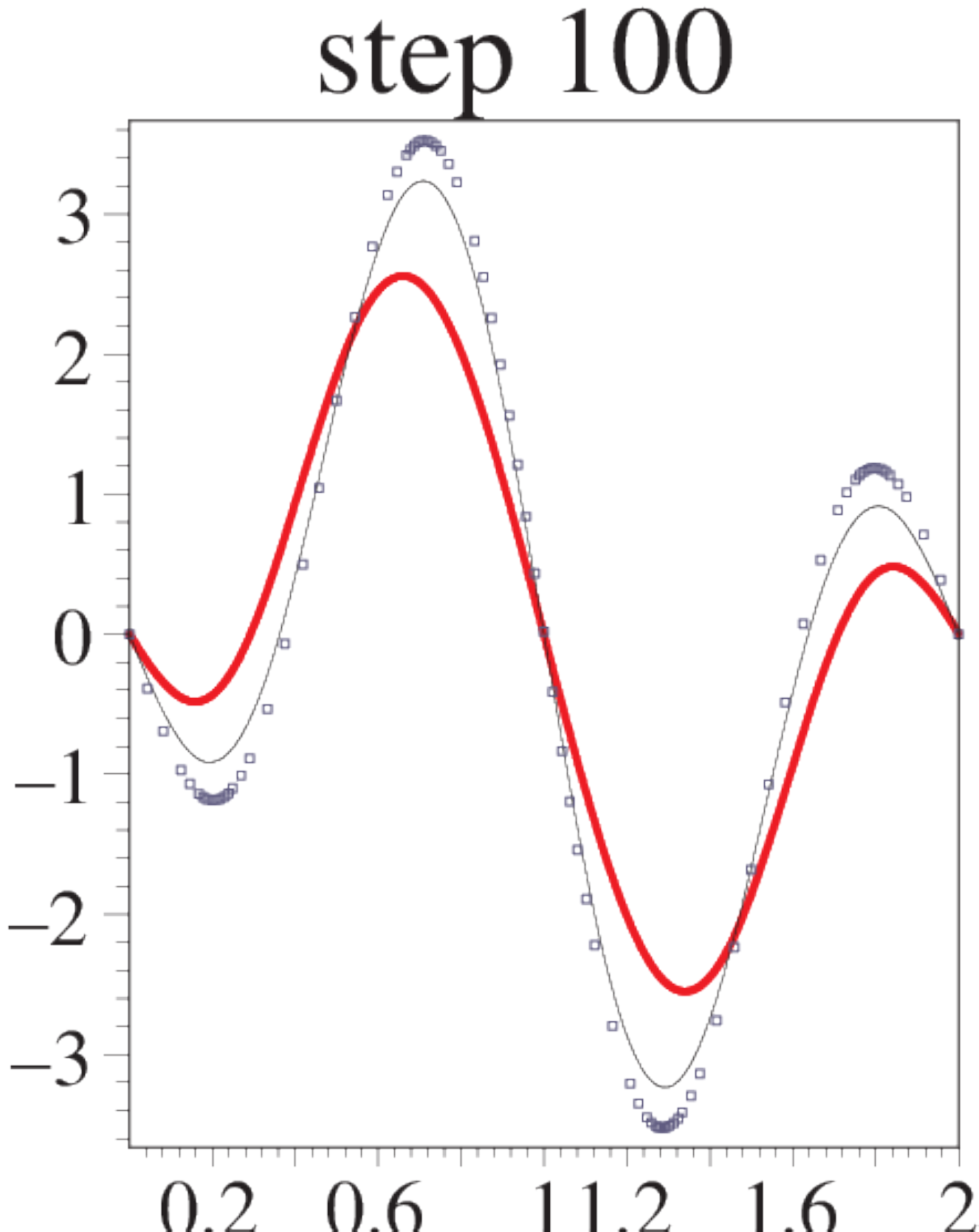} }
\end{picture}
\end{center} \vskip-0.6cm
\caption{Iteration for noisy data in the annular domain; dotted-line: exact 
solution; thin-line: iteration for exact data; thick-line: iteration for 
noisy data \label{fig:noise1} }
\end{figure}
%
%
%
\section{Final remarks} \label{sec:concl}

Let us suppose $\partial\Omega = \Gamma_1 \cup \Gamma_2 \cup \Gamma_3$ and 
we want to analyze a Cauchy problem with data given at $\Gamma_1$ plus some 
further boundary condition (Neumann, Dirichlet, $\dots$) at $\Gamma_3$ (this 
is precisely the situation in Section~\ref{ssec:num1}). It is still possible 
to use the Maz'ya method for such problems. All we have to do is to adapt 
the Maz'ya iteration by adding this extra boundary condition at $\Gamma_3$ 
to both mixed boundary value problems at each iteration step. This 
over-determination of boundary data does not affect the analysis of the 
Maz'ya algorithm (see \cite{Le}). Consequently, it also does not also affect 
the analysis of the Mann--Maz'ya iteration presented here.

The Maz'ya iterative method (see Section~\ref{ssec:mazya}) generates a 
sequence of Neumann traces, which approximate the unknown Neumann boundary 
condition $u_{\nuA|_{\Gamma_2}}$. Analogously, we can define an iterative 
method, which produces a sequence of Dirichlet traces (this was already 
suggested by Maz'ya et al. in \cite{KMF}). The convergence proof for this 
iteration is quite similar to the one presented in \cite{Le} for the Maz'ya 
iteration. It is also possible to combine this iteration with the Mann 
strategy. All the results formulated here for the Mann--Maz'ya method 
remain valid for this iteration.

The approach followed in \cite{Le} to characterize the solution of elliptic 
Cauchy problems as a solution of a fixed point equation can be extended, 
using spectral theory, to differential operators of hyperbolic and parabolic 
types (see \cite{BaLe}). The formulation of the Mann iteration for this 
problems follow the lines discussed here.
%
%
%
\appendix

\section*{Appendix}
\setcounter{section}{1}

\begin{applemma} \label{lem:app1}
Let $p > 0$ and $k \in \N_0$. Define the real-valued function 
$\hat{f}(\lbd) := (1 - \lbd)^k \big( \ln \frac{\exp(1)}{\lbd} \big)^{-p}$, 
$\lbd \in [0,1]$. Then we have
$$  \hat{f}(\lbd) \ \le \ C\, (\ln\, k)^{-p}\, ,\ \lbd \in [0,1]\, , $$
with $C$ independent of $k$. Moreover, for each $p \in \R$, the real-valued 
function $\hat{g}(\lbd) := (1 - \lbd)^k \lbd \big( \ln \frac{\exp(1)}{\lbd} 
\big)^{-p}$ defined on $[0,1]$, satisfies
$$  \hat{g}(\lbd) \ \le \ C\, k^{-1} (\ln\, k)^{-p}\, ,\ \lbd \in [0,1]\, , $$
with $C$ independent of $k$.
\end{applemma}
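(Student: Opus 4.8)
The plan is to treat both inequalities by the same balancing argument, reducing everything to the competition between the factor $(1-\lbd)^k$, which is small when $\lbd$ is bounded away from $0$, and the factor $\big(\ln\frac{\exp(1)}{\lbd}\big)^{-p}$, which is small precisely when $\lbd$ is small. Writing $t := \ln\frac{\exp(1)}{\lbd} = 1 - \ln\lbd$, one has $t \ge 1$ on $(0,1]$, with $t$ large exactly where $\lbd$ is small; the point is that the product is small at both ends, and that its only possible plateau, of height $\sim (\ln k)^{-p}$, sits near $\lbd \sim (k\ln k)^{-1}$, where $(1-\lbd)^k \approx 1$ while $t \approx \ln k$.

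First I would establish the bound for $\hat f$. I split $[0,1]$ at the point $\lbd_0 := \exp(1)\, k^{-1/2}$, which is the value where $t = \tfrac12 \ln k$. On $(0,\lbd_0]$ one has $\ln\frac{\exp(1)}{\lbd} \ge \tfrac12 \ln k$, so the logarithmic factor is at most $(\tfrac12\ln k)^{-p} = 2^p (\ln k)^{-p}$, while $(1-\lbd)^k \le 1$; this already gives $\hat f(\lbd) \le 2^p (\ln k)^{-p}$ there. On the remaining interval $(\lbd_0,1]$ (empty unless $k>\exp(2)$) I would use the monotonicity of $x\mapsto x^k$ together with $1-x \le \exp(-x)$ to estimate $(1-\lbd)^k \le (1-\lbd_0)^k \le \exp(-\exp(1)\sqrt k)$, and bound the logarithmic factor trivially by $1$ (since $t\ge 1$). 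Hence $\hat f(\lbd) \le \exp(-\exp(1)\sqrt k)$ on that piece, and because $\exp(-\exp(1)\sqrt k)\,(\ln k)^p \to 0$, this is $\le C (\ln k)^{-p}$ with $C$ independent of $k$. Combining the two pieces gives the first assertion.

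For $\hat g$ I would run the identical split, using in addition the elementary one-variable bound $\sup_{\lbd\in[0,1]} (1-\lbd)^k \lbd \le k^{-1}$ (the maximum is attained at $\lbd = 1/(k+1)$ and equals $(k/(k+1))^k (k+1)^{-1} \le (k+1)^{-1}$). On $(0,\lbd_0]$ the logarithmic factor is again $\le 2^p(\ln k)^{-p}$ and the product $(1-\lbd)^k\lbd$ is $\le k^{-1}$, so $\hat g(\lbd) \le 2^p k^{-1}(\ln k)^{-p}$; on $(\lbd_0,1]$ the factor $(1-\lbd)^k$ again contributes $\exp(-\exp(1)\sqrt k)$, which dominates $k^{-1}(\ln k)^{-p}$, so $\hat g(\lbd) \le C k^{-1}(\ln k)^{-p}$ there as well. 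This proves the second assertion for $p>0$; the case $p\le 0$ (admitted in the statement of $\hat g$) is handled by the same scheme, but splitting instead at $\lbd = 1/k$, where the then-increasing map $\lbd\mapsto\lbd\,\big(\ln\frac{\exp(1)}{\lbd}\big)^{-p}$ is controlled by its endpoint value $k^{-1}(1+\ln k)^{-p}$.

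The step I expect to be the crux is the choice of the split point. It must be placed so that the region in which the decay has to come from $(1-\lbd)^k$ sits at $\lbd$ of order $k^{-1/2}$, producing an exponential factor $\exp(-\exp(1)\sqrt k)$ that beats every power of $(\ln k)^{-1}$, while the complementary region is near $\lbd=0$, where the crude bound $\big(\ln\frac{\exp(1)}{\lbd}\big)^{-p}\le(\tfrac12\ln k)^{-p}$ already exhibits the desired factor $(\ln k)^{-p}$. A slower split (e.g. at $\lbd=1/k$) would leave a residual $O(1/k)$ or $O(1)$ term that is not $o((\ln k)^{-p})$, so the $k^{-1/2}$ scaling in the exponent is exactly what renders the second region negligible; verifying $\exp(-\exp(1)\sqrt k)(\ln k)^{p}\to 0$ and absorbing the finitely many small $k$ (where $\ln k$ is not bounded away from $0$) into the constant are the only remaining routine points.
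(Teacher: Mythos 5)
Your argument is essentially correct for the substantive cases, and it is a genuinely different route from the paper's: the paper does not prove this lemma at all, but simply cites \cite[Lemma~A.1]{DES} for the first assertion and declares the second one ``analogous''. Your proof is a self-contained replacement. The split of $[0,1]$ at $\lbd_0 = \exp(1)\,k^{-1/2}$, with the crude bound $\big(\ln\frac{\exp(1)}{\lbd}\big)^{-p} \le 2^p(\ln k)^{-p}$ on $(0,\lbd_0]$ and the decay $(1-\lbd)^k \le (1-\lbd_0)^k \le \exp(-\exp(1)\sqrt{k})$ on $(\lbd_0,1]$, is sound and beats every power of $(\ln k)^{-1}$, so both assertions follow for $p>0$ and $k\ge 2$ (the values $k=0,1$, where $(\ln k)^{-p}$ degenerates, are a defect of the statement rather than of your proof); the elementary bound $(1-\lbd)^k\lbd \le (k+1)^{-1}$ is exactly the right extra ingredient for $\hat g$. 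What your route buys is independence from \cite{DES}; what the paper's citation buys is brevity.

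One spot does need repair: your one-sentence treatment of $p\le 0$. With the split at $\lbd = 1/k$, the right piece $(1/k,1]$ cannot be handled by ``the same scheme'': there $(1-\lbd)^k \le (1-1/k)^k$ is only bounded by a constant (roughly $\exp(-1)$), and $\lbd\big(\ln\frac{\exp(1)}{\lbd}\big)^{|p|}$ is of order $1$ near $\lbd=1$, so that scheme yields only $O(1)$, not $O\big(k^{-1}(\ln k)^{-p}\big)$. The correct combination on $(1/k,1]$ is instead: since $-p=|p|\ge 0$, the factor $\big(\ln\frac{\exp(1)}{\lbd}\big)^{|p|}$ is decreasing in $\lbd$, hence $\le (1+\ln k)^{|p|}$, while $(1-\lbd)^k\lbd \le k^{-1}$; multiplying gives the claim. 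On $(0,1/k]$ your endpoint-monotonicity argument does work, but note that $\lbd \mapsto \lbd\big(\ln\frac{\exp(1)}{\lbd}\big)^{|p|}$ is increasing only for $\lbd \le \exp(1-|p|)$, so this requires $k \ge \exp(|p|-1)$; the finitely many remaining $k$ are absorbed into $C$, as you say. These are small fixes, using only ingredients you already introduced.
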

\begin{proof}
The first assertion is proved in \cite[Lemma~A.1]{DES}. The second assertion 
is quite similar to the second part of the lemma cited above and can be 
proved with an analogous argumentation.
\end{proof}

\begin{applemma} \label{lem:app2}
Let $p > 0$, $\cal H$ a Hilbert space, $P: {\cal H} \to {\cal H}$ a positive 
linear self adjoint non-expansive operator, $f$ the real-valued function 
defined by
$$ f(\lbd) \ := \ 
   \begin{cases}
     \Big( \ln (\exp(1) \lbd^{-1}) \Big)^{-p} &\!\!\!\!\!\!,\ 
     \lbd \in (0,1] \\
     \ \ \ \ \ \ \ \ \ \ \ 0 &\!\!\!\!\!\!,\ \lbd = 0 .
   \end{cases} $$
Let $e_1 := f(P) \psi$, for some $\psi \in {\cal H}$. Then for any $k > 1$
\begin{eqnarray*}
&&\| (I - P)^k e_1 \| \ \le \ C\, \| \psi \|\, \big(\ln(k+2)\big)^{-p} \\[1ex]
&&\| (I - P)^k P e_1 \| \ \le \ C\, \| \psi \|\, k^{-1} \big(\ln(k+2)\big)^{-p}
\end{eqnarray*}
with $C$ independent of $k$.
\end{applemma}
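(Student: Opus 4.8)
The plan is to reduce both inequalities to scalar supremum estimates through the spectral theorem, and then to quote Lemma~\ref{lem:app1} to control the two scalar functions that arise. The genuine analytic content of those estimates is already carried by Lemma~\ref{lem:app1}, so the work here is essentially the passage from functions to operators.

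First I would record the spectral setup. Since $P$ is positive, self-adjoint and non-expansive, its spectrum satisfies $\sigma(P) \subseteq [0,1]$: positivity gives $\sigma(P) \subseteq [0,\infty)$, while $\|P\| \le 1$ together with self-adjointness (so that the operator norm equals the spectral radius) gives $\sigma(P) \subseteq [-1,1]$. Writing the spectral resolution $P = \int_0^1 \lbd\, dE_\lbd$, the continuous functional calculus yields, for any $h \in C([0,1])$ and any $\eta \in \ha$, the bound $\|h(P)\eta\|^2 = \int_0^1 |h(\lbd)|^2\, d\|E_\lbd \eta\|^2 \le \big(\sup_{\lbd\in[0,1]}|h(\lbd)|\big)^2 \|\eta\|^2$.

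Next I would identify the two operators in the statement as functions of $P$. With $e_1 = f(P)\psi$ I have $(I-P)^k e_1 = \hat{f}(P)\psi$ and $(I-P)^k P e_1 = \hat{g}(P)\psi$, where $\hat{f}(\lbd) = (1-\lbd)^k \big(\ln(\exp(1)/\lbd)\big)^{-p}$ and $\hat{g}(\lbd) = (1-\lbd)^k \lbd \big(\ln(\exp(1)/\lbd)\big)^{-p}$ are exactly the scalar functions appearing in Lemma~\ref{lem:app1}. Both extend continuously to $\lbd = 0$ with value $0$, since the logarithmic factor diverges while the polynomial prefactor stays bounded, consistent with the convention $f(0) = 0$; hence $\hat f, \hat g \in C([0,1])$ and the functional calculus applies. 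Combining with the previous step, $\|(I-P)^k e_1\| \le \|\psi\|\,\sup_{[0,1]}\hat{f}$ and $\|(I-P)^k P e_1\| \le \|\psi\|\,\sup_{[0,1]}\hat{g}$, and Lemma~\ref{lem:app1} bounds these suprema by $C(\ln k)^{-p}$ and $C\,k^{-1}(\ln k)^{-p}$ respectively, with $C$ independent of $k$.

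Finally I would perform the cosmetic replacement of $(\ln k)^{-p}$ by $(\ln(k+2))^{-p}$: for $k \ge 2$ one has $\ln(k+2) \le 2\ln k$, so $(\ln k)^{-p} \le 2^p (\ln(k+2))^{-p}$, and the extra factor is absorbed into the constant. I do not anticipate a real obstacle, since all the delicate scalar analysis is supplied by Lemma~\ref{lem:app1}; the only point requiring care is verifying that $\hat f$ and $\hat g$ are admissible for the continuous functional calculus, i.e.\ that they are continuous up to $\lbd = 0$, which legitimises the transition from the operator norm to the scalar supremum.
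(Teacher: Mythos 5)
Your proof is correct and takes essentially the same route as the paper: the paper's own proof just invokes Lemma~\ref{lem:app1} and the analogy with \cite[Lemma~2.6]{DES}, whose underlying mechanism is precisely the spectral (functional-calculus) reduction of the operator bounds to the scalar suprema that you carry out. You merely make explicit the details the paper leaves implicit, namely the continuity of $\hat f$ and $\hat g$ at $\lbd=0$ and the harmless replacement of $(\ln k)^{-p}$ by $(\ln(k+2))^{-p}$.
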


\begin{proof} 
This lemma is analogue to the result stated in \cite[Lemma~2.6]{DES}. It 
follows from Lemma~\ref{lem:app1}, the same way as \cite[Lemma~2.6]{DES} 
follows form \cite[Lemma~A.1]{DES}.
\end{proof}

\begin{applemma} \label{lem:app3}
Given $p > 1$, then
$$ \int_0^1 \hat{h} \big( (1-\ln(\lbd))^{-2p} \big)\, d\lbd \ \ge \ 
   \hat{h} \Big( \int_0^1 (1-\ln(\lbd))^{-2p}\, d\lbd \Big) , $$
with $\hat{h}(t) := \exp(-t^\frac{-1}{2p})^2\, t$.
\end{applemma}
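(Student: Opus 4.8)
The plan is to recognize the stated inequality as an instance of Jensen's inequality. Since $\int_0^1 d\lbd = 1$, Lebesgue measure on $[0,1]$ is a probability measure, and with $g(\lbd) := (1-\ln\lbd)^{-2p}$ the assertion
$$ \int_0^1 \hat{h}(g(\lbd))\, d\lbd \ \ge \ \hat{h}\Big( \int_0^1 g(\lbd)\, d\lbd \Big) $$
is exactly the convexity inequality for averaging against $d\lbd$. First I would check that $g$ maps $(0,1]$ into $(0,1]$: from $1-\ln\lbd \ge 1$ one gets $0 < g(\lbd) \le 1$, with $g(\lbd)\to 0$ as $\lbd\to 0^+$ and $g(1)=1$. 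Hence both $g(\lbd)$ and the mean $\int_0^1 g\, d\lbd$ lie in $(0,1]$, so it suffices to establish convexity of $\hat{h}$ on that interval.

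The key step is the convexity of $\hat{h}$. Writing $q := 1/(2p)$, so that $\hat{h}(t) = t\, \exp(-2 t^{-q})$ with $0 < q < \frac{1}{2}$ by the hypothesis $p>1$, a direct computation gives
$$ \hat{h}'(t) \ = \ \exp(-2t^{-q})\, \big( 1 + 2q\, t^{-q} \big) $$
and
$$ \hat{h}''(t) \ = \ 2q\, t^{-q-1}\, \exp(-2t^{-q})\, \big( 1 - q + 2q\, t^{-q} \big) . $$
For $t > 0$ and $0 < q < 1$ every factor on the right is strictly positive, so $\hat{h}'' > 0$ and $\hat{h}$ is (strictly) convex on $(0,\infty)$, in particular on $(0,1]$. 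Note that convexity requires only $q < 1$, i.e. $p > \frac{1}{2}$, so the hypothesis $p > 1$ is comfortably sufficient.

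Finally I would apply Jensen's inequality to the convex function $\hat{h}$ and the probability measure $d\lbd$ on $[0,1]$, which yields the claim at once. The integrals involved are finite: both $g$ and $\hat{h}\circ g$ are bounded by $1$ on $(0,1]$ (using $\exp(-2t^{-q}) \le 1$), so no integrability obstruction arises, and $\int_0^1 g\, d\lbd \in (0,1]$ lies in the domain of convexity. The only genuine work is the second-derivative computation establishing the sign of $\hat{h}''$; once that is settled, Jensen closes the argument immediately. I expect the sign analysis of $\hat{h}''$ to be the main (though routine) obstacle, since it is where the parameter range $p>1$ must be used.
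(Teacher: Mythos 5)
Your proposal is correct and follows essentially the same route as the paper: the paper's proof also consists of observing that $d\lbd$ is a probability measure on $[0,1]$ and invoking Jensen's inequality together with the convexity of $\hat{h}$, except that the paper delegates the convexity of $\hat{h}$ on $[0,\infty)$ to a citation (\cite[Lemma~A.2]{DES}) whereas you verify it directly via the sign of $\hat{h}''$. Your second-derivative computation is accurate, and your remark that convexity on the relevant range needs only $q = 1/(2p) < 1$ is a harmless strengthening of the stated hypothesis $p>1$.
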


\begin{proof}
The assertion follows from the convexity of $\hat{h}$ in $[0,\infty)$ and 
Jensen's inequality. (see \cite[Lemma~A.2]{DES})
\end{proof}

\begin{applemma} \label{lem:app4}
Let $p \ge 1$, $C > 0$ and $\eps >0$ sufficiently small such that $1 \ge 
(-\ln(C \eps^\frac{2}{3}))^{-2p} \ge \eps$. Let
$$ \int_0^1 \hat{h} \big( (1-\ln(\lbd))^{-2p} \big)\,
   d\|E_\lbd \psi\|^2 \ = \ C \eps^2 . $$
Then there exists $D > 0$ (independent of $\eps$) such that
$$ \int_0^1 (1-\ln(\lbd))^{-2p}\, d\|E_\lbd \psi\|^2 \ \le \ 
   D (-\ln (\eps^\frac{2}{3}))^{-2p} . $$
\end{applemma}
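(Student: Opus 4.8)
The plan is to treat $d\|E_\lbd\psi\|^2$ as a finite positive measure $d\nu$ on $[0,1]$ (of total mass $\nu([0,1]) = \|\psi\|^2$, since $P$ is positive, self-adjoint and non-expansive) and to compare the two integrals through a pointwise identity relating the integrand of the hypothesis to that of the conclusion. Writing $g(\lbd) := (1-\ln\lbd)^{-2p} \in (0,1]$, the crucial observation is that $g(\lbd)^{-1/(2p)} = 1 - \ln\lbd$, so that $\hat{h}(g(\lbd)) = g(\lbd)\,\exp(-2(1-\ln\lbd)) = g(\lbd)\,\exp(-2\,g(\lbd)^{-1/(2p)})$. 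Hence $g = \hat{h}(g)\,\exp(2\,g^{-1/(2p)})$ pointwise, and the entire difficulty is that the conversion factor $\exp(2\,g^{-1/(2p)})$ blows up exactly where $g$ is small, i.e. where $\lbd$ is close to $0$.

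First I would fix the threshold $t_0 := (-\ln(C\eps^{2/3}))^{-2p}$ and split $[0,1]$ into the two spectral sets $A := \{\lbd : g(\lbd) \ge t_0\}$ and $B := \{\lbd : g(\lbd) < t_0\}$. The hypothesis $1 \ge (-\ln(C\eps^{2/3}))^{-2p}$ guarantees $t_0 \le 1$, so the split is meaningful (otherwise $A$ would be empty and the bound below trivial). Since $t\mapsto t^{-1/(2p)}$ is decreasing, on $A$ one has $g^{-1/(2p)} \le t_0^{-1/(2p)}$, whence $\exp(2\,g^{-1/(2p)}) \le \exp(2\,t_0^{-1/(2p)}) = \exp(-2\ln(C\eps^{2/3})) = (C\eps^{2/3})^{-2}$; combined with the hypothesis $\int_0^1 \hat{h}(g)\,d\nu = C\eps^2$ this yields $\int_A g\,d\nu \le (C\eps^{2/3})^{-2}\int_A \hat{h}(g)\,d\nu \le C^{-1}\eps^{2/3}$. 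On $B$ the integrand itself is small, so $\int_B g\,d\nu \le t_0\,\nu(B) \le \|\psi\|^2\,(-\ln(C\eps^{2/3}))^{-2p}$.

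Adding the two contributions gives $\int_0^1 g\,d\nu \le C^{-1}\eps^{2/3} + \|\psi\|^2\,(-\ln(C\eps^{2/3}))^{-2p}$, and it then remains to absorb both terms into $D\,(-\ln\eps^{2/3})^{-2p}$. The second term is already of the target order, because $-\ln(C\eps^{2/3})/(-\ln\eps^{2/3}) \to 1$ as $\eps\downarrow 0$, so that $(-\ln(C\eps^{2/3}))^{-2p} \le \tilde{c}\,(-\ln\eps^{2/3})^{-2p}$ for small $\eps$. The first term is negligible because a positive power of $\eps$ beats any power of the logarithm, i.e. $\eps^{2/3}(-\ln\eps^{2/3})^{2p}\to 0$, so $\eps^{2/3} \le (-\ln\eps^{2/3})^{-2p}$ for $\eps$ small; this is exactly the place where the smallness of $\eps$ encoded in the hypotheses is used. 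Collecting the constants produces the asserted $D$, which depends on $p$, $C$ and $\|\psi\|$ but not on $\eps$.

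The step I expect to be the main obstacle is the choice of the threshold $t_0$: it has to be tuned so that the exponential amplification factor on $A$ is cancelled precisely by the $\eps^2$ of the hypothesis (this is what forces the exponent $2/3$ and the constant $C$ inside the logarithm), while simultaneously keeping the small-$g$ remainder $t_0\|\psi\|^2$ of the desired logarithmic order. Verifying that this single choice makes both regions fall below the target rate is the heart of the argument; the remaining ingredients — the monotonicity of $t\mapsto t^{-1/(2p)}$ and the elementary comparison of powers of $\eps$ against powers of $\ln\eps$ — are routine.
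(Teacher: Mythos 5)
Your proof is correct, but it takes a genuinely different route from the paper's own argument. The paper proves Lemma~\ref{lem:app4} by convexity: it invokes Jensen's inequality through Lemma~\ref{lem:app3} to pull $\hat{h}$ inside the spectral integral, obtaining $\hat{h}\bigl(\int_0^1(1-\ln\lbd)^{-2p}\,d\|E_\lbd\psi\|^2\bigr)\le C\eps^2$, then notes that $s_2:=(-\ln(C\eps^{2/3}))^{-2p}$ satisfies $\hat{h}(s_2)\ge C\eps^2$ and inverts the monotone function $\hat{h}$ to conclude that the integral is at most $s_2$. You never use convexity or Lemma~\ref{lem:app3}: you split the spectral measure at the same threshold $t_0=s_2$, use the pointwise identity $g=\hat{h}(g)\exp(2g^{-1/(2p)})$ on the set where $g\ge t_0$ (there the amplification factor is at most $(C\eps^{2/3})^{-2}$, which is exactly cancelled by the hypothesis $\int_0^1\hat{h}(g)\,d\|E_\lbd\psi\|^2=C\eps^2$), and bound the complementary piece by $t_0$ times the total mass $\|\psi\|^2$. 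The trade-off is this: the paper's argument is shorter and its constant $D$ nominally does not involve $\|\psi\|$ --- although Jensen's inequality requires a probability measure, so the unnormalized application to $d\|E_\lbd\psi\|^2$ silently suppresses the factor $\|\psi\|^2$ that you track explicitly --- whereas your $D$ visibly contains $\|\psi\|^2$. That dependence is harmless in the one place the lemma is used: in the theorem of Section~\ref{ssec:source} it is applied to $\psi_{k_\eps}=(I-P)^{k_\eps-1}\psi$, which varies with $\eps$ but satisfies $\|\psi_{k_\eps}\|\le\|\psi\|$ uniformly, so your constant stays bounded; you should, however, state this, since otherwise ``$D$ independent of $\eps$'' would be unjustified in that application. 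Finally, both you and the paper need smallness of $\eps$ slightly beyond the literal hypothesis $1\ge(-\ln(C\eps^{2/3}))^{-2p}\ge\eps$: you use $\eps^{2/3}\le(-\ln\eps^{2/3})^{-2p}$ and the comparability of $-\ln(C\eps^{2/3})$ with $-\ln\eps^{2/3}$, while the paper's claim $\hat{h}(s_2)\ge C\eps^2$ amounts to $(-\ln(C\eps^{2/3}))^{-2p}\ge C^{-1}\eps^{2/3}$; all of these hold for sufficiently small $\eps$ because logarithmic decay beats any power of $\eps$, so this is a shared imprecision of the stated hypothesis, not a gap in your argument.
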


\begin{proof}
Let $s_2 = ( -\ln(C \eps^\frac{2}{3}) )^{-2p}$. Then
$$ \hat{h} (s_2) \ = \ \big(C \eps^\frac{2}{3})^2
   \big( -\ln(C \eps^\frac{2}{3}) \big)^{-2p}  \ \ge \ C \eps^2 . $$
Thus, it follows from the monotonicity of $\hat{h}$ that the equation 
$\hat{h}(s) = C \eps^2$ has a solution $s_1 \in (0,s_2]$. From the assumptions 
and Lemma~\ref{lem:app3} follows
$$ \hat{h} \Big( \int_0^1 (1-\ln(\lbd))^{-2p}\, d\|E_\lbd \psi\|^2
   \Big) \ \le \ C \eps^2 $$
and we conclude from the monotonicity of $\hat{h}$ that
$$ \int_0^1 \hat{h} \big( (1-\ln(\lbd))^{-2p} \big)\,
   d\|E_\lbd \psi\|^2 \ \le \ s_1 \ \le \ s_2 \ \le \ 
   D (-\ln (\eps^\frac{2}{3}))^{-2p} $$
for a generic constant $D$.
\end{proof}

\begin{applemma} \label{lem:app5}
Let $\hat{k}$ be a solution of
$$ k\, (\ln\, k)^{p} \ = \ C\, \eps^{-1}\, . $$
Then $\hat{k}$ satisfies
$$ \hat{k} \ = \ O \big( \eps^{-1}\, (-\ln\sqrt{\eps})^{-p} \big) . $$
\end{applemma}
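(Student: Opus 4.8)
The plan is to extract the leading-order asymptotics of $\hat k$ from the defining relation by taking logarithms, and then to feed the resulting estimate for $\ln\hat k$ back into that relation. First I would note that the map $k \mapsto k(\ln k)^p$ is strictly increasing and unbounded for $k$ large, so that for every sufficiently small $\eps > 0$ the equation $k(\ln k)^p = C\eps^{-1}$ has a unique solution $\hat k$, and moreover $\hat k \to \infty$ as $\eps \downarrow 0$.

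Next I would take logarithms of $\hat k\,(\ln\hat k)^p = C\eps^{-1}$, obtaining
$$ \ln\hat k \, + \, p\,\ln\ln\hat k \ = \ \ln C - \ln\eps . $$
Since $\ln\ln\hat k = o(\ln\hat k)$ as $\hat k \to \infty$, there is a threshold beyond which $p\,\ln\ln\hat k \le \ln\hat k$, so that $\ln\hat k + p\,\ln\ln\hat k \le 2\ln\hat k$. Combined with the elementary fact that $\ln C - \ln\eps \ge \tfrac{1}{2}(-\ln\eps)$ for $\eps$ small enough (the term $-\ln\eps$ eventually dominates the constant $\ln C$), this yields the lower bound $\ln\hat k \ge \tfrac{1}{4}(-\ln\eps)$ for all sufficiently small $\eps$.

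With this lower bound in hand, I would rewrite the defining relation as $\hat k = C\eps^{-1}(\ln\hat k)^{-p}$ and substitute, which gives
$$ \hat k \ = \ C\eps^{-1}(\ln\hat k)^{-p} \ \le \ C\, 4^p\, \eps^{-1}\, (-\ln\eps)^{-p} . $$
Finally, using $-\ln\eps = 2(-\ln\sqrt\eps)$, hence $(-\ln\eps)^{-p} = 2^{-p}(-\ln\sqrt\eps)^{-p}$, the right-hand side becomes $C\,2^p\,\eps^{-1}(-\ln\sqrt\eps)^{-p}$, which is exactly the claimed $O\big(\eps^{-1}(-\ln\sqrt\eps)^{-p}\big)$.

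The main obstacle is the middle step: controlling the parasitic $\ln\ln\hat k$ term so as to produce a genuine lower bound on $\ln\hat k$ of the correct order, while simultaneously absorbing the constant $\ln C$ uniformly for small $\eps$. Once the equivalence $\ln\hat k \asymp -\ln\eps$ is secured, the back-substitution and the passage from $-\ln\eps$ to $-\ln\sqrt\eps$ are purely routine and affect only the implied constant.
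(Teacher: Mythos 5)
Your proof is correct, and it takes a genuinely different route from the paper: the paper's entire proof of this lemma is the one-line citation ``Follows immediately from \cite[Lemma~A.6]{DES}'', whereas you give a self-contained elementary argument. Your chain of estimates checks out: strict monotonicity of $k \mapsto k(\ln k)^p$ on $(1,\infty)$ gives uniqueness of $\hat{k}$ and $\hat{k}\to\infty$ as $\eps\downarrow 0$; taking logarithms and using $\ln\ln\hat{k} = o(\ln\hat{k})$ to absorb the parasitic term (and $-\ln\eps\to\infty$ to absorb $\ln C$) yields $\ln\hat{k} \ge \frac{1}{4}(-\ln\eps)$ for small $\eps$; back-substitution into $\hat{k} = C\eps^{-1}(\ln\hat{k})^{-p}$ --- a step that silently uses $p>0$, so that $t\mapsto t^{-p}$ is decreasing, which is harmless since the paper applies the lemma with $p\ge 1$ --- gives $\hat{k} \le C\,4^p\,\eps^{-1}(-\ln\eps)^{-p}$, and the identity $-\ln\eps = 2(-\ln\sqrt{\eps})$ converts this to the stated bound with explicit constant $C\,2^p$. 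What your approach buys: the reader need not chase the external reference and match its hypotheses, the condition $p>0$ is made visible, and the dependence of the $O$-constant (on $C$ and $p$ only) is explicit, which is relevant where the lemma is invoked to bound the stopping index $k_\eps$ in the noisy-data rate theorem. What the paper's citation buys is brevity and uniformity: the appendix consistently outsources this kind of logarithmic calculus to \cite{DES} (Lemmas A.1, A.3 and A.5 all do so), keeping the technical bookkeeping in one place at the cost of opacity.
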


\begin{proof}
Follows immediately from \cite[Lemma~A.6]{DES}
\end{proof}
%
%
%

\end{document}